\numberwithin{equation}{section}
\newtheorem{theorem}{Theorem}[section]
\newtheorem{lemma}[theorem]{Lemma}
\newtheorem{proposition}[theorem]{Proposition}
\newtheorem{corollary}[theorem]{Corollary}
\theoremstyle{definition}
\newtheorem{definition}[theorem]{Definition}
\theoremstyle{remark}
\newtheorem{remark}[theorem]{Remark}
\begin{document}

\title{An exploration of connections and curvature in the presence of singularities}

\author[H.-C.~Herbig]{Hans-Christian Herbig\,\orcidlink{0000-0003-2676-3340}}
\address{Departamento de Matem\'{a}tica Aplicada, Universidade Federal do Rio de Janeiro,
Av. Athos da Silveira Ramos 149, Centro de Tecnologia - Bloco C, CEP: 21941-909 - Rio de Janeiro, Brazil}
\email{herbighc@gmail.com}

\author[W. Osnayder Clavijo E.]{
William Osnayder Clavijo Esquivel\,\orcidlink{0009-0000-4211-3300
}}
\address{Departamento de Matem\'{a}tica Aplicada, Universidade Federal do Rio de Janeiro,
Av. Athos da Silveira Ramos 149, Centro de Tecnologia - Bloco C, CEP: 21941-909 - Rio de Janeiro, Brazil}
\email{woclavijo@im.ufrj.br}

\keywords{Singular varieties, connections and curvature, gauge transformations, Poisson algebras, Chern character, invariant theory.}
\subjclass[2020]{primary 53C05,	secondary 13C05, 13A50, 17B63, 13D02}

\begin{abstract} 
We develop the notions of connections and curvature for general Lie-Rinehart algebras without using smoothness assumptions on the base space. We present situations when a connection exists. E.g., this is the case when the underlying module is finitely generated. We show how the group of module automorphism acts as gauge transformations on the space of connections. When the underlying module is projective we define a version of the Chern character reproducing results of Hideki Ozeki. We discuss various examples of flat connections and the associated Maurer-Cartan equations. We provide examples of Levi-Civita connections on singular varieties and singular differential spaces with non-zero Riemannian curvature. The main observation is that for quotient singularities, even though the metric degenerates along strata, the poles of the Christoffel symbols are removable.
\end{abstract}

\maketitle

\tableofcontents
\section{Introduction}\label{sec:intro}

In the past we discovered flat connections on singular varieties that arise naturally in Poisson geometry \cite{higherKoszul,Nambuffel} and generalizations thereof \cite{Nambuffel} 
for Nambu-Poisson geometry (see Section \ref{sec:flatex}). The question of the existence of Levi-Civita connections on singular spaces also arises when one attempts to extend Boris Dubrovin's construction \cite{Dubrovin} of Frobenius manifolds from the principal stratum of the orbit space of a Coxeter group to the whole orbit space. It turns out that the usual theory of differential geometry can be developed without smoothness assumptions on the base space and without
supposing that the module where the connection resides is projective. However, it is not clear a priori whether the set of connections is nonempty. To our knowledge the corresponding obstructions have still to be worked out.

In this paper we present a basic theory of connections and curvature without putting any assumptions. The framework is based on the notion of a Lie-Rinehart algebra $ ( L,A)$ and on what we call its naive de Rham complex (see \cite{Rinehart}). This idea is actually quite old: in \cite{Ozeki} Hideki Ozeki presented such an approach with the objective to construct a Chern character for the case when the $ A$-module $ V$ where the connection lives is a finitely generated projective $ A$-module. We are able to effortlessly reproduce his results (see Section \ref{sec:Chern}). Due to our ignorance we do not know if something similar can be done when the projectivity assumption on $ V$ is dropped. The problem to start with is to find regular functions on $ \operatorname{End}_{A}( V)$ that are invariant under conjugation with elements of $ \operatorname{Aut}_{A}( V)$ and which could potentially replace the trace polynomials. Even if $ V$ is finitely generated of projective dimension $ 1$ we do not know how to construct such $ \operatorname{Aut}_{A}( V)$-invariant regular functions. The group of $A$-module automorphisms $ \operatorname{Aut}_{A}( V)$ is of central  importance since it can be understood as the group of gauge transformations acting on the space $ \operatorname{Conn}_{L}( V)$ of $ L$-connections on $ V$. Our gauge transformations generalize the constructions that are well-known for the case when $A$ is smooth and $L$ and $V$ are finitely generated projective $A$-modules. The realization that a gauge action of $ \operatorname{Aut}_{A}( V)$ on 
$ \operatorname{Conn}_{L}( V)$ can be defined without any assumption on $A,\ L$ or $ V$ came as a surprise to us (see Section \ref{sec:gauge}).

It is actually not difficult to find examples of flat connections on singular spaces (see Section
\ref{sec:flatex}). However, in view of the usual theory of Chern classes it seems desirable to find examples of non-flat connections. At the moment we are not able to present non-flat examples of $A$-modules $V$ that do not admit flat connections which go beyond the well-known theory for the smooth case. The task to find non-flat examples is only straight forward in the case when $ V$ is a finitely generated projective module (see Section \ref{subsec:Fedosov}). The attempts to construct Levi-Civita connections on singular varieties using the embedding metric run into problems. It appears to be necessary to localize along the singularities (see Section \ref{sec:LC}). However, when the singularities in question stem from taking categorical quotients of reductive (or compact) symmetry groups then there is hope, since the problems are caused by singularities that occur just by declaring points in the same (closed) orbits to be equivalent. As equivalence relations are in a sense just some form of mental gymnastics one should expect that the technical problems can be overcome. In fact, we are able to give examples of non-flat Levi-Civita connections on such categorical quotients and note that those can be found quite generally using the construction principles employed here (see Section \ref{sec:LC}). We emphasize that Riemannian metrics on orbit spaces of compact groups, viewed as stratified spaces, have been studied before \cite{Michor}. To our knowledge a notion of a Levi-Civita connection has not yet been elaborated in this approach.
We present the Levi-Civita connection on a simple example of an orbit space, namely the quotient of the real Coxeter group $A_2$
(see Section \ref{sec:Dubrovin}).

As already said, we use the language of Lie-Rinehart algebras, their modules and their cohomologies as the basic tool to talk about connections \cite{Rinehart}. This has, among other things, the advantage that the approach is flexible. We are not obliged to stay within the framework of complex or real algebraic geometry. A framework for singular spaces that is more adequate for the needs of physics is provided by differential spaces (see \cite{Navarro, Sniatycki,FHS}). It is also possible to work in an analytic or subanalytic setup. If the underlying $A$-module $L$ of a Lie-Rinehart algebra $(L,A)$ is not projective there are technical challenges that require attention. For many natural examples of $L$, such as the module of derivations of a singular algebra $A$, the $A$-module $L$ is not reflexive. In this case the cohomology of a Lie-Rinehart algebra, which we refer in this paper as the naive de Rham cohomology, cannot be determined by deriving a Hom functor. In the framework of complex algebraic geometry the naive de Rham complex is known as the de Rham complex of reflexive forms and can be calculated using resolutions of singularities (see, e.g., \cite{Greb}). We do not know of any quantitative results for differential spaces (see, however, \cite{Watts}). Recently, the authors used the naive de Rham complex to construct symplectic forms on singular Poisson algebras \cite{HOS}.

When $A$ is a noetherian $\boldsymbol{k}$-algebra one says that it is smooth if and only if its $A$-module of Kähler differentials $\Omega_{A|\boldsymbol{k}}$ is projective. This is a succinct and useful terminology since it coincides with what intuitively meant by a singularity. We do not know of any non-smooth noetherian $\boldsymbol{k}$-algebra with reflexive module of Kähler differentials  $\Omega_{A|\boldsymbol{k}}$. We are not aware, however, of general results that claim that such examples do not exist. On the other hand, if $A$ is the algebra of smooth functions on  a differential space there are no homological smoothness criteria in terms of the module of smooth differential forms (see \cite{Navarro}). The assessment whether the algebra of smooth functions on a manifold with boundary is to be considered smooth appears to be a matter of taste. We adopt the point of view that the algebra $A$ of functions on a differential space is to be considered \emph{smooth} if and only if the differential space is diffeomorphic to a smooth manifold (i.e., manifolds with boundaries are not considered to be smooth here) and otherwise \emph{singular}. In Section \ref{sec:Dubrovin} we study an example of a Levi-Civita connection on an orbit space of 
coregular real group action. Via the Hilbert embedding such spaces are viewed as semialgebraic sets that are merely defined in terms of polynomial inequalities. It follows that in this situation the $A$-module of smooth forms (cf. \cite{Navarro}) is free and hence reflexive.

We confess that we feel that there is no demand for interdisciplinary (i.e., bilingual) papers like ours. We found it almost impossible to tag the article properly with a mathematical subject classification.
A differential geometer or physicist may criticise that we are mostly concerned with primordial slime that can be found in text books. For a generic commutative algebraist the geometric motivations that guide our search are unappealing. We emphasize that the mathematization of gauge theory\footnote{This is a mayor endeavour of theoretical physics that started about hundred years ago, when people became interested in the geometry underlying Maxwell's theory of electromagnetism.} faces two serious challenges: infinite dimensionality and quotient singularities arising from dividing out the action of the gauge group. For this reason we feel it to be necessary to extend the geometric formalism to the realm of quotient singularities using a clear, conceptual language that is not burdened with unnecessary technicalities. The use of upper indices appears to be revolting to the algebraist, but the geometer's brain, wired in Ricci calculus, cannot work cleanly without it. Also the inconvenience that in our work many words (e.g., flatness, torsion-freeness etc.) have a double meaning is stemming from the fact that algebraists and differential geometers usually mind their own respective businesses. We hope that despite of this our paper is readable to either group of people. Since what is clear to one part of the readership may be obscure to the other we present a lot of details. Note, however, that Sections \ref{sec:LC} and \ref{sec:Dubrovin} require familiarity with polynomial and smooth invariant theory (see \cite{Bierstone, liftingHomo, HOS}).

Let us give a brief outline of the paper. In Section \ref{sec:LR} we recall the notion of a Lie-Rinehart algebra and its naive de Rham complex. In Section \ref{sec:Lconn} we define what we mean by an $L$-connection and discuss basic properties and examples. In Section \ref{sec:curv} we define curvature and torsion and exhibit a canonical flat connection on an endomorphism module. In Section \ref{sec:gauge} we define the action of the gauge group $\operatorname{Aut}_A(V)$ on the set $\operatorname{Conn}_L(V)$ of $L$-connections on the $A$-module $V$. In Section \ref{sec:Chern} we construct a Chern character for the special case when $V$ is a projective module. In Section \ref{sec:flatex} we present simple examples of flat $L$-connections on singular varieties and in Section \ref{sec:LRmod} we explain the equivalence of the notion of a flat $L$-connection on an $A$-module $V$ and a Lie-Rinehart module structure on $V$. In Section \ref{sec:Fed} we recall a simple construction of connections on projective modules. In Section \ref{sec:LC}
we present Levi-Civita connections and Riemannian curvature on the double cone, while in Section \ref{sec:Dubrovin} we do the same
for the orbit space of the Coxeter group $A_2$, seen as a differential space in the sense of Sikorski. Finally, in Section \ref{sec:outlook} we indicate how the topics touched upon in our work can be further developed.

\vspace{2mm}
\noindent\textit{Acknowledgements.} This work grew out of earlier stages of a joint project with Christopher Seaton and Daniel Herden on higher Poisson cohomology. Our interest in Levi-Civita connections on categorical quotients and orbit spaces arose in discussions with Yassir Dinar on Boris Dubrovin's constructions of Frobenius manifolds. W.O.C.E. thanks Leandro Gustavo Gomes for introducing him to computer assisted curvature calculations and acknowledges financial support of CAPES. Decades ago HCH watched Martin Bordemann filling seemingly unlimited amounts of paper with calculations of Atiyah-Molino classes and coffee stains, but does not recall a punch line.

 \section{Lie-Rinehart algebras and the naive de Rham complex}\label{sec:LR}

 In this section we fix basic notions and conventions.
 Throughout this paper $\boldsymbol{k}$ denotes a field of characteristic zero. If not otherwise stated by a $\boldsymbol{k}$-algebra we mean a commutative associative $\boldsymbol{k}$-algebra with unit. 
If $A$ is a $\boldsymbol{k}$-algebra we denote by $\operatorname{Der}( A)$ the $A$-module of $\boldsymbol{k}$-linear derivations $X:A\rightarrow A$. 
It is a $\boldsymbol{k}$-Lie algebra with bracket $[ \ ,\ ] :\operatorname{Der}( A) \times \operatorname{Der}( A)\rightarrow \operatorname{Der}( A) ,\ ( X,Y) \mapsto X\circ Y-Y\circ X$.
More specifically, the $A$-module of derivations is an example of a \textit{Lie-Rinehart algebra}, 
a structure introduced by Richard Palais \cite{Palais} in 1961 under the name $d$-Lie ring and studied more systematically by George Stewart Rinehart \cite{Rinehart}. By a Lie-Rinehart algebra we mean a pair $( L,A)$ where $A$ is a $\boldsymbol{k}$-algebra, $L$ is an $A$-module that is supplied with a $\boldsymbol{k}$-bilinear Lie bracket $[ \ ,\ ] :L\times L\rightarrow L$ such that the following conditions hold:
\begin{enumerate}
\item  $L$ acts on $A$ by derivations, i.e., for $(X, a)\in L\times A$ is mapped to $X( a) \in A$ and $X( ab) =aX( b) +bX( a)$ \ for $a,b\in A$.
\item For all $X\in L,\ a,b\in A$ the relation $a( X( b)) =( aX)( b)$ is satisfied.
\item For all $X,Y\in L,\ a\in A$ there is the relation $[ X,aY] =X(a)Y +a[X,Y]$.
\end{enumerate}

As already said, for each $\boldsymbol{k}$-algebra $A$ there is the example of the so-called \textit{tangent Lie-Rinehart algebra} $\left(\operatorname{Der}( A) ,A\right)$. It is implicit in the definition that there is a canonical map $\alpha :L\rightarrow \operatorname{Der}( A)$, referred to as the \textit{anchor}, attached to each Lie-Rinehart algebra $( L,A)$. The anchor is an example of a morphism of Lie-Rinehart algebras. In fact, by a \textit{morphism between Lie-Rinehart algebras} $( L,A)$ and $( L',A)$ we mean a morphism of $A$-modules $L\rightarrow L'$ that is compatible with brackets and anchors. More generally, let $ \phi :A\rightarrow A'$ be a surjective $ \boldsymbol{k}$-algebra morphism.
Then a morphism of Lie-Rinehart algebras $ ( L,A)$ and $ ( L',A')$ is by definition a morphism $ \Phi :L\rightarrow L'$ of $ A$-modules compatible with the brackets such that $ \Phi ( X)( a') =\phi ( X( a))$ \ for all $ X\in L,\ a'\in A'\ $ independent of the choice of $ a\in \phi ^{-1}( a')$. Here $L'$ is understood to be an $A$-module via $\phi$.

The cohomology of a Lie-Rinehart algebra has been defined by Richard Palais \cite{Palais}. George S. Rinehart \cite{Rinehart} showed that in the case when $L$ is a projective $A$-module it can be defined by deriving a Hom functor. In this paper, we do not impose restrictions on the $A$-module $L$ we refer to this complex as the naive de Rham complex. The space of naive de Rham $m$-cochains, $m\geq 0$, of the Lie-Rinehart algebra $( L,A)$ is defined to be the $A$-module $\operatorname{Alt}_{A}^{m}( L,A)$ of alternating $A$-multilinear forms of arity $m$ from $L$ to $A$. The naive de Rham differential $d_{\operatorname{dR}} :\operatorname{Alt}_{A}^{m}( L,A)\rightarrow \operatorname{Alt}_{A}^{m+1}( L,A)$ is given by the well-known formula
\begin{align*}
 & (d_{\operatorname{dR}} \omega ) (X_{0} ,X_{1} ,\dotsc ,X_{m} )\\
 & =\sum _{l=0}^{m} (-1)^{l} X_{l} \left(\omega ( X_{0} ,\dotsc ,\widehat{X_{l}} ,\dotsc ,X_{m})\right)\\
 & \ \ \ \ \ \ \ \ \ \ \ \ \ \ \ \ \ \ \ \ \ \ \ \ \ \ \ \ +\sum _{0\leq k< l\leq m} (-1)^{k+l} \omega \left( [X_{k} ,X_{l} ],X_{0} ,\dotsc ,\widehat{X_{k}} ,\dotsc ,\widehat{X_{l}} ,\dotsc ,X_{m}\right) ,
\end{align*}
which is attributed to Jean-Louis Koszul. Here we assume $X_{0} ,\dotsc ,X_{m} \in L$ and the hats indicate omission of the corresponding term. There is a supercommutative product
$\cup :\operatorname{Alt}_{A}^{k}( L,A) \times \operatorname{Alt}_{A}^{m}( L,A)\rightarrow \operatorname{Alt}_{A}^{k+m}( L,A)$ defined by
$$\omega \cup \omega '( X_{1} ,\dotsc ,X_{k+m} )=\sum _{\sigma \in\operatorname{Sh}_{k,m}}( -1)^{|\sigma |} \omega ( X_{\sigma ( 1)} ,\dotsc ,X_{\sigma ( k)} )\omega ( X_{\sigma ( k+1)} ,\dotsc ,X_{\sigma ( k+m)} )$$
for $X_{1} ,\dotsc ,X_{k+m} \in L$.
Here $\operatorname{Sh}_{k,m}$ denotes the set of shuffle permutations of the set $\{1,\dots,k\}\sqcup\{k+1,\dots,m\}$.
Palais \cite{Palais} showed that $\left(\operatorname{Alt}_{A}^{m}( L,A) ,d_{\operatorname{dR}} ,\cup \right)$ forms a supercommutative dg algebra over $\boldsymbol{k}$. For the cohomology algebra we write $(\mathrm{H}_{\operatorname{dR}}( L,A) ,\cup )$. For the cohomology of Lie-Rinehart modules, which also goes back to \cite{Palais}, see Section \ref{sec:LRmod}.

\section{$L$-connection on an $A$-module $V$}\label{sec:Lconn}

Let $( L,A)$ be a Lie-Rinehart algebra over $\boldsymbol{k}$. By an $L$-connection on the $A$-module $V$ we mean an $A$-linear map $\nabla :V\rightarrow \operatorname{Hom}_{A}( L,A) \otimes _{\boldsymbol{k}} V$ such that $\nabla ( av) =d_{\operatorname{dR}} a\otimes v+a\nabla v$. Given a connection $\nabla $ and a $X\in L$ there is the evaluation map
$$\nabla _{X} :V\xrightarrow{\nabla }\operatorname{Hom}_{A}( L,A) \otimes _{\boldsymbol{k}} V\xrightarrow{\omega \otimes v\mapsto \omega ( X) v} V.$$ 
It is clear that $\nabla _{X} v=X( a) \otimes v+a\nabla _{X} v$ for all $X\in L,\ v\in V,\ a\in A$. Conversely, an $L$-connection is nothing but an $A$-linear assingment $X\mapsto \nabla _{X}$ satisfying this identity.

If $\nabla $ and $\nabla '$ are two $L$-connections on the $A$-module $V$ and $X\in L$ then $\nabla _{X} -\nabla '_{X} \in \operatorname{Hom}_{A}( L,A) \otimes _{A} V$ \ and the map $X\mapsto \nabla _{X} -\nabla '_{X}$ is $A$-linear.
That is, the difference $\nabla -\nabla ' \in \operatorname{Hom}_{A}( L,A) \otimes _{A}\operatorname{End}_{A} (V)$ can be interpreted as an $\operatorname{End}_{A} (V)$-valued naive one-form. By an \textit{affine }$A$\textit{-module modeled on the }$A$\textit{-module }$U$ we mean a set $\mathcal{C}$ together with a surjective map $+:\mathcal{C} \times U\rightarrow \mathcal{C} ,\ ( \nabla ,u) \mapsto \nabla +u$ such that $(\nabla+u)+u'=\nabla +(u+u')$ for all $\nabla\in \mathcal{C}$ and $u,u'\in U$. 
With this we can say that the space $\operatorname{Conn}_L  (V)$ of $L$-connections on the $A$-module $V$ is an affine 
$A$-module modeled on the $A$-module $\operatorname{Hom}_{A}( L,A) \otimes _{A}\operatorname{End}_{A} V$. 
A priori it is not clear if $\operatorname{Conn}_L(V) \neq \emptyset $. 
However, if $V$ is projective one can use the construction of Section \ref{subsec:Fedosov} to show that $\operatorname{Conn}_L(V) \neq \emptyset$. Moreover, If $V$ is a module of $A$-endomorphisms of a coherent $A$-module one can use Theorem \ref{thm:canflat} below to show $\operatorname{Conn}_L(V) \neq \emptyset$.
We will discuss other non-projective examples in Sections \ref{sec:flatex} and \ref{sec:LC}.

Let $\alpha ^{\lor } :\Omega _{A|\boldsymbol{k}}\rightarrow \operatorname{Hom}( L,A)$ be the dual to the anchor map $\alpha :L\rightarrow \operatorname{Der}( A)$.
There is an obvious $L$-connection on the $A$-module $A$: 
$$a\mapsto \alpha ^{\lor }(\mathrm{d} a) \otimes 1\in \operatorname{Hom}( L,A) \otimes _{\boldsymbol{k}} A.$$
Two $L$-connections $\nabla ^{U} ,\ \nabla ^{V}$ on the $A$-modules $U$ and $V$, respectively, induce an $L$-connection on $\operatorname{Hom}_{A}( U,V)$ by
 $$\phi \mapsto ( \nabla _{X} \phi )( u) :=\nabla _{X}^{V}( \phi ( u)) -\phi \left( \nabla _{X}^{U} u\right)$$ 
and on $U\otimes V$ by
 $$u\otimes v\mapsto \nabla _{X}( u\otimes v) :=\nabla _{X}^{U} u \otimes v-u\otimes \nabla _{X}^{V} v$$
for $u\in U,\ v\in V,\ \phi \in \operatorname{Hom}_{A}( U,V) ,\ X\in L$. In particular we get $L$-connections on the $A$-modules $V^{\lor } :=\operatorname{Hom}_{A}( V,A)$ and $\operatorname{End}_{A} V$ which are also denoted by $\nabla$.

If $U,V,W$ are $A$-modules with $L$-connections $\nabla^{U} ,\nabla ^{V}$ and $\nabla ^{W}$, respectively, we have
\begin{align}\label{eq:multconn}
\nabla_X ( \phi \psi ) =(\nabla_X \phi ) \psi +\phi ( \nabla_X \psi )
\end{align}
for all $\phi \in \operatorname{Hom}_{A}( U,V) ,\ \psi \in \operatorname{Hom}_{A}( V,W), X\in L$. This is because 
\begin{align*}
( \nabla _{X}( \phi \psi )) u=\nabla _{X}^{W}( \phi \psi u) -\phi \psi \nabla _{X}^{U} u=\nabla _{X}^{W}( \phi \psi u) -\phi \nabla _{X}^{V}( \psi u) +\phi \nabla _{X}^{V}( \psi u) -\phi \psi \nabla _{X}^{U} u
=\nabla _{X}( \phi ) \psi u+\phi ( \nabla _{X} \psi ) u
\end{align*}
holds for all $X\in L, \ u\in U$. A connection $\nabla$ on $\operatorname{End}_A(V)$ satisfying the Leibniz rule Equation \eqref{eq:multconn} for composition will be referred to as \emph{multiplicative}.

\section{Curvature of an $L$-connection}\label{sec:curv}

Let $\nabla $ be an $L$-connection on the $A$-module $V$. By the \textit{curvature} of the $L$-connection $\nabla $
we mean the $\boldsymbol{k}$-trilinear map
$$( X,Y,v) \mapsto ([ \nabla _{X} ,\nabla _{Y}] -\nabla _{[ X,Y]}) v.$$
It is $A$-bilinear and antisymmetric in the first two arguments.

\begin{proposition}
For all $a\in A,\ X,Y\in L$ we have $([ \nabla _{X} ,\nabla _{Y}] -\nabla _{[ X,Y]})( av) =a([ \nabla _{X} ,\nabla _{Y}] -\nabla _{[ X,Y]}) v$, so that
$( X,Y) \mapsto \mathcal{R}( X,Y) :=[ \nabla _{X} ,\nabla _{Y}] -\nabla _{[ X,Y]}$ 
can be interpreted as a naive $\operatorname{End}_{A} (V)$-valued two-form $\mathcal{R} \in \operatorname{Alt}_{A}^{2}\left( L,\operatorname{End}_{A} (V)\right)$, which is referred to as the \emph{curvature endomorphism} of $\nabla $.
\end{proposition}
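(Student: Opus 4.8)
The plan is to verify the $A$-linearity of $\mathcal{R}(X,Y)$ directly from the connection axiom $\nabla_X(av) = X(a)v + a\nabla_X v$, applying it repeatedly and collecting terms. First I would expand $\nabla_X\nabla_Y(av)$ by applying the Leibniz rule twice: $\nabla_Y(av) = Y(a)v + a\nabla_Y v$, and then $\nabla_X$ of this yields $X(Y(a))v + Y(a)\nabla_X v + X(a)\nabla_Y v + a\nabla_X\nabla_Y v$. Symmetrically, $\nabla_Y\nabla_X(av) = Y(X(a))v + X(a)\nabla_Y v + Y(a)\nabla_X v + a\nabla_Y\nabla_X v$. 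Subtracting, the two mixed terms $Y(a)\nabla_X v + X(a)\nabla_Y v$ cancel against their counterparts, leaving
\begin{align*}
[\nabla_X,\nabla_Y](av) = \bigl(X(Y(a)) - Y(X(a))\bigr)v + a[\nabla_X,\nabla_Y]v = [X,Y](a)\,v + a[\nabla_X,\nabla_Y]v,
\end{align*}
where I have used that the anchor sends $[X,Y]$ to the commutator of derivations, i.e.\ $[X,Y](a) = X(Y(a)) - Y(X(a))$. On the other hand, $\nabla_{[X,Y]}(av) = [X,Y](a)\,v + a\nabla_{[X,Y]}v$ directly from the connection axiom applied to the element $[X,Y]\in L$. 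Subtracting these two identities, the $[X,Y](a)\,v$ terms cancel and we are left with $([\nabla_X,\nabla_Y] - \nabla_{[X,Y]})(av) = a([\nabla_X,\nabla_Y] - \nabla_{[X,Y]})v$, which is exactly the claimed $A$-linearity.

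Once $A$-linearity in $v$ is established, $\mathcal{R}(X,Y)$ is a well-defined element of $\operatorname{End}_A(V)$ for each pair $X,Y$. It remains to check that $(X,Y)\mapsto\mathcal{R}(X,Y)$ is $A$-bilinear and alternating, so that it lands in $\operatorname{Alt}_A^2(L,\operatorname{End}_A(V))$; the antisymmetry $\mathcal{R}(X,Y) = -\mathcal{R}(Y,X)$ is immediate from the antisymmetry of the Lie bracket and of the commutator. For $A$-linearity in, say, the second slot, I would compute $\mathcal{R}(X,aY)$ using $\nabla_{aY} = a\nabla_Y$ (which holds because $\nabla$ is $A$-linear in its $L$-argument) together with axiom (3) of a Lie-Rinehart algebra, $[X,aY] = X(a)Y + a[X,Y]$: expanding $[\nabla_X, a\nabla_Y] = \nabla_X a\nabla_Y - a\nabla_Y\nabla_X$ and noting $\nabla_X(a\nabla_Y v) = X(a)\nabla_Y v + a\nabla_X\nabla_Y v$, the term $X(a)\nabla_Y v$ is precisely cancelled by the $X(a)Y$ piece appearing in $\nabla_{[X,aY]} = X(a)\nabla_Y + a\nabla_{[X,Y]}$, leaving $a\mathcal{R}(X,Y)$. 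Linearity in the first slot then follows from antisymmetry.

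There is no real obstacle here; the proposition is a routine bookkeeping computation, and the only point requiring a moment's care is remembering that the anchor is a Lie algebra morphism so that $[X,Y]$ acts on $A$ as the commutator $X\circ Y - Y\circ X$ — this is what makes the second-order terms $X(Y(a))v - Y(X(a))v$ collapse to $[X,Y](a)v$ and then cancel against the corresponding term in $\nabla_{[X,Y]}(av)$. The rest is just tracking which terms pair up and vanish.
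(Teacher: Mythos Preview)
Your proof is correct and complete. The paper does not supply a proof of this proposition, presumably because the computation is standard; your direct expansion via the Leibniz rule is exactly the argument one would expect, and you have also verified the $A$-bilinearity in the $L$-arguments that the paper asserts just before the proposition without justification.
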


The $L$-connection $\nabla$ can be extended to an $L$-\textit{covariant derivative} 
$$\nabla :\operatorname{Alt}_{A}^{m}\left( L,V\right)\rightarrow \operatorname{Alt}_{A}^{m+1}\left( L,V\right)$$
using the following version of the Koszul formula
\begin{align} \label{eq:covder}
( \nabla \omega )( X_{0} ,\dotsc ,X_{m})
&=\sum _{i=0}^{m}( -1)^{i}\nabla _{X_{i}}\left( \omega ( X_{0} ,\dotsc ,\widehat{X_{i}} ,\dotsc ,X_{m})\right)\\
\nonumber & \ \ \ \ \ \ \ +\sum _{0\leq i< j\leq m}( -1)^{i+j} \omega \left([ X_{i} ,X_{j}] ,X_{0} ,\dotsc ,\widehat{X_{i}} ,\dotsc ,\widehat{X_{j}} ,\dotsc ,X_{m}\right)
\end{align}
for $X_{0} ,X_{1} ,\dotsc ,X_{m} \in L$.

\begin{theorem}\label{thm:bianchi}
The space $\operatorname{Alt}_{A}\left( L,V\right)$ is a left module over  $\operatorname{Alt}_{A}\left( L,\operatorname{End}_{A} (V)\right)$ with respect to the product $\cup$.
The covariant derivative $\nabla$ is a derivation this left $\operatorname{Alt}_{A}\left( L,\operatorname{End}_{A} (V)\right)$-module structure.
If $\omega \in \operatorname{Alt}_{A}^{m}\left( L,\operatorname{End}_{A} (V)\right)$ then $\nabla ^{2} \omega =\mathcal{R} \cup \omega $. Moreover, the second Bianchi identity $\nabla\mathcal R=0$ holds.
\end{theorem}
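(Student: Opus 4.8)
The plan is to establish the four assertions of Theorem \ref{thm:bianchi} in sequence, treating each as a formal consequence of the Koszul formula \eqref{eq:covder} together with the multiplicative Leibniz rule \eqref{eq:multconn} and the supercommutativity of $\cup$. First I would verify that $\operatorname{Alt}_A(L,V)$ is a left $\operatorname{Alt}_A(L,\operatorname{End}_A(V))$-module: for $\Phi\in\operatorname{Alt}_A^k(L,\operatorname{End}_A(V))$ and $\omega\in\operatorname{Alt}_A^m(L,V)$ one sets $(\Phi\cup\omega)(X_1,\dots,X_{k+m})=\sum_{\sigma\in\operatorname{Sh}_{k,m}}(-1)^{|\sigma|}\Phi(X_{\sigma(1)},\dots,X_{\sigma(k)})\bigl(\omega(X_{\sigma(k+1)},\dots,X_{\sigma(k+m)})\bigr)$, using the evaluation of an endomorphism on a vector. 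Associativity of this action with respect to the algebra product $\cup$ on $\operatorname{Alt}_A(L,\operatorname{End}_A(V))$ follows exactly as in Palais's proof that $(\operatorname{Alt}_A^\bullet(L,A),\cup)$ is a dg algebra, since the only new ingredient is replacing multiplication in $A$ by composition of endomorphisms followed by evaluation, and composition is associative; the shuffle bookkeeping is identical.

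Next I would prove that $\nabla$ is a degree-one derivation of this module structure, i.e.\ $\nabla(\Phi\cup\omega)=(\nabla\Phi)\cup\omega+(-1)^k\Phi\cup(\nabla\omega)$, where $\nabla\Phi$ is the covariant derivative on $\operatorname{Alt}_A(L,\operatorname{End}_A(V))$ coming from the induced connection on $\operatorname{End}_A(V)$. This is the standard computation showing the Koszul differential is a derivation of a cup product, and the crucial point that makes it go through is precisely \eqref{eq:multconn}: when one differentiates a term $\Phi(\dots)(\omega(\dots))$ by some $\nabla_{X_i}$, the connection on $\operatorname{End}_A(V)$ was defined so that $\nabla_{X_i}\bigl(\phi(v)\bigr)=(\nabla_{X_i}\phi)(v)+\phi(\nabla_{X_i}v)$, which is exactly the Leibniz rule needed to split the sum into the two desired pieces. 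The signs from the shuffles and from the positions of the $\widehat{X_i}$ combine in the usual way; I would organize this by writing out $(\nabla(\Phi\cup\omega))(X_0,\dots,X_{k+m})$, applying \eqref{eq:covder}, and matching terms with the expansion of the right-hand side, being careful with the bracket terms $[X_i,X_j]$ which land consistently because $\cup$ is built from shuffles.

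With $\nabla$ a derivation, the identity $\nabla^2\omega=\mathcal R\cup\omega$ for $\omega\in\operatorname{Alt}_A^m(L,\operatorname{End}_A(V))$ follows by a direct computation of $\nabla^2$ on a general form, where the double application of \eqref{eq:covder} produces, after cancellation using the Jacobi identity in $L$ and the Lie-Rinehart axioms, only the curvature terms $([\nabla_{X_i},\nabla_{X_j}]-\nabla_{[X_i,X_j]})$ acting on $\omega(\dots)$; one recognizes this as $\mathcal R\cup\omega$ with $\mathcal R$ the curvature two-form of the connection on $\operatorname{End}_A(V)$. (It should be noted that the curvature of the induced connection on $\operatorname{End}_A(V)$ acts on an endomorphism $\phi$ by the commutator $[\mathcal R^V(X,Y),\phi]$, where $\mathcal R^V$ is the curvature on $V$; this is the content one uses, together with the fact that $\operatorname{Alt}_A^0(L,\operatorname{End}_A(V))=\operatorname{End}_A(V)$ for the base case $m=0$.) Finally, the second Bianchi identity $\nabla\mathcal R=0$ is obtained by applying $\nabla$ to $\mathcal R$ viewed as an element of $\operatorname{Alt}_A^2(L,\operatorname{End}_A(V))$ and using $\nabla^3=0$ in the form $\nabla(\nabla^2\omega)=\nabla^2(\nabla\omega)$: taking $\omega$ to be (locally, or formally) the identity section, or more cleanly computing $\nabla(\mathcal R\cup\omega)=(\nabla\mathcal R)\cup\omega+\mathcal R\cup\nabla\omega$ and comparing with $\nabla^2(\nabla\omega)=\mathcal R\cup\nabla\omega$, forces $(\nabla\mathcal R)\cup\omega=0$ for all $\omega$, hence $\nabla\mathcal R=0$.

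I expect the main obstacle to be purely organizational: keeping the shuffle signs, the $(-1)^{i+j}$ factors from the Koszul formula, and the insertion positions of the bracket terms $[X_i,X_j]$ all consistent through the derivation identity in step two, since every subsequent claim is leveraged off it. No genuine difficulty arises from the absence of smoothness or projectivity hypotheses — the entire argument is a formal manipulation valid for arbitrary $(L,A)$ and arbitrary $V$ — so the proof is essentially a careful transcription of the classical arguments, with the one conceptual substitution being that composition-and-evaluation of endomorphisms replaces multiplication in $A$, and \eqref{eq:multconn} guarantees the Leibniz compatibility this substitution requires.
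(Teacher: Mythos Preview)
Your proposal is correct and in fact constitutes a more detailed sketch than the paper's own proof, which simply declares the computation ``well-known'' and then issues a warning. The paper's one substantive remark is that the usual shortcut---checking the derivation property only on $\operatorname{Alt}_A^0(L,V)$ and $\operatorname{Alt}_A^1(L,V)$ and extending multiplicatively---fails here, because these low degrees need not generate $\operatorname{Alt}_A(L,V)$ as an $\operatorname{Alt}_A(L,\operatorname{End}_A(V))$-module when $L$ is not finitely generated, and naive one-forms are not spanned by exact ones when $L$ is non-reflexive. Your plan to expand $(\nabla(\Phi\cup\omega))(X_0,\dots,X_{k+m})$ directly via \eqref{eq:covder} and match terms is precisely what is required to avoid this trap, so you are already on the safe path; just be sure not to slip into an inductive argument that secretly assumes such a generating set.

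One small clarification worth flagging: as you note in your parenthetical, for $\omega\in\operatorname{Alt}_A^m(L,\operatorname{End}_A(V))$ the curvature of the \emph{induced} connection on $\operatorname{End}_A(V)$ acts by the graded commutator $[\mathcal R,\,\cdot\,]$, so the literal identity $\nabla^2\omega=\mathcal R\cup\omega$ as stated is most naturally read with $\omega\in\operatorname{Alt}_A^m(L,V)$ and $\cup$ the module action. Your Bianchi argument is cleanest in exactly that setting: from $\nabla(\mathcal R\cup\omega)=\nabla^2(\nabla\omega)=\mathcal R\cup\nabla\omega$ and the derivation property you get $(\nabla\mathcal R)\cup\omega=0$ for all $\omega$, and specializing to $\omega=v\in V=\operatorname{Alt}_A^0(L,V)$ gives $(\nabla\mathcal R)(X,Y,Z)v=0$ for every $v$, hence $\nabla\mathcal R=0$. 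This is sound and needs no ``local identity section''.
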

\begin{proof}
    This nasty calculation is well-known. Note, however, that $\operatorname{Alt}_{A}^{m}\left( L,V\right)$ with $m=0,1$ do generate $\operatorname{Alt}_{A}\left( L,V\right)$ as a left $\operatorname{Alt}_{A}\left( L,\operatorname{End}_{A} (V)\right)$-module only when $L$ is finitely generated. Also, if $L$ is non-reflexive the $A$-module of naive one-forms is not generated by exact forms.
    This makes the verifications more cumbersome.
\end{proof}

Clearly, if $ \nabla ^{i} \in \operatorname{Conn}_{L}( V_{i})$ are two $ L$-connections, $ i=1,2$, with curvature endomorphisms
$ \mathcal{R}_{\ }^{i} \in \operatorname{Alt}_{A}^{2}\left( L,\operatorname{End}_{A}( V)\right)$ then their sum $ \nabla ^{1} +\nabla ^{2} \in \operatorname{Conn}_{L}( V_{1} \oplus V_{2})$ has curvature endomorphism
 $ \mathcal{R}^{1} +\mathcal{R}^{2}\in\operatorname{Alt}_{A}^{2}\left( L,\operatorname{End}_{A}( V_{1}) \oplus \operatorname{End}_{A}( V_{2})\right)$. The latter $A$-module is seen as a $A$-submodule of $ \operatorname{Alt}_{A}^{2}\left( L,\operatorname{End}_{A}( V_{1} \oplus V_{2})\right)$. Similarly, we get a connection $ \nabla ^{1} \otimes \operatorname{id} +\operatorname{id} \otimes \nabla ^{2} \in \operatorname{Conn}_{L}( V_{1} \otimes V_{2})$ with curvature endomorphism 
$ \mathcal{R}^{1} \otimes \operatorname{id} +\operatorname{id} \otimes \mathcal{R}^{2} \in \operatorname{Alt}_{A}^{2}\left( L,\operatorname{End}_{A}( V_{1} \otimes V_{2})\right)$.

By an \emph{affine connection} we mean a $\operatorname{Der}(A)$-connection. The \emph{torsion} of an affine connection $\nabla$ is defined as the $\boldsymbol{k}$-bilinear antisymmetric map 
\begin{align*}
    (X,Y)\mapsto T_{\nabla}(X,Y):=\nabla_XY-\nabla_YX-[X,Y] 
\end{align*}
It turns out that it is $A$-bilinear, i.e., 
$T_{\nabla}\in\operatorname{Alt}_A^2(\operatorname{Der}(A),\operatorname{Der}(A))$. In fact we have for all $a\in A$
\begin{align*}
    T_{\nabla }( aX,Y) &=\nabla _{aX} Y-\nabla _{Y}( aX) -[aX,Y]\\
&=a\nabla _{X} Y-a\nabla _{Y}( X) -Y( a) X-a[X,Y]+Y( a) X=aT_{\nabla }( X,Y).
\end{align*}
If $\nabla $ is an affine connection then $X\mapsto \nabla _{X} -\frac{1}{2} T_{\nabla }( X,\ )$ defines a torsion-free affine connection. More generally, the torsion tensor $T_{\nabla}\in\operatorname{Alt}_A^2(L,L)$ can be defined for any $\nabla\in\operatorname{Conn}_L(L)$ whenever $(L,A)$ is a Lie-Rinehart algebra.
On an affine variety there is a flat affine connection associated to a system of generators of $\operatorname{Der}(A)$, see Subsection \ref{subsec:adjoint} below. It can be made torsion-free according to the recipe above.

\begin{theorem}\label{thm:canflat}
Let $V$ be a coherent $A$-module (this is, e.g., the case when $A$ is affine). Then $\operatorname{End}_{A}( V)$ admits a canonical flat multiplicative $\operatorname{Der}(A)$-connection $\mathrm{d} :\operatorname{End}_{A}( V)\rightarrow \Omega _{A|\boldsymbol{k}} \otimes _{\boldsymbol{k}}\operatorname{End}_{A}( V)$.
\end{theorem}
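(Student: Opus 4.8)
The plan is to exploit the fact that for a coherent module $V$, the endomorphism module $\operatorname{End}_A(V)$ carries a natural ``differentiation of endomorphisms'' operation that is forced on us by the algebra structure, independently of any choice of connection on $V$ itself. First I would fix a derivation $X\in\operatorname{Der}(A)$ and ask what it should mean to differentiate $\phi\in\operatorname{End}_A(V)$ along $X$. The key local observation is this: since $V$ is coherent, it is finitely presented, so locally we may write $A^q \xrightarrow{M} A^p \to V \to 0$ with $M$ a matrix over $A$. An endomorphism $\phi$ of $V$ lifts to a matrix $\Phi$ acting on $A^p$ compatibly with $M$. Applying $X$ entrywise to $\Phi$ gives a candidate $X(\Phi)$, and the point is that $[X(\Phi)]$, the induced map on the cokernel, is well-defined: different lifts of $\phi$ differ by something factoring through $M$, and $X$ of such a thing still factors through the image (using $X(M)$ together with the original relations), so the ambiguity dies in the cokernel. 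This produces $\mathrm{d}_X\phi \in \operatorname{End}_A(V)$, and one checks it does not depend on the chosen presentation by the usual comparison-of-resolutions argument. Coherence is exactly what guarantees finite presentations exist and behave well under localization, so the local constructions glue to a global $\mathrm{d}\colon \operatorname{End}_A(V)\to \Omega_{A|\boldsymbol k}\otimes_{\boldsymbol k}\operatorname{End}_A(V)$.

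Next I would verify the three required properties. That $\mathrm{d}$ is a $\operatorname{Der}(A)$-connection, i.e.\ $\mathrm{d}_X(a\phi) = X(a)\phi + a\,\mathrm{d}_X\phi$, is immediate from the Leibniz rule for $X$ applied to the lifted matrices $a\Phi$. Multiplicativity, $\mathrm{d}_X(\phi\psi) = (\mathrm{d}_X\phi)\psi + \phi(\mathrm{d}_X\psi)$, follows because a lift of $\phi\psi$ can be taken to be $\Phi\Psi$, and $X(\Phi\Psi) = X(\Phi)\Psi + \Phi X(\Psi)$ entrywise; passing to the cokernel gives the identity in $\operatorname{End}_A(V)$. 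Finally, flatness $\mathcal R = 0$: one computes that for $X,Y\in\operatorname{Der}(A)$, $[\mathrm{d}_X,\mathrm{d}_Y]\phi - \mathrm{d}_{[X,Y]}\phi$ is induced on the cokernel by $\big(X(Y(\Phi)) - Y(X(\Phi)) - [X,Y](\Phi)\big)$, which vanishes identically because $[X,Y]$ is by definition $X\circ Y - Y\circ X$ as an operator on $A$, hence acts the same way entrywise on any matrix over $A$. So the curvature is zero before even passing to the quotient.

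An alternative, more invariant route — which I would mention but probably not pursue in detail — is to note that $\operatorname{End}_A(V)$ sits inside $\operatorname{Hom}_{\boldsymbol k}(V,V)$ and to define $\mathrm{d}_X\phi$ via any auxiliary $\operatorname{Der}(A)$-connection $\nabla$ on $V$ (which exists, e.g., Zariski-locally where $V$ need not be projective but one can still build one on a presentation) by the induced connection formula $(\nabla_X\phi)(v) = \nabla_X(\phi v) - \phi(\nabla_X v)$, and then check that although $\nabla$ is not canonical, the \emph{difference} of two such induced connections is $[\theta,-]$ for an $\operatorname{End}_A(V)$-valued one-form $\theta$, i.e.\ an inner derivation — but inner derivations of the endomorphism algebra act trivially after one suitably interprets things, OR more precisely the resulting connection on $\operatorname{End}_A(V)$, restricted along the inclusion into $\operatorname{Der}(A)$-equivariant maps, is independent of the choice modulo the canonical identifications. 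This is essentially the content of the ``canonical flat connection on an endomorphism module'' alluded to at the start of Section~\ref{sec:curv}.

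The main obstacle I anticipate is the well-definedness and gluing step: showing rigorously that $X$ applied entrywise to a lift of $\phi$ descends to $\operatorname{End}_A(V)$ independently of (i) the choice of lift, (ii) the chosen finite presentation, and (iii) the open set over which the presentation is defined, so that the local pieces patch into a genuine $A$-module map $\operatorname{End}_A(V)\to \Omega_{A|\boldsymbol k}\otimes_{\boldsymbol k}\operatorname{End}_A(V)$. Coherence is used precisely here — it ensures kernels of maps between finite free modules are again finitely generated, so that ``factors through $M$'' is a finite condition stable under $X$ and under localization — and getting this bookkeeping clean, rather than the subsequent Leibniz/flatness checks, is where the real work lies. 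Once naturality is established, everything else is the routine entrywise computation sketched above.
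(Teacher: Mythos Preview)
Your approach is the same as the paper's: fix a finite free presentation (the paper works with a full free resolution, but only the first two terms are actually used), lift $\phi$ to a matrix $\Phi$ on the free cover, apply the derivation entrywise, and descend to the cokernel; then verify the connection axiom, multiplicativity, and flatness by entrywise matrix computations. Your sketches of those three checks match the paper's essentially line by line.

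The step you correctly isolate as ``the main obstacle'' is a genuine gap, and your proposed resolution of it does not work. You assert that if two lifts of $\phi$ differ by $NM$ then $X(NM)=X(N)M+N\,X(M)$ still factors through $M$; but the term $N\,X(M)$ has no reason to land in $\operatorname{im} M$. Concretely, take $A=\boldsymbol{k}[x]$, $V=A/(x)$ with presentation $A\xrightarrow{\cdot x}A\to V\to 0$. The zero endomorphism of $V$ lifts both to $\Phi=0$ and to $\Phi'=[x]$; applying $X=\partial/\partial x$ entrywise yields $X(\Phi)=0$ but $X(\Phi')=[1]$, which induces the \emph{identity} on $V$, not zero. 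So the descended operator depends on the lift. The paper's own argument at this juncture simply asserts that $X(\phi^{(0)})\delta$ factors through $\delta$ (invoking only projectivity of $F^{-1}$, which does not address the point) and then declares the result independent of the lift $\phi^{(0)}$; the example shows this fails. In fact the example shows more: $\operatorname{End}_A(V)\cong A/(x)$ carries no $\operatorname{Der}(A)$-connection whatsoever, since applying the Leibniz rule to $x\cdot\operatorname{id}_V=0$ along $\partial/\partial x$ forces $0=\operatorname{id}_V$. So the statement as written needs some additional hypothesis on $V$ before any argument of this shape can go through; your alternative route via an auxiliary connection $\nabla$ on $V$ runs into the same wall, since such a $\nabla$ need not exist.
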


\begin{proof}
We interprete a system of generators $v_{1},\dots, v_{\beta_0}$ of $V$ as a \emph{frame} (i.e., a row matrix of elements of $V$) 
$$\boldsymbol{v} =\begin{bmatrix}
v_{1} & \dotsc  & v_{\beta_0}
\end{bmatrix}\in\operatorname{Hom}_A(A^{\beta_0},V)$$
and consider a free resolution
$$F^{0} =A^{\beta _{0}}\xleftarrow{\delta } F^{-1} =A^{\beta _{1}}\xleftarrow{\delta } F^{-2} =A^{\beta _{2}}\leftarrow \dotsc $$
 of $V$ so that the augmented complex 
$$0\leftarrow V\xleftarrow{\boldsymbol{v}} F^{0}\xleftarrow{\delta } F^{-1}\xleftarrow{\delta } F^{-2}\xleftarrow{\delta } \dotsc $$
is exact. A general element $w\in V$ has a non-unique representation $w=\boldsymbol{v}\vec{w}$ with $\vec{w} \in F^{0}$. Since $F^{0}$ is projective we can lift of $\phi \in \operatorname{End}_{A}( V)$ to $F^{0}$, i.e., find a $\phi ^{( 0)} \in \operatorname{End}_{A}\left( F^{0}\right)$ such that $\phi w=\boldsymbol{v} \phi ^{( 0)}\vec{w}$. The difference of two such lifts $\phi ^{( 0)} ,\ \underline{\phi }^{( 0)}$ is an $A$-linear map $F^{0}\rightarrow \delta F^{-1}$, which vanishes after applying $\boldsymbol{v}$. Continuing the process we can lift $\phi $ to the whole $F$ that is there is a $A$-linear map of complexes
$$\Phi =\sum _{i< 0} \phi ^{( i)} :F\rightarrow F,\ \phi ^{( i)} \in \operatorname{End}_{A}( V).$$ This means $\delta \phi ^{( i)} =\phi ^{( i+1)} \delta .$

There is an $A$-linear map 
$\operatorname{Der}( A)\rightarrow \operatorname{Der}\left(\operatorname{End}_{A}\left( A^{m}\right)\right) ,\ X( M)_{i}^{j} :=X\left( M_{i}^{j}\right),$
where we represent an endomorphism $M\in \operatorname{End}_{A}\left( A^{m}\right)$ by the matrix $[ M_{i}^{j}]_{i,j=1,\dotsc ,m}$
such that $M\vec{e}_{i} =\sum M_{i}^{j}\vec{e}_{j}$ for the canonical basis $\vec{e}_{1} ,\dotsc ,\vec{e}_{m}$ of $A^{m}$.
In fact 
\begin{align}\label{eq:matLeib}
X( MN)_{i}^{j} =\sum _{k} X( M_{i}^{k} N_{k}^{i}) =\sum _{k} X( M_{i}^{k}) N_{k}^{i} +\sum _{k} M_{i}^{k} X( N_{k}^{i}) =( X( M) N+MX( N))_{i}^{j}
\end{align}
for $M,N\in \operatorname{End}_{A}\left( A^{m}\right)$. Let us also consider the corresponding universal derivation
$$\mathrm{d} :\operatorname{End}_{A}\left( A^{m}\right)\rightarrow \Omega _{A|\boldsymbol{k}}^{m\times m} \simeq \Omega _{A|\boldsymbol{k}} \otimes _{A}\operatorname{End}_{A}\left( A^{m}\right) ,\ \mathrm{d}( M)_{i}^{j} :=\mathrm{d}\left( M_{i}^{j}\right).$$

Let now $V$ be an $A$-module with free resolution $(F,\delta)$. We claim that  the $A$-linear map $\operatorname{Der}( A)\rightarrow \operatorname{Der}\left(\operatorname{End}_{A}( V)\right) ,\ X( \phi ) w=\boldsymbol{v} X( \phi ^{( 0)})\vec{w}$
for $\phi \in \operatorname{End}_{A}( V) ,\ X\in \operatorname{Der}( A) ,\ w=\boldsymbol{v}\vec{w} \in V$ is well defined.
In order to show this we inspect
$$\boldsymbol{v} X( \phi ^{( 0)})\vec{w} \in \boldsymbol{v} X( \phi ^{( 0)})\vec{w} +\boldsymbol{v} X( \phi ^{( 0)}) \delta F^{-1}.$$
We would like to lift $X( \phi ^{( 0)})$ to $F^{-1}$. But since $F^{-1}$ is projective we can find such a lift $\Xi ^{( -1)} \in \operatorname{End}_{A}\left( F^{-1}\right)$ such that $X( \phi ^{( 0)}) \delta =:-\delta \Xi ^{( -1)}$. 
But then 
$$\boldsymbol{v} X( \phi ^{( 0)}) \delta F^{-1} \in \boldsymbol{v} \delta \Xi ^{-1} F^{-1} =\{0\} ,$$
so that $\boldsymbol{v} X( \phi ^{( 0)})\vec{w}$ is independent of the choice of the lift $\phi ^{( 0)}$. Let us now analyse
$$\phi \psi w=\boldsymbol{v} \phi ^{( 0)}\overrightarrow{\boldsymbol{v} \psi ^{( 0)}\vec{w}} \in \boldsymbol{v} \phi ^{( 0)} \psi ^{( 0)}\vec{w} +\boldsymbol{v} \phi ^{( 0)} \delta F^{-1}.$$
But $\boldsymbol{v} \phi ^{( 0)} \delta F^{-1} =\boldsymbol{v} \delta \phi ^{( -1)} F^{-1} =\{0\}$ and therefore it follows that $\phi \psi w=\boldsymbol{v} \phi ^{( 0)} \psi ^{( 0)}\vec{w}$. From this and \ref{eq:matLeib} we can show the Leibniz rule for the composition $\phi \psi $
$$X( \phi \psi ) =X( \phi ) \psi +\phi X( \psi )$$ for $\phi ,\psi \in \operatorname{End}_{A}( V),$
and the proof of the claim is established.

With this we can define the universal derivation $\mathrm{d} :\operatorname{End}_{A}( V)\rightarrow \Omega _{A|\boldsymbol{k}} \otimes _{A}\operatorname{End}_{A}( V) ,\ $$\langle \mathrm{d} \phi ,X\rangle =X( \phi ) =:\mathrm{d}_{X} \phi $ for $\phi \in \operatorname{End}_{A}( V) ,\ X\in \operatorname{Der}( A)$. 
Since 
$$[\mathrm{d}_{X} ,\mathrm{d}_{Y}]\phi=X( Y( \phi )) -Y( X( \phi )) =\mathrm{d}_{[ X,Y]} \phi $$ for $\phi \in \operatorname{End}_{A}( V),\ X,Y\in\operatorname{Der}(A)$ the universal derivation $\mathrm{d}$ can be interpreted as a flat connection on $\operatorname{End}_{A}( V)$.
\end{proof}

\begin{theorem}
Let $( L,A)$ be a Lie-Rinehart algebra and $S$ a multiplicatively closed subset in $A$. Then the localization $\left( S^{-1} L,S^{-1} A\right)$ of $(L,A)$ at $S$ is a Lie-Rinehart algebra with bracket
$$\left[\frac{X}{s} ,\frac{Y}{t}\right] =\frac{st[ X,Y] -sX( t) Y+tY( s) X}{s^{2} t^{2}}$$
and anchor $\frac{X}{s}\left(\frac{a}{t} \ \right) =\frac{tX( a) -X( t)}{st^{2}}$ for $X,Y\in L,\ a\in A,\ s,t\in S$. Moreover, if $\nabla $ is an $L$-connection on the $A$-module $V$ with curvature endomorphism $\mathcal{R}$ then the formula
$$\nabla_{\frac{X}{s}}\frac{v}{t} :=\frac{t\nabla _{X} v-X( t) v}{st^{2}}$$ 
for $X\in L,\ v\in V,\ s,t\in S$
defines an $\left( S^{-1} L\right)$-connection on $S^{-1} V$ with curvature $\mathcal{R}\left(\frac{X}{r} ,\frac{Y}{s}\right)\frac{v}{t} =\frac{\mathcal{R}( X,Y) v}{rst}$ for $r,s,t\in S$. 
\end{theorem}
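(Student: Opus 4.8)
The plan is to verify the Lie--Rinehart axioms of Section~\ref{sec:LR} and the connection axioms of Section~\ref{sec:Lconn} directly from the fraction calculus, organised around one structural remark: all four displayed formulas are exactly what the axioms \emph{force} once one agrees that the natural map $(L,A)\to(S^{-1}L,S^{-1}A)$, $X\mapsto X/1$, $a\mapsto a/1$, should be a morphism of Lie--Rinehart pairs, that $S^{-1}L$ carries its usual $S^{-1}A$-module structure, and that $S^{-1}V$ is the $S^{-1}A$-module $S^{-1}A\otimes_A V$.

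I would start with the anchor. Every $\boldsymbol{k}$-derivation $X\colon A\to A$ extends uniquely to a $\boldsymbol{k}$-derivation of $S^{-1}A$ via the quotient rule $X(a/t)=\bigl(tX(a)-aX(t)\bigr)/t^{2}$; in particular $X(1/t)=-X(t)/t^{2}$. This gives a $\boldsymbol{k}$-linear map $\Der(A)\to\Der(S^{-1}A)$, hence, through the anchor of $(L,A)$, a map $L\to\Der(S^{-1}A)$, and extending $S^{-1}A$-linearly produces the assignment $X/s\mapsto \tfrac1s X(\cdot)$, which is the stated anchor. Independence of the representative $X/s$ and axioms~(1)--(2) are then bookkeeping: if $us'X=usX'$ in $L$ for some $u\in S$, apply the extended derivations and divide by $uss'$.

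Next comes the bracket. Writing $X/s=\tfrac1s X$ and using only $\boldsymbol{k}$-bilinearity, axiom~(3), and its antisymmetric companion $[aX,Y]=a[X,Y]-Y(a)X$, one computes
\[
\Bigl[\tfrac1sX,\ \tfrac1tY\Bigr]=\tfrac1s\,X\!\left(\tfrac1t\right)Y+\tfrac1{st}[X,Y]-\tfrac1t\,Y\!\left(\tfrac1s\right)X,
\]
and substituting the values of $X(1/t)$ and $Y(1/s)$ reproduces the formula in the statement. So the formula is not guessed; what remains is to confirm it is a legitimate definition (independent of representatives) and that the bracket it defines is antisymmetric, $\boldsymbol{k}$-bilinear, satisfies axiom~(3) over $S^{-1}A$, and satisfies the Jacobi identity. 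I expect the representative-independence of the bracket together with the Jacobi identity to be the main obstacle. The reason is that $L\to S^{-1}L$ need not be injective, so one cannot simply ``compute in $L$'', and the bracket is only $\boldsymbol{k}$-bilinear, so, unlike for the curvature below, there is no multilinearity trick reducing Jacobi to the case $r=s=t=1$; one has to clear denominators, expand $[X/r,Y/s]=Z/(r^{2}s^{2})$ with $Z=rs[X,Y]-rX(s)Y+sY(r)X\in L$, and then use the Jacobi identity and derivation property in $L$, discarding the $S$-torsion terms that appear after applying derivations to the defining relations (legitimate since an element of $S^{-1}L$ or $S^{-1}V$ is zero iff it is annihilated by some $s\in S$ already in $L$ or $V$). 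Conceptually this tedium is what is packaged by the general fact that Lie--Rinehart algebras base-change along localizations, but the explicit formulas still have to be matched against it.

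Finally the connection and its curvature. The formula $\nabla_{X/s}(v/t)=\bigl(t\nabla_X v-X(t)v\bigr)/(st^{2})$ is again forced, being $\tfrac1s\nabla_X\!\bigl(\tfrac1t v\bigr)=\tfrac1s\bigl(X(\tfrac1t)v+\tfrac1t\nabla_X v\bigr)$. One checks independence of representatives as for the bracket, then the two defining properties of an $S^{-1}L$-connection on $S^{-1}V$: $S^{-1}A$-linearity of $\xi\mapsto\nabla_\xi$ and the Leibniz rule $\nabla_\xi\bigl((a/u)w\bigr)=\xi(a/u)\,w+(a/u)\,\nabla_\xi w$; both follow from the Leibniz rule for $\nabla$ and the quotient rule for the extended derivations, so $\nabla^{S^{-1}}\in\operatorname{Conn}_{S^{-1}L}(S^{-1}V)$. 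For the curvature, the proposition of Section~\ref{sec:curv}, applied to the Lie--Rinehart algebra $(S^{-1}L,S^{-1}A)$ and the module $S^{-1}V$, tells us that $\mathcal{R}^{S^{-1}}(\xi,\eta)w=[\nabla_\xi,\nabla_\eta]w-\nabla_{[\xi,\eta]}w$ is $S^{-1}A$-bilinear in $(\xi,\eta)$ and $S^{-1}A$-linear in $w$; hence it suffices to evaluate at $\xi=X/1$, $\eta=Y/1$, $w=v/1$. There $\nabla_{X/1}(v/1)=(\nabla_X v)/1$ and $[X/1,Y/1]=[X,Y]/1$ give $\mathcal{R}^{S^{-1}}(X/1,Y/1)(v/1)=\bigl([\nabla_X,\nabla_Y]v-\nabla_{[X,Y]}v\bigr)/1=\mathcal{R}(X,Y)v/1$, and extracting the denominators $r,s,t$ by the multilinearity just quoted yields $\mathcal{R}^{S^{-1}}(X/r,Y/s)(v/t)=\mathcal{R}(X,Y)v/(rst)$.
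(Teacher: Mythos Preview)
Your proposal is correct and follows essentially the same route as the paper: direct verification of well-definedness and of the Lie--Rinehart axioms from the fraction calculus. The paper's written proof is in fact narrower than yours---it carries out in detail only the representative-independence of the anchor in the second argument (if $a/s\sim a'/s'$ then $X(a/s)=X(a'/s')$) and of the bracket in $Y/s$, and explicitly leaves the remaining axioms and the connection/curvature claims to the reader. Your outline covers more ground: the forcing heuristic explains where the formulas come from, and your curvature argument via the $S^{-1}A$-trilinearity of $\mathcal{R}^{S^{-1}}$ (reducing to $r=s=t=1$) is a clean step the paper does not spell out at all. Conversely, the paper's explicit $b^{2}(\cdots)=0$ computation for the anchor is the one place it is more detailed than you; your ``bookkeeping'' remark is correct but would benefit from that level of explicitness if you want a self-contained proof.
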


\begin{proof}
We would like to show that
$$X\left(\frac{a}{s}\right) =\frac{sX( a) -aX( s)}{s^{2}}$$
does not depend on the choice of the representative of $\frac{a}{s}$. Now $\frac{a'}{s'} \sim \frac{a}{s}$ exactly when
there is a $b\in A$ such that $b(s'a-sa') =0$. Applying $X$ to this be find $X( b)( sa'-s'a) +b( X( s) a'-X( s') a+sX( a') -s'X( a)) =0$. Multiplying this with $b$ we deduce $b^{2}( a'X( s) -aX( s ') +sX( a') -s'X( a)) =0$, which can be rewritten as
\begin{align}\label{eq:bla}
b^{2} sX( a') -b^{2} s'X( a) =-b^{2} a'X( s) +b^{2} aX( s ')
\end{align}
This means
\begin{align*}
&b^{2}\left( s^{2} s'X( a') -s^{2} a'X( s') \ -s^{\prime 2} sX( a) +s^{\prime 2} aX( s)\right)\\
&=ss'\left( b^{2} sX( a') -b^{2} s'X( a)\right) -b^{2} s^{2} a'X( s') \ +b^{2} s^{\prime 2} aX( s)\\
&=ss'b^{2} aX( s ') -ss'b^{2} a'X( s) -b^{2} s^{2} a'X( s') \ +b^{2} s^{\prime 2} aX( s)\\
&\stackrel{{\mbox{\footnotesize Eqn. \eqref{eq:bla}}}}{=}( bs' a-bsa') bsX( s ') +( bs'a-bsa') bs'X( s) =0,
\end{align*}
which shows the claim.

Similarly one checks that 
$$\left[ X,\frac{Y}{s}\right] =\frac{s[ X,Y] -X( s) Y}{s^{2}}$$
does not depend on the choice of the representative $\frac{Y'}{s'} \sim \frac{Y}{s}$ by applying $b[ X,\ ]$ to
$b( sY'-s'Y) =0$. Now it follows that
$$\left[\frac{X}{s} ,\frac{Y}{t}\right] =\frac{st[ X,Y] -sX( t) Y+tY( s) X}{s^{2} t^{2}}$$
is correctly defined. We leave it to the reader to verify the axioms for $\left( S^{-1} L,S^{-1} A\right)$ as well as the claims about the connection. 
\end{proof}

\section{Gauge group action on $\operatorname{Conn}_L(V)$} \label{sec:gauge}

The aim is now to introduce a covariant Maurer-Cartan form $\rho _{g}$ and the action of the gauge group $\operatorname{Aut}_{A}( V)$ on $\operatorname{Conn}_{L}( V)$. The results of this section are classical when $L$ and $V$ are projective and the algebra $A$ is smooth. We do not know of any source that treats the general case.


\begin{lemma}
Let $\nabla$  be an $L$-connection on the $A$-module $V$ with curvature endomorphism $\mathcal R$.
For $g\in \operatorname{Aut}_{A}( V)$ the \emph{Maurer-Cartan form} $\rho _{g} :=g\nabla  ( g^{-1}) \in \operatorname{Hom}_A(L,\operatorname{End}_{A}( V))$ satisfies
\begin{enumerate}
    \item $\nabla \rho _{g} +\frac{1}{2}[ \rho _{g} ,\rho _{g}] =g\mathcal Rg^{-1}-\mathcal R$,
\item $\rho _{hg} =h\rho _{g} h^{-1} +\rho _{h}$ for $g,h\in\operatorname{Aut}_A(V)$,
\item $\rho _{\operatorname{id}} =0$.
\end{enumerate}
\end{lemma}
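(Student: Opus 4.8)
The plan is to compute each identity directly from the definitions, treating the Maurer-Cartan form $\rho_g = g\,\nabla(g^{-1})$ as an $\operatorname{End}_A(V)$-valued naive one-form, i.e.\ $\rho_g(X) = g\,\nabla_X(g^{-1})$ where $\nabla$ here denotes the induced connection on $\operatorname{End}_A(V)$. The key computational tool is the Leibniz rule for the induced connection on $\operatorname{Hom}$-modules, in particular Equation \eqref{eq:multconn}: for composable $A$-linear maps $\phi,\psi$ one has $\nabla_X(\phi\psi) = (\nabla_X\phi)\psi + \phi(\nabla_X\psi)$, and the fact that on the module $A$ itself (or on identity endomorphisms) the connection acts by the anchor. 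The cleanest organising principle is to first establish the identity $\nabla_X(g^{-1}) = -g^{-1}(\nabla_X g)g^{-1}$, which follows from differentiating $g g^{-1} = \operatorname{id}$ and using $\nabla_X(\operatorname{id}) = 0$ (since the induced connection on $\operatorname{End}_A(V)$ annihilates the identity). This makes $\rho_g = -(\nabla g)g^{-1}$ as well, which is convenient.

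First I would prove (3): $\rho_{\operatorname{id}} = \operatorname{id}\cdot\nabla(\operatorname{id}^{-1}) = \nabla(\operatorname{id}) = 0$, using that the covariant derivative of the identity endomorphism vanishes — this is immediate from the definition of the induced connection on $\operatorname{End}_A(V)$, since $(\nabla_X \operatorname{id})(v) = \nabla_X(\operatorname{id}(v)) - \operatorname{id}(\nabla_X v) = 0$. Next I would prove (2), the cocycle identity, by a direct expansion: $\rho_{hg}(X) = hg\,\nabla_X((hg)^{-1}) = hg\,\nabla_X(g^{-1}h^{-1})$, then apply \eqref{eq:multconn} to split $\nabla_X(g^{-1}h^{-1}) = (\nabla_X g^{-1})h^{-1} + g^{-1}(\nabla_X h^{-1})$, giving $hg(\nabla_X g^{-1})h^{-1} + hg g^{-1}(\nabla_X h^{-1}) = h\big(g\nabla_X(g^{-1})\big)h^{-1} + h\nabla_X(h^{-1}) = h\rho_g(X)h^{-1} + \rho_h(X)$, which is exactly (2).

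The main work is (1). I would start from the covariant-derivative formula \eqref{eq:covder} for one-forms: for $\omega = \rho_g \in \operatorname{Alt}^1_A(L,\operatorname{End}_A V)$, $(\nabla\rho_g)(X,Y) = \nabla_X(\rho_g(Y)) - \nabla_Y(\rho_g(X)) - \rho_g([X,Y])$, where the outer $\nabla$ is the connection on $\operatorname{End}_A(V)$ and the products are compositions of endomorphisms, interpreted via \eqref{eq:multconn}. Substituting $\rho_g(Y) = -(\nabla_Y g)g^{-1}$ and repeatedly applying the Leibniz rule \eqref{eq:multconn}, one expands $\nabla_X\big((\nabla_Y g)g^{-1}\big) = (\nabla_X\nabla_Y g)g^{-1} + (\nabla_Y g)(\nabla_X g^{-1}) = (\nabla_X\nabla_Y g)g^{-1} - (\nabla_Y g)g^{-1}(\nabla_X g)g^{-1}$. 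Antisymmetrising in $X,Y$ and subtracting the $[X,Y]$ term produces, in the terms quadratic in $\nabla g$, precisely $\frac12[\rho_g,\rho_g](X,Y) = [\rho_g(X),\rho_g(Y)]$ up to sign bookkeeping (recall $[\rho_g,\rho_g](X,Y) = 2[\rho_g(X),\rho_g(Y)]$ for a one-form with the $\cup$-bracket convention here), while the terms involving $[\nabla_X,\nabla_Y] - \nabla_{[X,Y]}$ acting on $g$ and on $g^{-1}$ assemble into $\mathcal R\,g\cdot g^{-1}$-type expressions. Using that $\mathcal R$ is the curvature of the induced connection on $\operatorname{End}_A(V)$, one has $(\,[\nabla_X,\nabla_Y]-\nabla_{[X,Y]}\,)(g) = \mathcal R(X,Y)\circ g - g\circ\mathcal R(X,Y)$ (the commutator action of curvature on endomorphisms), and similarly for $g^{-1}$; combining these and conjugating by $g$ yields the right-hand side $g\mathcal R g^{-1} - \mathcal R$.

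The step I expect to be the main obstacle is the careful bookkeeping in (1): keeping track of left versus right multiplication by $g$ and $g^{-1}$, the precise sign and factor-of-two conventions in $[\rho_g,\rho_g]$ relative to the $\cup$-product on $\operatorname{Alt}_A(L,\operatorname{End}_A V)$, and correctly identifying how the curvature of the induced $\operatorname{End}_A(V)$-connection acts on $g$ as a commutator $[\mathcal R(X,Y),g]$ rather than as a plain composition. A clean way to control this is to work with the identity $\rho_g = -(\nabla g)g^{-1}$ throughout, and to note that $\nabla^2 g = \mathcal R\cup g - g\cup\mathcal R$ (the $m=0$ case of Theorem \ref{thm:bianchi}, adapted to the two-sided $\operatorname{End}_A(V)$-bimodule structure, i.e.\ the curvature of $\operatorname{Hom}_A(V,V)$), which packages exactly the curvature terms needed; then (1) reduces to a short identity among $\nabla^2 g$, $(\nabla g)g^{-1}(\nabla g)g^{-1}$, and their antisymmetrisations, with no further reference to the Koszul formula. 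Everything else is routine Leibniz-rule manipulation that holds with no smoothness, projectivity, or finiteness hypotheses on $A$, $L$, or $V$, since \eqref{eq:multconn} was established in full generality.
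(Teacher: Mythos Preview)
Your proposal is correct and follows essentially the same route as the paper. Both proofs establish $\rho_g = g\nabla(g^{-1}) = -(\nabla g)g^{-1}$ from $\nabla(\operatorname{id})=0$, then expand $(\nabla\rho_g)(X,Y)$ via the Koszul formula and the Leibniz rule \eqref{eq:multconn}, identify the second-order terms as $g[\mathcal{R}(X,Y),g^{-1}]$ (equivalently $-[\mathcal{R}(X,Y),g]g^{-1}$) through the commutator action of curvature on $\operatorname{End}_A(V)$, and observe that the quadratic cross-terms cancel against $\tfrac12[\rho_g,\rho_g]$. The only cosmetic differences are that the paper works with the representation $g\nabla(g^{-1})$ while you prefer $-(\nabla g)g^{-1}$, and for (2) the paper applies Leibniz to $\nabla(hg)$ whereas you apply it to $\nabla(g^{-1}h^{-1})$; these are mirror images of the same computation.
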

\begin{proof}
In order to prove (1) we use Equation \eqref{eq:multconn} to deduce from $\operatorname{id}=gg^{-1}$ that $\rho_{g} =g\left(\nabla g^{-1}\right) =-(\nabla g) g^{-1}$.
For $X,Y\in L$ we evaluate
\begin{align*}
\nabla \left( g\nabla \left( g^{-1}\right)\right)( X,Y)&=\nabla _{X}\left( g\nabla _{Y}\left( g^{-1}\right)\right) -\nabla _{Y}\left( g\nabla _{X}\left( g^{-1}\right)\right) -g\nabla _{[ X,Y]}\left( g^{-1}\right)\\
&=\nabla _{X}( g) \nabla _{Y}\left( g^{-1}\right) -\nabla _{X}( g) \nabla _{Y}\left( g^{-1}\right) +g[ \nabla _{X} ,\nabla _{Y}]\left( g^{-1}\right) -g\nabla _{[ X,Y]}\left( g^{-1}\right)\\
&=\nabla _{X}( g) \nabla _{Y}\left( g^{-1}\right) -\nabla _{X}( g) \nabla _{Y}\left( g^{-1}\right) +g\left[\mathcal{R}( X,Y) ,g^{-1}\right]
\end{align*}
and
\begin{align*}
\frac{1}{2}\left[ g\nabla \left( g^{-1}\right) ,g\nabla \left( g^{-1}\right)\right]( X,Y)&=g\nabla _{X}\left( g^{-1}\right) g\nabla _{Y}\left( g^{-1}\right) -g\nabla _{Y}\left( g^{-1}\right) g\nabla _{X}\left( g^{-1}\right)\\
&=-\nabla _{X}( g) \nabla _{Y}\left( g^{-1}\right) +\nabla _{X}( g) \nabla _{Y}\left( g^{-1}\right).
\end{align*}
We conclude that $\nabla \left( g\nabla \left( g^{-1}\right)\right) +\frac{1}{2}\left[ g\nabla \left( g^{-1}\right) ,g\nabla \left( g^{-1}\right)\right]=g\left[\mathcal{R} ,g^{-1}\right] =g\mathcal{R} g^{-1} -\mathcal{R}$ which shows (1).

To verify (2) we evaluate $\rho _{hg} =hg\nabla( hg)^{-1} =-(\nabla( hg)) g^{-1} h^{-1} =-( h\nabla g+(\nabla h) g) g^{-1} h^{-1} =h\rho _{g} h^{-1} +\rho _{h}.$
\end{proof}
\begin{remark}
     We write also $\rho _{g}^{\nabla }$ instead of $\rho _{g}$ to indicate the dependence on $\nabla $. For the Maurer-Cartan form associated to the canonical flat connection $\mathrm{d}$ on $\operatorname{End}_{A}( V)$ (see Theorem \ref{thm:canflat}) we write $\rho _{g}^{0} =g\left(\mathrm{d} g^{-1}\right)$. It satisfies $\mathrm d \rho _{g}^{0}+\frac{1}{2}[\rho _{g}^{0},\rho _{g}^{0}]=0$ as well  as (2), (3).
     However, $\rho _{g}^{0}$ does not serve our purposes and we will work with $\rho _{g}^{\nabla }$.
\end{remark}

\begin{proposition}\label{prop:Reta}
Take an $L$-connection $\nabla $ on the $A$-module $V$ and let $\mathcal{R}$ be its curvature endomorphism.
The curvature endomorphism $\mathcal R^{\eta }$ of the connection $\nabla ^{\eta } :=\nabla +\eta $ \ with $\eta \in \operatorname{Hom}_{A}\left( L,\operatorname{End}_{A}( V)\right)$ is given by
$\mathcal{R}^{\eta } =\mathcal{R} +\nabla \eta +\frac{1}{2}[ \eta ,\eta \mathcal{]}$.
\end{proposition}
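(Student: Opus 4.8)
The plan is to compute the curvature endomorphism $\mathcal R^\eta$ directly from its definition $\mathcal R^\eta(X,Y) = [\nabla^\eta_X, \nabla^\eta_Y] - \nabla^\eta_{[X,Y]}$, expand using $\nabla^\eta_X = \nabla_X + \eta(X)$, and collect terms. First I would write, for $X,Y\in L$ and $v\in V$,
\[
\nabla^\eta_X\nabla^\eta_Y v = \nabla_X\nabla_Y v + \nabla_X(\eta(Y)v) + \eta(X)\nabla_Y v + \eta(X)\eta(Y)v.
\]
Applying the Leibniz rule for the induced connection on $\operatorname{End}_A(V)$ (Equation \eqref{eq:multconn}, together with the tensor/Hom rules from Section \ref{sec:Lconn}), we have $\nabla_X(\eta(Y)v) = (\nabla_X(\eta(Y)))v + \eta(Y)\nabla_X v$, where $\nabla_X(\eta(Y))$ denotes the covariant derivative of the endomorphism $\eta(Y)$. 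Antisymmetrizing in $X,Y$, the cross terms $\eta(Y)\nabla_X v$ and $\eta(X)\nabla_Y v$ pair up with terms coming from the other order, and one is left with $[\nabla_X,\nabla_Y]v$ plus $\bigl(\nabla_X(\eta(Y)) - \nabla_Y(\eta(X))\bigr)v$ plus $[\eta(X),\eta(Y)]v$.

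Next I would handle the $\nabla^\eta_{[X,Y]}$ term, which simply contributes $\nabla_{[X,Y]}v + \eta([X,Y])v$. Subtracting, the terms $[\nabla_X,\nabla_Y]v - \nabla_{[X,Y]}v$ reassemble into $\mathcal R(X,Y)v$, and what remains is
\[
\mathcal R^\eta(X,Y) = \mathcal R(X,Y) + \bigl(\nabla_X(\eta(Y)) - \nabla_Y(\eta(X)) - \eta([X,Y])\bigr) + [\eta(X),\eta(Y)].
\]
Now I would recognize the parenthesized expression as exactly $(\nabla\eta)(X,Y)$ via the Koszul formula \eqref{eq:covder} for the covariant derivative applied to the $\operatorname{End}_A(V)$-valued one-form $\eta$, and the last term as $\tfrac12[\eta,\eta](X,Y)$ using the definition of the $\cup$-product (the factor $\tfrac12$ absorbing the two shuffle terms $[\eta(X),\eta(Y)]$ and $-[\eta(Y),\eta(X)] = [\eta(X),\eta(Y)]$). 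This yields $\mathcal R^\eta = \mathcal R + \nabla\eta + \tfrac12[\eta,\eta]$, as claimed.

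I expect the main obstacle to be bookkeeping rather than conceptual: one must be careful that $\nabla_X(\eta(Y))$ really means the endomorphism-valued derivative (not $\eta$ evaluated on some derived argument), and that the sign conventions in the induced connection on $\operatorname{Hom}_A$/$\operatorname{End}_A$ and in the bracket $[\eta,\eta]$ of $\operatorname{End}_A(V)$-valued forms are consistent with those fixed earlier — in particular that $[\eta,\eta](X,Y) = [\eta(X),\eta(Y)] - [\eta(Y),\eta(X)] = 2[\eta(X),\eta(Y)]$. Since no projectivity or smoothness hypotheses enter, and everything is $A$-multilinear and local in $X,Y,v$, there are no subtleties about generators or exactness of forms of the kind flagged in the proof of Theorem \ref{thm:bianchi}; the identity is a purely formal consequence of the Leibniz rules already established.
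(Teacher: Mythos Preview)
Your proposal is correct and follows essentially the same route as the paper's proof: both expand $[\nabla_X+\eta(X),\nabla_Y+\eta(Y)]-\nabla_{[X,Y]}-\eta([X,Y])$, identify the cross terms $[\nabla_X,\eta(Y)]+[\eta(X),\nabla_Y]-\eta([X,Y])$ with $(\nabla\eta)(X,Y)$ via the Koszul formula, and recognize $[\eta(X),\eta(Y)]=\tfrac12[\eta,\eta](X,Y)$. The paper merely compresses this into two lines without passing through an explicit $v\in V$, but the computation is identical.
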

\begin{proof}
We calculate
\begin{align*}
\left[ \nabla _{X}^{\eta } ,\nabla _{Y}^{\eta }\right] -\nabla _{[ X,Y]}^{\eta }& =[ \nabla _{X} +\eta ( X) ,\nabla _{Y} +\eta ( Y)] -\nabla _{[ X,Y]} -\eta ([ X,Y])\\&=[ \nabla _{X} ,\nabla _{Y}] -\nabla _{[ X,Y]} +(\nabla  \eta )( X,Y) +[ \eta ( X) ,\eta ( Y)]=\left(\mathcal{R} +\nabla \eta +\frac{1}{2}[ \eta ,\eta \mathcal{]}\right)( X,Y) .
\end{align*}
\end{proof}


Fix a $\nabla \in \operatorname{Conn}_{L}( V)$ with curvature endomorphism $\mathcal{R}$. We define the action of an element $g$ of the gauge group $\operatorname{Aut}_{A}( V)$ on an arbitrary $L$-connection $\nabla ^{\eta } =\nabla +\eta \in \operatorname{Conn}_{L}( V)$ by $g\left( \nabla ^{\eta }\right) =\nabla ^{g.\eta }=\nabla+g.\eta$ with 
$$g.\eta =\rho_{g} +g\eta g^{-1},$$
where $\rho_{g}=g(\nabla  g^{-1})$.

\begin{theorem}
Take an $L$-connection $\nabla $ on an $A$-module $V$ and let $\mathcal{R}$ be its curvature endomorphism. Then for each
$\eta \in \operatorname{Hom}_{A}\left( L,\operatorname{End}_{A}( V)\right)$  and $g\in\operatorname{Aut}_A(V)$ we have $\mathcal{R}^{g.\eta } =g\mathcal{R}^\eta g^{-1}$
for the curvature $\mathcal R^{g.\eta }$ of $\nabla ^{g.\eta }$.
\end{theorem}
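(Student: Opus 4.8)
The plan is to reduce the claim to the two facts already established: the curvature formula for a shifted connection (Proposition~\ref{prop:Reta}) and the Maurer–Cartan identity (1) of the preceding Lemma. First I would unwind the definition: $\nabla^{g.\eta}=\nabla+g.\eta$ with $g.\eta=\rho_g+g\eta g^{-1}$, so by Proposition~\ref{prop:Reta} applied to the one-form $g.\eta$ we get
\begin{align*}
\mathcal R^{g.\eta}=\mathcal R+\nabla(g.\eta)+\tfrac12[g.\eta,g.\eta].
\end{align*}
The whole task is then to show the right-hand side equals $g\mathcal R^\eta g^{-1}=g\bigl(\mathcal R+\nabla\eta+\tfrac12[\eta,\eta]\bigr)g^{-1}$.

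Next I would expand $\nabla(g.\eta)=\nabla\rho_g+\nabla(g\eta g^{-1})$. For the second summand I use that $\nabla$ is a derivation for composition (Equation~\eqref{eq:multconn}, together with the derivation property of the covariant derivative from Theorem~\ref{thm:bianchi}): $\nabla(g\eta g^{-1})=(\nabla g)\eta g^{-1}+g(\nabla\eta)g^{-1}+g\eta(\nabla g^{-1})$. Using $\nabla g^{-1}=-g^{-1}(\nabla g)g^{-1}$ (differentiate $\mathrm{id}=gg^{-1}$) and $\rho_g=-(\nabla g)g^{-1}=g(\nabla g^{-1})$, the first and third terms can be written in terms of $\rho_g$, yielding something like $-\rho_g(g\eta g^{-1})+g(\nabla\eta)g^{-1}+(g\eta g^{-1})\rho_g=g(\nabla\eta)g^{-1}-[\rho_g,g\eta g^{-1}]$. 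Separately I would expand $\tfrac12[g.\eta,g.\eta]=\tfrac12[\rho_g,\rho_g]+[\rho_g,g\eta g^{-1}]+\tfrac12[g\eta g^{-1},g\eta g^{-1}]$, noting $[g\eta g^{-1},g\eta g^{-1}]=g[\eta,\eta]g^{-1}$ since conjugation by $g$ is an algebra automorphism of $\operatorname{End}_A(V)$ (and the bracket on $\operatorname{Alt}^1$ is built from composition).

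Now I would add everything up. The cross terms $-[\rho_g,g\eta g^{-1}]$ and $+[\rho_g,g\eta g^{-1}]$ cancel. What remains is
\begin{align*}
\mathcal R^{g.\eta}=\mathcal R+\nabla\rho_g+\tfrac12[\rho_g,\rho_g]+g(\nabla\eta)g^{-1}+\tfrac12 g[\eta,\eta]g^{-1}.
\end{align*}
By identity (1) of the Lemma, $\nabla\rho_g+\tfrac12[\rho_g,\rho_g]=g\mathcal R g^{-1}-\mathcal R$, so the $\mathcal R$ terms combine to $g\mathcal R g^{-1}$ and we obtain $\mathcal R^{g.\eta}=g\mathcal R g^{-1}+g(\nabla\eta)g^{-1}+\tfrac12 g[\eta,\eta]g^{-1}=g\mathcal R^\eta g^{-1}$, as desired.

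The main obstacle is purely bookkeeping: getting the signs right in $\nabla g^{-1}=-g^{-1}(\nabla g)g^{-1}$ and in the two expressions for $\rho_g$, and making sure that the Leibniz rule \eqref{eq:multconn} is applied in the correct form for an $\operatorname{End}_A(V)$-valued one-form (so that the bracket terms $[\rho_g,g\eta g^{-1}]$ genuinely cancel rather than doubling). I would also double-check that no finite-generation or projectivity hypothesis sneaks in — it does not, since every step uses only the derivation property of $\nabla$ on $\operatorname{End}_A(V)$ and the already-proven Lemma and Proposition, all of which hold with no assumptions on $A$, $L$, or $V$. A cleaner, essentially calculation-free alternative would be to observe that the gauge action is designed precisely so that $\nabla^{g.\eta}$ is conjugate to $\nabla^\eta$ in the sense $\nabla^{g.\eta}_X=g\,\nabla^\eta_X\,g^{-1}$ as operators on $V$; granting that, $\mathcal R^{g.\eta}(X,Y)=[g\nabla^\eta_Xg^{-1},g\nabla^\eta_Yg^{-1}]-g\nabla^\eta_{[X,Y]}g^{-1}=g\mathcal R^\eta(X,Y)g^{-1}$ is immediate, and one only needs to verify the operator identity $\nabla^{g.\eta}_X=g\nabla^\eta_Xg^{-1}$, which is a one-line check from $g.\eta=g(\nabla g^{-1})+g\eta g^{-1}$.
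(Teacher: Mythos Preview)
Your main argument is correct and essentially identical to the paper's: both apply Proposition~\ref{prop:Reta} to $g.\eta=\rho_g+g\eta g^{-1}$, expand the $\nabla$ and bracket terms, and use the Maurer--Cartan identity $\nabla\rho_g+\tfrac12[\rho_g,\rho_g]=g\mathcal R g^{-1}-\mathcal R$ to finish; the only cosmetic difference is that you first rewrite $(\nabla g)\eta g^{-1}+g\eta(\nabla g^{-1})$ as $-[\rho_g,g\eta g^{-1}]$ and then cancel, whereas the paper keeps those two terms explicit and computes $[\rho_g,g\eta g^{-1}]=-(\nabla g)\eta g^{-1}-g\eta\nabla(g^{-1})$ under a brace.

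Your closing remark---verifying directly that $\nabla^{g.\eta}_X=g\,\nabla^\eta_X\,g^{-1}$ as operators on $V$ and then conjugating the curvature formula---is a genuinely different and shorter route that the paper does not take; it trades the Maurer--Cartan bookkeeping for a single operator identity and makes the gauge covariance of curvature transparent.
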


\begin{proof} We calculate using Proposition \ref{prop:Reta}
\begin{align*}
\mathcal{R}^{g.\eta }& =\mathcal{R} +\nabla (g.\eta) +\frac{1}{2}[ g.\eta ,g.\eta ]\\
&=\mathcal{R} +\nabla \rho _{g}+\nabla \left( g\eta g^{-1}\right) +\frac{1}{2}[ \rho _{g} ,\rho _{g}] +\left[ \rho _{g} ,g\eta g^{-1}\right] +\frac{1}{2} g[ \eta ,\eta ] g^{-1}\\
&=\mathcal{R} +\underbrace{\nabla \rho _{g} +\frac{1}{2}[ \rho _{g} ,\rho _{g}]}_{=g\mathcal{R} g^{-1} -\mathcal{R}} +( \nabla g) \eta g^{-1} +g\eta  \nabla \left( g^{-1}\right) +\underbrace{\left[ \rho _{g} ,g\eta g^{-1}\right]}_{=-( \nabla g) \eta g^{-1} -g\eta \nabla \left( g^{-1}\right)} +g\left( \nabla \eta +\frac{1}{2}[ \eta ,\eta ]\right) g^{-1}\\
&=g\mathcal{R} g^{-1} +g\left( \nabla \eta +\frac{1}{2}[ \eta ,\eta ]\right) g^{-1}.\end{align*}
\end{proof}

\section{Curvature for finitely generated projective modules}\label{sec:Chern}

The goal in this section is to prove Theorem \ref{thm:chern} providing naive de Rham cohomology classes associated to trace polynomials of the curvature endomorphism in the case of a finitely generated projective $A$-module $V$. For a presentation
of the subject for smooth manifolds the reader may consult \cite{BGV} and for basic material on finitely generated projective modules \cite{Lombardi}.

\begin{lemma}
    Let $ V$ be a finitely generated projective $ A$-module. Pick a \emph{coordinate system} $ (\boldsymbol{v } ,\mathcal{I})$, i.e., a system of generators $ v _{i} \in V,\ v^{i} \in \operatorname{Hom}_{A}( V,A)$ with $ i$ ranging in a finite index set $ \mathcal{I}$ such that $ \sum _{i\in \mathcal{I}} v^{i}( v) v _{i} =v$ for all $ v\in V$. Then $ \operatorname{trace}_{V} :\operatorname{End}_{A}( V)\rightarrow A,\ \varphi \mapsto \sum _{i\in \mathcal{I}} v ^{i}( \varphi ( v_{i}))$ does not depend on the choice of $ \boldsymbol{v }$ and satisfies, $ \operatorname{trace}_{V}( \varphi \psi ) =\operatorname{trace}_{V}( \psi \varphi )$ for all $ \varphi ,\psi \in \operatorname{End}_{A}( V)$.
\end{lemma}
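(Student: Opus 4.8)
The plan is to verify the two claims separately, starting from the definition of a coordinate system $(\boldsymbol v,\mathcal I)$ for the finitely generated projective module $V$, i.e.\ a finite family $v_i\in V$, $v^i\in\operatorname{Hom}_A(V,A)$ with $\sum_{i\in\mathcal I}v^i(v)\,v_i=v$ for all $v\in V$. Recall that such a coordinate system is exactly what a dual basis lemma provides, and it encodes a splitting of a surjection $A^{\mathcal I}\twoheadrightarrow V$; I will use it purely formally.

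\emph{Trace property $\operatorname{trace}_V(\varphi\psi)=\operatorname{trace}_V(\psi\varphi)$.} First I would compute, for $\varphi,\psi\in\operatorname{End}_A(V)$, that
\begin{align*}
\operatorname{trace}_V(\varphi\psi)=\sum_{i\in\mathcal I}v^i\bigl(\varphi(\psi(v_i))\bigr)
=\sum_{i\in\mathcal I}v^i\Bigl(\varphi\bigl(\textstyle\sum_{j\in\mathcal I}v^j(\psi(v_i))\,v_j\bigr)\Bigr)
=\sum_{i,j\in\mathcal I}v^j(\psi(v_i))\,v^i(\varphi(v_j)),
\end{align*}
where I inserted the resolution of identity $\psi(v_i)=\sum_j v^j(\psi(v_i))\,v_j$ and used $A$-linearity of $v^i$ and $\varphi$. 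This last double sum is manifestly symmetric under simultaneously swapping the roles of $\varphi$ and $\psi$ together with relabelling $i\leftrightarrow j$: it equals $\sum_{i,j}v^i(\varphi(v_j))\,v^j(\psi(v_i))=\operatorname{trace}_V(\psi\varphi)$ by the same computation applied to $\psi\varphi$. So the trace property is a one-line manipulation once the resolution of identity is inserted in the right place; this is routine.

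\emph{Independence of the coordinate system.} Let $(\boldsymbol v,\mathcal I)$ and $(\boldsymbol w,\mathcal J)$ be two coordinate systems, with associated traces $\operatorname{trace}_V$ and $\operatorname{trace}_V'$. For $\varphi\in\operatorname{End}_A(V)$ I would write
\begin{align*}
\operatorname{trace}_V(\varphi)=\sum_{i\in\mathcal I}v^i(\varphi(v_i))
=\sum_{i\in\mathcal I}v^i\Bigl(\varphi\bigl(\textstyle\sum_{j\in\mathcal J}w^j(v_i)\,w_j\bigr)\Bigr)
=\sum_{i\in\mathcal I}\sum_{j\in\mathcal J}w^j(v_i)\,v^i(\varphi(w_j)),
\end{align*}
using the resolution of identity for $\boldsymbol w$ on the argument $v_i$. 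Now, since $v^i(\varphi(w_j))\in A$ is a scalar and the $v^i$ are $A$-linear, I can push it back inside and use the resolution of identity for $\boldsymbol v$, namely $\sum_{i\in\mathcal I}v^i(\varphi(w_j))\,v_i=\varphi(w_j)$ -- more precisely $\sum_{i\in\mathcal I}w^j(v_i)\,v^i(\varphi(w_j))=w^j\bigl(\sum_{i\in\mathcal I}v^i(\varphi(w_j))\,v_i\bigr)=w^j(\varphi(w_j))$. Summing over $j$ yields $\operatorname{trace}_V(\varphi)=\sum_{j\in\mathcal J}w^j(\varphi(w_j))=\operatorname{trace}_V'(\varphi)$.

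The only mild subtlety -- and the step I would be most careful about -- is the bookkeeping of which resolution of identity gets inserted where and the legitimacy of moving the scalars $v^i(\varphi(w_j))\in A$ in and out through the $A$-linear maps; there is no analytic or topological obstacle since everything is a finite sum of algebraic identities, and finite generation guarantees the index sets are finite so all sums converge trivially. No smoothness or further hypothesis on $A$ is used. I would present the two computations in the order above, and remark that the trace property is in fact a formal consequence of the independence statement applied to the two coordinate systems $(\boldsymbol v,\mathcal I)$ and its ``$\psi$-twist'', but it is cleanest to just give the direct symmetric-sum argument.
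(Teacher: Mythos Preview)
Your proof is correct and follows essentially the same approach as the paper: both insert the resolution of identity $\sum_i v^i(\,\cdot\,)v_i=\operatorname{id}$ (resp.\ for $\boldsymbol w$) to reduce each claim to a manifestly symmetric double sum, with the only difference being the order of presentation. Your closing parenthetical about deducing the trace property from independence via a ``$\psi$-twist'' is not quite right as stated (for general $\psi\in\operatorname{End}_A(V)$ this does not produce a coordinate system), but since you give the direct argument anyway this is harmless.
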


\begin{proof}
    Let $ (\boldsymbol{w } ,\mathcal{J})$ be another coordinate system. First, let us verify that
$ \sum _{i} w ^{j}( v _{i}) v^{i} =w ^{j}$ \ for all $ j\in \mathcal{J}$. But this holds because for all $ v\in V$ we have $ w ^{j}( v) =w ^{j}\left(\sum _{i} v^{i}( v) v _{i}\right) =\left(\sum _{i} w ^{j}( v _{i}) v^{i}\right)( v)$.
We have for all $ j\in \mathcal{J}$ that $ \sum _{i\in \mathcal{I}} w ^{j}( v _{i}) w _{j} =v_{i}$ and from what from what has been said we deduce
$ \sum _{i} v ^{i}( \varphi ( v _{i})) =\sum _{i,j} w ^{j}( v _{i}) v ^{i}( \varphi ( w _{j})) =\sum _{j} w ^{j}( \varphi ( w _{j})) .$
The trace property is easy to verify
$ \operatorname{trace}_{V}( \varphi \psi ) =\sum _{i} v ^{i}( \psi ( \varphi ( v _{i}))) =\sum _{i,i'} v ^{i'}( \varphi ( v _{i})) v ^{i}( \psi ( v_{i'})) =\operatorname{trace}_{V}( \psi \varphi ) .$
\end{proof}

\begin{proposition} \label{prop:trace}
    Let $ V$ be a finitely generated projective $ A$-module with coordinate system $ (\boldsymbol{v } ,\mathcal{I})$, $ \nabla \in \operatorname{Conn}_{L}( V)$ and $ \varphi \in \operatorname{End}_{A}( V) ,\ X\in \operatorname{Der}_{A}( A)$. Then $ X\left(\operatorname{trace}_{V}( \varphi )\right) =\operatorname{trace}_{V}( \nabla _{X} \varphi )$. In other words, $ \nabla \left(\operatorname{trace}_{V}\right) =0$ for the $ A$-module morphism $ \operatorname{trace}_{V} :\operatorname{End}_{A}( V)\rightarrow A$.
\end{proposition}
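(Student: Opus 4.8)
The plan is to reduce the statement to the flat $\operatorname{Der}(A)$-connection $\mathrm d$ on $\operatorname{End}_A(V)$ from Theorem~\ref{thm:canflat} and exploit the fact that any two $L$-connections differ by an $\operatorname{End}_A(V)$-valued one-form. First I would treat the case $L=\operatorname{Der}(A)$ and $\nabla=\mathrm d$: unwinding the definition of $\mathrm d$ on $\operatorname{End}_A(V)$ and the definition of $\operatorname{trace}_V$, one has $\operatorname{trace}_V(\mathrm d_X\varphi)=\sum_{i\in\mathcal I} v^i\bigl(X(\varphi)(v_i)\bigr)$, and applying $X$ directly to $\operatorname{trace}_V(\varphi)=\sum_i v^i(\varphi(v_i))$ using the Leibniz rule for derivations on $A$ should give the same thing once one accounts for how $X$ acts through the coordinate data; the matrix identity~\eqref{eq:matLeib} together with the coordinate-independence of $\operatorname{trace}_V$ from the preceding Lemma is exactly what makes these two expressions agree. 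Concretely, picking a dual pair $(\boldsymbol v,\mathcal I)$ realizes $V$ as a direct summand of $A^{\mathcal I}$, and $\operatorname{trace}_V$ is the restriction of the matrix trace, for which $X(\operatorname{trace} M)=\operatorname{trace}(X(M))$ is immediate from~\eqref{eq:matLeib} (take $N=\mathrm{id}$, or just sum the diagonal).

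Next I would pass to an arbitrary $\nabla\in\operatorname{Conn}_L(V)$. When $L=\operatorname{Der}(A)$, write $\nabla_X=\mathrm d_X+\eta(X)$ with $\eta\in\operatorname{Hom}_A(L,\operatorname{End}_A(V))$, which is possible since $\operatorname{Conn}_L(V)$ is affine over $\operatorname{Hom}_A(L,A)\otimes_A\operatorname{End}_A(V)$. Then $\operatorname{trace}_V(\nabla_X\varphi)=\operatorname{trace}_V(\mathrm d_X\varphi)+\operatorname{trace}_V\bigl([\eta(X),\varphi]\bigr)$ because $\nabla_X$ acting on $\operatorname{End}_A(V)$ is $\mathrm d_X+[\eta(X),-\,]$ (this is the induced connection on the endomorphism module: $(\nabla_X\varphi)(v)=\nabla^V_X(\varphi v)-\varphi(\nabla^V_X v)$, and the $\eta$-parts assemble into the commutator). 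The commutator term vanishes by the trace property $\operatorname{trace}_V(\psi\varphi)=\operatorname{trace}_V(\varphi\psi)$ already established, so $\operatorname{trace}_V(\nabla_X\varphi)=\operatorname{trace}_V(\mathrm d_X\varphi)=X(\operatorname{trace}_V(\varphi))$ by the first step. For a general Lie-Rinehart algebra $(L,A)$ one does not need $L=\operatorname{Der}(A)$: the statement as written already fixes $X\in\operatorname{Der}_A(A)$, and $\nabla_X$ for such an $X$ only uses the anchor, so the identity $\operatorname{trace}_V(\nabla_X\varphi)=X(\operatorname{trace}_V(\varphi))$ is a pointwise-in-$X$ assertion that follows from the $\operatorname{Der}(A)$ case applied after composing $\nabla$ with the anchor, or directly by the same commutator argument relative to $\mathrm d$.

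The last sentence, $\nabla(\operatorname{trace}_V)=0$, is then just a restatement: $\operatorname{trace}_V\in\operatorname{Hom}_A(\operatorname{End}_A(V),A)$ carries the induced $L$-connection, and by definition $(\nabla_X\operatorname{trace}_V)(\varphi)=X(\operatorname{trace}_V(\varphi))-\operatorname{trace}_V(\nabla_X\varphi)$, which we have just shown is zero for all $X$ and $\varphi$; hence $\nabla(\operatorname{trace}_V)=0$ as an element of $\operatorname{Hom}_A(L,A)\otimes_A\operatorname{End}_A(V)^\lor$, or more precisely $\operatorname{trace}_V$ is a flat (parallel) section. The only genuinely delicate point is the very first step — verifying that $X$ applied to $\operatorname{trace}_V(\varphi)$ really does reproduce $\operatorname{trace}_V(\mathrm d_X\varphi)$ — since $\mathrm d$ on $\operatorname{End}_A(V)$ is defined via a free resolution and a lift $\varphi^{(0)}$, and one must check that the diagonal sum $\sum_i v^i(X(\varphi^{(0)})(v_i))$ coincides with $X\bigl(\sum_i v^i(\varphi(v_i))\bigr)$ independently of the chosen lift; the coordinate-independence of $\operatorname{trace}_V$ and identity~\eqref{eq:matLeib} are precisely the tools that dispatch this, but it is where the care is needed. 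Everything after that is formal.
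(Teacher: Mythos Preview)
Your strategy --- prove the identity for one reference connection and then reduce the general case via the trace of a commutator --- is sound and genuinely different from the paper's direct computation. But there is a type confusion in your second step. The connection $\mathrm d$ of Theorem~\ref{thm:canflat} lives on $\operatorname{End}_A(V)$, not on $V$, so the sentence ``write $\nabla_X=\mathrm d_X+\eta(X)$ with $\eta\in\operatorname{Hom}_A(L,\operatorname{End}_A(V))$'' is ill-typed: $\nabla\in\operatorname{Conn}_L(V)$ whereas $\mathrm d\in\operatorname{Conn}_{\operatorname{Der}(A)}(\operatorname{End}_A(V))$. What you actually need is that the \emph{induced} connection on $\operatorname{End}_A(V)$ coming from $\nabla$ differs from $\mathrm d$ by an \emph{inner} derivation $[\eta(X),-]$. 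That is true, but it is not automatic: the difference of two connections on $\operatorname{End}_A(V)$ is a priori an $\operatorname{End}_A(\operatorname{End}_A(V))$-valued one-form, and you would have to argue why it has commutator shape. The cleanest fix is to bypass $\mathrm d$ and take the Fedosov connection $\nabla^\theta\in\operatorname{Conn}_{\operatorname{Der}(A)}(V)$ of Section~\ref{subsec:Fedosov} as your reference: then $\nabla=\nabla^\theta+\eta$ is a genuine equation between connections on $V$, the induced connections on $\operatorname{End}_A(V)$ differ by exactly $[\eta(X),-]$, and your commutator argument goes through.

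Your first step is also not quite ``immediate'' from \eqref{eq:matLeib}. With the idempotent $\theta$ realising $V\subset A^{\mathcal I}$ and $M=\theta M\theta$ the matrix of $\varphi$, one finds $\operatorname{trace}_V(\nabla^\theta_X\varphi)=\operatorname{tr}(X(M)\theta)$ while $X(\operatorname{trace}_V(\varphi))=\operatorname{tr}(X(M))$; the discrepancy $\operatorname{tr}(X(M)(1-\theta))=\operatorname{tr}(MX(\theta))$ vanishes only after invoking $\theta X(\theta)\theta=0$ (obtained by differentiating $\theta^2=\theta$) together with $M=\theta M\theta$ and cyclicity of the matrix trace. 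This is the actual content of the step and should be written out.

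By contrast, the paper works directly with the given $\nabla$: it differentiates the identity endomorphism $\operatorname{id}=\sum_i v_i\otimes v^i$ to see that the sum $\Gamma^j_i(X)+\Xi^j_i(X)$ of the connection one-forms on $V$ and on $V^\lor$ lies in the span of the syzygies of $(v_i)$ and $(v^i)$, and then expands $X(\operatorname{trace}_V(\varphi))$ via the Leibniz rule for $\nabla$ and kills the two extra terms using that syzygy relation. The paper's route avoids any reference connection but pays with explicit syzygy bookkeeping; your route, once repaired, is more conceptual and makes the independence of $\nabla$ manifest in one line, at the cost of importing $\nabla^\theta$ from Section~\ref{subsec:Fedosov}.
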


\begin{proof}
    Recall that under the canonical $ A$-linear isomorphism $ \operatorname{End}_{A}( V)\rightarrow V\otimes _{A} V^{*}$ the identity morphism $ \operatorname{id}$ maps to $ \sum _{i} v _{i} \otimes v ^{i}$. We have that for each $ X\in L$
\begin{align*}
0=\nabla _{X}\left(\operatorname{id}\right) =\nabla _{X}(\sum _{i} v _{i} \otimes v ^{i})=\sum _{i} \nabla _{X} v _{i} \otimes v ^{i} +\sum _{i} v _{i} \otimes \nabla _{X} v ^{i}
=\sum _{i,j}\left( \Gamma _{i}^{j}( X) +\Xi_{i}^{j}( X)\right) v _{j} \otimes v ^{i} ,
\end{align*}
where $ \nabla _{X} v _{i} =\sum _{j} \Gamma _{i}^{j}( X) v _{j}$ and $ \nabla _{X} v ^{j} =\sum _{i} \Xi _{i}^{j}( X) v ^{i}$, with connection $ 1$-forms $ \Gamma _{i}^{j} ,\Xi_{i}^{j} \in \operatorname{Hom}_{A}( L,A)$ for $ i,j\in \mathcal{I}$. This means that (see, e.g., \cite[Section IV.4]{Lombardi})
\begin{align}\label{eq:syzrel}
\Xi ^{j}_{i}( X) +\Gamma ^{j}_ {i}( X) =\sum _{\mu } B_{i}^{\mu } s_{\mu }^{j} +\sum _{\nu } C^{j}_{\nu } t^{\nu }_{i}
\end{align}
for some $ B_{i}^{\mu } ,C^{j}_{\nu } \in A$, where $ s_{\mu }^{i}$ and $ t_{j}^{\nu }$ are the syzygies for $ ( v _{i})_{i\in \mathcal{I}}$ and $ \left( v ^{i}\right)_{i\in \mathcal{I}}$, respectively. In other words, for each $ \mu $ we have $ \sum _{i} s_{\mu }^{i} v _{i} =0$ and for each $ \nu $ we have $ \sum _{j} t_{j}^{\nu } v ^{j} =0$. 

Now
$$ X\left(\operatorname{trace}_{V}( \varphi )\right) =X\left(\sum _{i} v ^{i}( \varphi ( v _{i}))\right) =\sum _{i}\left( \nabla _{X} v ^{i}\right)( \varphi ( v _{i})) +\underbrace{\sum _{i} v ^{i}(( \nabla _{X} \varphi )( v _{i}))}_{=\operatorname{trace}_{V}( \nabla _{X} \varphi )} +\sum _{i} v ^{i}( \varphi ( \nabla _{X} v _{i}))$$
holds and we have to show that $ \sum _{i}\left(\left( \nabla _{X} v ^{i}\right)( \varphi ( v _{i})) +\sum _{i} v ^{i}( \varphi ( \nabla _{X} v _{i}))\right)$ vanishes. But using Equation \eqref{eq:syzrel}
\begin{align*}
&\sum _{i}\left(\left( \nabla _{X} v ^{i}\right)( \varphi ( v _{i})) +\sum _{i} v ^{i}( \varphi ( \nabla _{X} v _{i}))\right) =\sum _{i,j}\left( \Xi_{j}^{i}( X) v ^{j}( \varphi ( v _{i})) +\Gamma _{j}^{i} v ^{j}( \varphi ( v _{i}))\right)\\
&=\sum _{i,j,k}\left( \varphi _{i}^{k} \Xi _{j}^{i}( X) v ^{j}( v _{k}) +\varphi _{i}^{k} \Gamma _{j}^{i} v ^{j}( v _{k})\right) =\sum _{i,j,k} \varphi _{i}^{k}\left( \Xi _{j}^{i}( X) +\Gamma _{j}^{i}( X)\right) v ^{j}( v _{k})\\
&=\sum _{i,j,k} \varphi _{i}^{k}\left( B_{j}^{\mu } s_{\mu }^{i} +C_{j}^{\nu }t^{i}_{\nu }\right) v ^{j}( v _{k}) =\sum _{i,j,k} \varphi _{i}^{k} B_{j}^{\mu } s_{\mu }^{i} v ^{j}( v _{k}) =0
\end{align*}
because $ 0=\sum _{i} \varphi \left( s_{\mu }^{i} v _{i}\right) =\sum _{i,k} s_{\mu }^{i} \varphi _{i}^{k} v _{k}$. Here we use the notation $ \varphi ( v _{i}) =\sum _{j} \varphi _{i}^{k} v _{k}$ with $ \varphi _{i}^{j} \in A$ being the matrix elements of $ \varphi $.
\end{proof}

\begin{proposition}
\label{prop:ups}
    Let $ \nabla $ and $ \nabla '$ be $ L$-connections on the $ A$-modules $ V,V'$ and $ \Upsilon \in \operatorname{Hom}_{A}( V,V')$ be covariantly constant, i.e., $ \nabla '\circ \Upsilon -\Upsilon \circ \nabla =0$. Then $ \Upsilon $ extends to the morphism 
$ \operatorname{id} \otimes \Upsilon :\operatorname{Alt}_{A}( L,A) \otimes _{\boldsymbol{k}} V\rightarrow \operatorname{Alt}_{A}( L,A) \otimes _{\boldsymbol{k}} V',$
which commutes with the covariant derivatives.
\end{proposition}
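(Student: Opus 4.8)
The plan is to unwind the definitions of the covariant derivative on the complexes $\operatorname{Alt}_A(L,V)$ and $\operatorname{Alt}_A(L,V')$, which are both given by the Koszul formula \eqref{eq:covder}, and to check termwise that $\operatorname{id}\otimes\Upsilon$ intertwines them using the hypothesis $\nabla'\circ\Upsilon=\Upsilon\circ\nabla$. First I would make precise what $\operatorname{id}\otimes\Upsilon$ means: under the identification $\operatorname{Alt}_A^m(L,V)\cong\operatorname{Alt}_A^m(L,A)\otimes_{\boldsymbol k}V$ (for $V$ arbitrary this identification is not an isomorphism but there is always a natural map, and the construction of $\operatorname{id}\otimes\Upsilon$ only uses that a $V$-valued alternating form $\omega$ is postcomposed with $\Upsilon$), the map sends $\omega\mapsto\Upsilon\circ\omega$, i.e. $(\operatorname{id}\otimes\Upsilon)(\omega)(X_1,\dots,X_m)=\Upsilon(\omega(X_1,\dots,X_m))$. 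This is manifestly $A$-linear and alternating, so it is a well-defined morphism of the graded $A$-modules.

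Next I would verify the commutation $\nabla'\circ(\operatorname{id}\otimes\Upsilon)=(\operatorname{id}\otimes\Upsilon)\circ\nabla$ on an arbitrary $\omega\in\operatorname{Alt}_A^m(L,V)$ by evaluating both sides on $X_0,\dots,X_m\in L$ and comparing the two pieces of the Koszul formula \eqref{eq:covder}. For the first sum, the $i$-th term on the left is $(-1)^i\nabla'_{X_i}\bigl(\Upsilon(\omega(X_0,\dots,\widehat{X_i},\dots,X_m))\bigr)$, which by the covariant-constancy hypothesis applied to the element $\omega(X_0,\dots,\widehat{X_i},\dots,X_m)\in V$ equals $(-1)^i\Upsilon\bigl(\nabla_{X_i}(\omega(X_0,\dots,\widehat{X_i},\dots,X_m))\bigr)$, which is $\Upsilon$ applied to the $i$-th term of the first sum in $\nabla\omega$. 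For the second (bracket) sum, every term is of the form $(-1)^{i+j}\Upsilon\bigl(\omega([X_i,X_j],X_0,\dots,\widehat{X_i},\dots,\widehat{X_j},\dots,X_m)\bigr)$ on both sides, since $\Upsilon$ just pulls through. Summing, $\nabla'((\operatorname{id}\otimes\Upsilon)\omega)=(\operatorname{id}\otimes\Upsilon)(\nabla\omega)$, as desired. In low degrees ($m=0$, giving $\nabla'(\Upsilon v)=\Upsilon(\nabla v)$, and $m=1$) this reduces exactly to the given hypothesis and its consequences, providing a sanity check.

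The only genuine subtlety — and the step I would flag as the main obstacle — is bookkeeping around the fact that when $V$ (or $L$) is not projective or reflexive, $\operatorname{Alt}_A^m(L,V)$ need not coincide with $\operatorname{Alt}_A^m(L,A)\otimes_{\boldsymbol k}V$, so one must be careful that "$\operatorname{id}\otimes\Upsilon$" is interpreted throughout as postcomposition with $\Upsilon$ on $V$-valued forms rather than as a literal tensor-product map on an external tensor factor; with that reading every manipulation above is legitimate with no finiteness hypothesis. Beyond this the argument is a routine termwise comparison, so I would present it compactly, stressing that the single input is the identity $\nabla'_X\circ\Upsilon=\Upsilon\circ\nabla_X$ for all $X\in L$ (equivalently $\nabla'\circ\Upsilon-\Upsilon\circ\nabla=0$ as stated) and that it is applied once per term of the first Koszul sum while the bracket sum is automatic.
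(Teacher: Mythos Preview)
Your proof is correct and follows exactly the same approach as the paper: expand both sides via the Koszul formula \eqref{eq:covder}, use $\nabla'_{X}\circ\Upsilon=\Upsilon\circ\nabla_{X}$ on each term of the first sum, and observe that $\Upsilon$ passes through the bracket sum untouched. Your additional remark about interpreting $\operatorname{id}\otimes\Upsilon$ as postcomposition with $\Upsilon$ (rather than a literal tensor map) is a useful clarification that the paper leaves implicit.
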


\begin{proof}
For $ $$ X_{0} ,X_{1} ,\dotsc ,X_{m} \in L$ and $ \omega \in \operatorname{Alt}_{A}( L,A) \otimes _{\boldsymbol{k}} V$ we check
\begin{align*}
&\left( \nabla '\left(\left(\operatorname{id} \otimes \Upsilon \right) \omega \right)\right)( X_{0} ,X_{1} ,\dotsc ,X_{m})\\
&=\sum _{l}( -1)^{l} \nabla '_{X_{l}}( \Upsilon ( \omega ))\left( X_{0} ,\dotsc ,\widehat{X_{l}} ,\dotsc ,X_{m}\right)\\
&\ \ \ \ \ \ \ \ \ \ \ \ \ \ \ \ \ \ \ \ \ \ \ \ \ \ \ \ \ \ \ \ \ \ \ \ \ \ +\sum _{k< l}( -1)^{k+l} \Upsilon ( \omega )\left([ X_{k} ,X_{l}],X_0 ,\dotsc ,\widehat{X_{k}} ,\dotsc ,\widehat{X_{l}} ,\dotsc ,X_{m}\right)\\
&=\sum _{l}( -1)^{l}( \Upsilon ( \nabla _{X_{l}} \omega ))\left( X_{0} ,\dotsc ,\widehat{X_{l}} ,\dotsc ,X_{m}\right)\\
&\ \ \ \ \ \ \ \ \ \ \ \ \ \ \ \ \ \ \ \ \ \ \ \ \ \ \ \ \ \ \ \ \ \ \ \ \ \ +\sum _{k< l}( -1)^{k+l} \Upsilon ( \omega )\left([ X_{k} ,X_{l}] ,X_0,\dotsc ,\widehat{X_{k}} ,\dotsc ,\widehat{X_{l}} ,\dotsc ,X_{m}\right)\\
&=\left(\left(\operatorname{id} \otimes \Upsilon \right) \nabla \omega \right)( X_{0} ,X_{1} ,\dotsc ,X_{m}) .
\end{align*}
\end{proof}

We may substitute the curvature endomorphism $\mathcal{R}$ of an $L$-connection $\nabla $ into a polynomial $f\in \boldsymbol{k}[ x]$. If $L$ is a finitely generated $A$-module then $\left(\operatorname{Alt}_{A}( L,A) ,\ \cup \right)$ is a nilpotent algebra and we could actually work more generally with a formal power series, such as $f( x) =\exp( x) \in \boldsymbol{k} \llbracket x\rrbracket $.

\begin{theorem}
\label{thm:chern}
    Let $\nabla ,\ \nabla ^{\eta } =\nabla +\eta $ be $L$-connections on the $A$-module $V$ with $\eta \in \operatorname{Hom}_{A}\left( L,\operatorname{End}_{A}( V)\right)$ and let $\mathcal{R}, \ \mathcal{R}^\eta$ the corresponding curvature endomorphisms. Then for each $f\in \boldsymbol{k}[ x]$ the expression $\operatorname{trace}_{V}( f(\mathcal{R}))$ defines a $d_{\operatorname{dR}}$-closed element in $\operatorname{Alt}_{A}\left( L,\operatorname{End}_{A}( V)\right)$. Moreover, we have 
$$\operatorname{trace}_{V}\left( f\left(\mathcal{R}^{\eta }\right)\right) -\operatorname{trace}_{V}( f(\mathcal{R})) =d_{\operatorname{dR}}\int _{0}^{1}\left(\operatorname{trace}_{V}\left( \eta\cup f'\left( \mathcal{R}^{s}\right)\right)\right) ds$$
so that the naive de Rham cohomology class $\left[\operatorname{trace}_{V}( f(\mathcal{R}))\right] \in \operatorname{H}_{\operatorname{dR}}( L,A)$ is independent of the choice of $\nabla $.
\end{theorem}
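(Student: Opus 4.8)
The plan is to carry out the classical Chern--Weil argument inside the naive de Rham complex $(\operatorname{Alt}_A(L,A),d_{\operatorname{dR}})$, the essential inputs being: $\operatorname{trace}_V$ is covariantly constant (Proposition~\ref{prop:trace}); covariantly constant morphisms commute with covariant derivatives (Proposition~\ref{prop:ups}); $\nabla$ is a graded derivation of $\cup$ on $\operatorname{Alt}_A(L,\operatorname{End}_A(V))$ with $\nabla^2=\mathcal R\cup(-)$ and the Bianchi identity $\nabla\mathcal R=0$ (Theorem~\ref{thm:bianchi}, together with the multiplicativity \eqref{eq:multconn}); and the transformation rule $\mathcal R^\eta=\mathcal R+\nabla\eta+\tfrac12[\eta,\eta]$ (Proposition~\ref{prop:Reta}). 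Throughout write $\nabla^s:=\nabla+s\eta$ and let $\mathcal R^s$ denote its curvature endomorphism, so $\mathcal R^0=\mathcal R$ and $\mathcal R^1=\mathcal R^\eta$; by Proposition~\ref{prop:Reta} (base $\nabla$, increment $s\eta$) the form $\mathcal R^s=\mathcal R+s\nabla\eta+\tfrac{s^2}{2}[\eta,\eta]$ is a polynomial in $s$ of degree $\le 2$.

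The first step is to record the \emph{fundamental identity} $d_{\operatorname{dR}}\circ\operatorname{trace}_V=\operatorname{trace}_V\circ\nabla$ as maps $\operatorname{Alt}_A(L,\operatorname{End}_A(V))\to\operatorname{Alt}_A(L,A)$, valid for every connection in $\operatorname{Conn}_L(V)$. Indeed, Proposition~\ref{prop:ups} applied to the covariantly constant morphism $\Upsilon=\operatorname{trace}_V\colon\operatorname{End}_A(V)\to A$ shows that $\operatorname{id}\otimes\operatorname{trace}_V$ commutes with covariant derivatives, and the covariant derivative on $\operatorname{Alt}_A(L,A)$ induced by the canonical connection $a\mapsto\alpha^{\lor}(\mathrm d a)\otimes1$ on $A$ equals $d_{\operatorname{dR}}$, which is immediate from \eqref{eq:covder} and $\nabla_Xa=X(a)$. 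I would also note the cyclicity $\operatorname{trace}_V(\omega\cup\omega')=(-1)^{|\omega||\omega'|}\operatorname{trace}_V(\omega'\cup\omega)$ for $\operatorname{End}_A(V)$-valued naive forms, a consequence of the pointwise $\operatorname{trace}_V(\varphi\psi)=\operatorname{trace}_V(\psi\varphi)$ and the shuffle signs; because $\mathcal R^s$ and all its cup powers have even degree, every sign occurring below is $+1$.

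Closedness is then immediate: writing $f(\mathcal R)=\sum_k a_k\,\mathcal R^{\cup k}$ (with $\mathcal R^{\cup 0}=\operatorname{id}$, and $\nabla\operatorname{id}=0$), the derivation property gives $\nabla(\mathcal R^{\cup k})=\sum_{j=0}^{k-1}\mathcal R^{\cup j}\cup(\nabla\mathcal R)\cup\mathcal R^{\cup(k-1-j)}=0$ by $\nabla\mathcal R=0$, hence $\nabla f(\mathcal R)=0$ and $d_{\operatorname{dR}}\operatorname{trace}_V(f(\mathcal R))=\operatorname{trace}_V(\nabla f(\mathcal R))=0$. For the transgression formula, applying Proposition~\ref{prop:Reta} with base connection $\nabla^s$ and increment $t\eta$ and differentiating at $t=0$ gives $\tfrac{d}{ds}\mathcal R^s=\nabla^s\eta$. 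Differentiating $\operatorname{trace}_V(f(\mathcal R^s))$ term by term and collapsing the result with cyclicity yields
\[
\tfrac{d}{ds}\operatorname{trace}_V\!\big(f(\mathcal R^s)\big)=\operatorname{trace}_V\!\big(f'(\mathcal R^s)\cup\tfrac{d}{ds}\mathcal R^s\big)=\operatorname{trace}_V\!\big(f'(\mathcal R^s)\cup\nabla^s\eta\big).
\]
Now $\nabla^s f'(\mathcal R^s)=0$ by the Bianchi identity $\nabla^s\mathcal R^s=0$ and the derivation property, so $\nabla^s\big(f'(\mathcal R^s)\cup\eta\big)=f'(\mathcal R^s)\cup\nabla^s\eta$; applying the fundamental identity for $\nabla^s$ and cyclicity once more, the displayed quantity equals $d_{\operatorname{dR}}\operatorname{trace}_V\!\big(\eta\cup f'(\mathcal R^s)\big)$. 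Since $\mathcal R^s$ is polynomial in $s$, so is $\operatorname{trace}_V(\eta\cup f'(\mathcal R^s))$, and $\int_0^1(-)\,ds$ is the purely formal integration of polynomials (legitimate over a field of characteristic zero), which commutes with the $\boldsymbol k$-linear operator $d_{\operatorname{dR}}$. Integrating from $0$ to $1$ therefore gives $\operatorname{trace}_V(f(\mathcal R^\eta))-\operatorname{trace}_V(f(\mathcal R))=d_{\operatorname{dR}}\int_0^1\operatorname{trace}_V(\eta\cup f'(\mathcal R^s))\,ds$; since any two connections on $V$ differ by some $\eta\in\operatorname{Hom}_A(L,\operatorname{End}_A(V))$, the class $[\operatorname{trace}_V(f(\mathcal R))]\in\mathrm H_{\operatorname{dR}}(L,A)$ is independent of $\nabla$.

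The argument is routine modulo three points that deserve attention: verifying that the covariant derivative induced on $A$-valued naive forms by the canonical connection on $A$ is genuinely $d_{\operatorname{dR}}$; keeping track of the graded signs in the cyclicity of $\operatorname{trace}_V$ (harmless here, since everything in sight has even degree); and observing that the $s$-integral is a formal device — no analysis or convergence is involved — which is why the statement survives over an arbitrary field of characteristic zero. If one wishes to allow a formal power series $f$ (possible, as remarked before the theorem, when $L$ is finitely generated so that $\operatorname{Alt}_A(L,A)$ is nilpotent), the same proof applies verbatim, all sums being finite in each fixed cup-degree.
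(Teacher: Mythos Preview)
Your proof is correct and follows essentially the same Chern--Weil transgression argument as the paper: both use Propositions~\ref{prop:trace} and~\ref{prop:ups} to identify $d_{\operatorname{dR}}\circ\operatorname{trace}_V$ with $\operatorname{trace}_V\circ\nabla$, deduce closedness from the Bianchi identity, compute $\tfrac{d}{ds}\mathcal R^s=\nabla^s\eta$, and integrate the resulting exact expression. The only differences are cosmetic---you phrase things via the derivation property and cyclicity of the trace in general, while the paper verifies the key step on monomials $f(x)=x^m$ directly; and you are more explicit that the $s$-integral is a formal operation on polynomials in $s$, which the paper leaves implicit.
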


\begin{proof}
First observe that using Propositions \ref{prop:trace} and \ref{prop:ups} and the second Bianchi identity (cf. Theorem \ref{thm:bianchi}) we deduce
$$d_{\operatorname{dR}}\left(\operatorname{trace}_{V}( f(\mathcal{R}))\right) =\operatorname{trace}_{V}\left(\left[\operatorname{\nabla } ,f(\mathcal{R})\right]\right) =\operatorname{trace}_{V}\left(\operatorname{\nabla }\mathcal{R} \cup f'(\mathcal{R})\right) =0.$$
Let us introduce a variable $s$ and consider the $L$-connection $\nabla ^{s} =\nabla +s\eta \in \operatorname{Conn}_{L}( V)[ s]$ with curvature endomorphism $\mathcal{R}^{s} \in \operatorname{Alt}_{A}\left( L,\operatorname{End}_{A}( V)\right)[ s]$. Then for $s=1$ we have $\mathcal{R}^{s} =\mathcal{R}^{\eta }.$  Note that 
$$\frac{d\left(\mathcal{R}^{s}\right)}{ds} =\frac{d\left( \nabla {^{s}}^{2}\right)}{ds} =\left[ \nabla ^{s} ,\frac{d\nabla ^{s}}{ds}\right] =\left[ \nabla ^{s} ,\eta \right].$$
We would like to show
$\frac{d}{ds}\operatorname{trace}_{V}\left( f\left(\mathcal{R}^{s}\right)\right) =d_{\operatorname{dR}}\left(\operatorname{trace}_{V}\left( \eta \cup f'\left(\mathcal{R}^{s}\right)\right)\right)$.
It is sufficient to verify this for the monomial $f( x) =x^{m}$
\begin{align*}
\frac{d\operatorname{trace}_{V}\left(\left(\mathcal{R}^{s}\right)^{\cup m}\right)}{ds} &=\sum _{j=0}^{m}\operatorname{trace}_{V}\left(\left(\mathcal{R}^{s}\right)^{\cup j}\frac{d\left(\mathcal{R}^{s}\right)}{ds}\left(\mathcal{R}^{s}\right)^{\cup m-j-1}\right) =m\operatorname{trace}_{V}\left(\frac{d\left(\mathcal{R}^{s}\right)}{ds}\left(\mathcal{R}^{s}\right)^{\cup m-1}\right)\\
&=m\operatorname{trace}_{V}\left(\left[ \nabla ^{s} ,\eta \right]\left(\mathcal{R}^{s}\right)^{\cup m-1}\right) =m\operatorname{trace}_{V}\left([ \nabla ^{s} ,\eta\cup \left(\mathcal{R}^{s}\right)^{\cup m-1}]\right)\\
&\stackrel{\text{\footnotesize Prop. \ref{prop:trace} and \ref{prop:ups}}}{=} md_{\operatorname{dR}}\left(\operatorname{trace}_{V}\left(\eta\cup(\mathcal{ R}{^{s}})^{\cup m-1}\right)\right)
\end{align*}
Integrating we deduce
$$\operatorname{trace}_{V}\left( f\left(\mathcal{R}^{\eta }\right)\right) -\operatorname{trace}_{V}( f(\mathcal{R})) =d_{\operatorname{dR}}\int _{0}^{1}\left(\operatorname{trace}_{V}\left( \eta\cup f'\left( (\mathcal{R}^{s})\right)\right)\right) ds.$$
\end{proof}

Let $ ( L,A)$ be a Lie-Rinehart algebra and assume that $ L$ is a finitely generated $ A$-module. Suppose that $ V$ is a finitely generated projective $ A$-module. Then one can define for $\kappa\in \boldsymbol{k}^\times$ a \textit{Chern character} of $ V$ as
$$ \operatorname{ch}_\kappa( V) :=\left[\operatorname{trace}_{V}(\exp(\kappa\mathcal{ R}))\right] \in \operatorname{H}_{\operatorname{dR}}( L,A),$$ 
where $ \mathcal{R}$ is the curvature endomorphism of some $ \nabla \in \operatorname{Conn}_{L}( V)$. 
In differential geometry people use the normalization $\kappa=1/(2\pi\sqrt{-1})\in \mathbb{C}^\times $.
The following result is due to Hideki Ozeki \cite{Ozeki}.

\begin{corollary}
Suppose that $ V,V'$ are finitely generated projective $ A$-modules and $L$ is finitely generated. Then $ \operatorname{ch}_\kappa( V\oplus V') =\operatorname{ch}_\kappa( V) + \operatorname{ch}_\kappa( V'), \ \operatorname{ch}_\kappa( V\otimes_A V') =\operatorname{ch}_\kappa( V) \cup \operatorname{ch}_\kappa( V')\in\operatorname{H}_{\operatorname{dR}}(L,A)$.
\end{corollary}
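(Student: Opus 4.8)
The plan is to use the fact, established in Theorem~\ref{thm:chern}, that $\operatorname{ch}_\kappa$ does not depend on the chosen connection, so that we may compute $\operatorname{ch}_\kappa(V\oplus V')$ and $\operatorname{ch}_\kappa(V\otimes_A V')$ with connections adapted to these decompositions and thereby reduce the corollary to two purely algebraic identities: the trace over a direct sum splits as a sum of traces, and the trace over a tensor product is the product of the traces. Before that one records that $V\oplus V'$ and $V\otimes_A V'$ are again finitely generated projective, so that $\operatorname{ch}_\kappa$ is defined for them, and that the finite generation of $L$ makes $(\operatorname{Alt}_A(L,A),\cup)$ nilpotent, so that all the exponential series occurring below are finite sums and the identity $\exp(a+b)=\exp(a)\cup\exp(b)$ for commuting elements makes literal sense.

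For the direct sum I would fix $\nabla\in\operatorname{Conn}_L(V)$, $\nabla'\in\operatorname{Conn}_L(V')$ with curvature endomorphisms $\mathcal{R}$, $\mathcal{R}'$ and use the connection $\nabla\oplus\nabla'$ on $V\oplus V'$, whose curvature is the block-diagonal form $\mathcal{R}\oplus\mathcal{R}'\in\operatorname{Alt}_A^2(L,\operatorname{End}_A(V)\oplus\operatorname{End}_A(V'))\subseteq\operatorname{Alt}_A^2(L,\operatorname{End}_A(V\oplus V'))$ (cf.\ the discussion following Theorem~\ref{thm:bianchi}). Since $\operatorname{End}_A(V)\oplus\operatorname{End}_A(V')$ is a subalgebra of $\operatorname{End}_A(V\oplus V')$ — the composite of a $V$-endomorphism with a $V'$-endomorphism vanishes — the cup product of block-diagonal forms stays block-diagonal, so $(\mathcal{R}\oplus\mathcal{R}')^{\cup m}=\mathcal{R}^{\cup m}\oplus\mathcal{R}'^{\cup m}$ and hence $\exp(\kappa(\mathcal{R}\oplus\mathcal{R}'))=\exp(\kappa\mathcal{R})\oplus\exp(\kappa\mathcal{R}')$. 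I would then note that concatenating a coordinate system $(\boldsymbol{v},\mathcal{I})$ of $V$ and a coordinate system $(\boldsymbol{w},\mathcal{J})$ of $V'$ (extending each covector by zero on the other summand) is a coordinate system of $V\oplus V'$, from which $\operatorname{trace}_{V\oplus V'}(\varphi\oplus\varphi')=\operatorname{trace}_V(\varphi)+\operatorname{trace}_{V'}(\varphi')$ follows immediately; extending this $A$-linearly to $\operatorname{Alt}$-valued forms and passing to naive de Rham classes yields $\operatorname{ch}_\kappa(V\oplus V')=\operatorname{ch}_\kappa(V)+\operatorname{ch}_\kappa(V')$ by Theorem~\ref{thm:chern}.

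For the tensor product I would use the connection $\nabla\otimes\operatorname{id}+\operatorname{id}\otimes\nabla'$ on $V\otimes_A V'$, whose curvature endomorphism is $\mathcal{R}\otimes\operatorname{id}+\operatorname{id}\otimes\mathcal{R}'$. The crucial observation is that the two summands commute in $(\operatorname{Alt}_A(L,\operatorname{End}_A(V\otimes_A V')),\cup)$: they have even form-degree, so supercommutativity of $\cup$ applies, and $(\phi\otimes\operatorname{id})\circ(\operatorname{id}\otimes\psi')=\phi\otimes\psi'=(\operatorname{id}\otimes\psi')\circ(\phi\otimes\operatorname{id})$ at the level of endomorphisms. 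Consequently $\exp(\kappa(\mathcal{R}\otimes\operatorname{id}+\operatorname{id}\otimes\mathcal{R}'))=\exp(\kappa\mathcal{R}\otimes\operatorname{id})\cup\exp(\operatorname{id}\otimes\kappa\mathcal{R}')$, and since $(\omega_1\otimes\operatorname{id})\cup(\omega_2\otimes\operatorname{id})=(\omega_1\cup\omega_2)\otimes\operatorname{id}$ one gets $\exp(\kappa\mathcal{R}\otimes\operatorname{id})=\exp(\kappa\mathcal{R})\otimes\operatorname{id}$ and likewise for the other factor, so that $\exp(\kappa(\mathcal{R}\otimes\operatorname{id}+\operatorname{id}\otimes\mathcal{R}'))$ equals the external product $\exp(\kappa\mathcal{R})\otimes\exp(\kappa\mathcal{R}'):=(\exp(\kappa\mathcal{R})\otimes\operatorname{id})\cup(\operatorname{id}\otimes\exp(\kappa\mathcal{R}'))$. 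Using the product coordinate system $(v_i\otimes w_j)_{(i,j)\in\mathcal{I}\times\mathcal{J}}$ together with the covectors $(v,v')\mapsto v^i(v)w^j(v')$ one checks $\operatorname{trace}_{V\otimes_A V'}(\phi\otimes\psi)=\operatorname{trace}_V(\phi)\operatorname{trace}_{V'}(\psi)$; evaluating this on the shuffle expansion of an external product of forms gives $\operatorname{trace}_{V\otimes_A V'}(\omega\otimes\omega')=\operatorname{trace}_V(\omega)\cup\operatorname{trace}_{V'}(\omega')$, and applying it to $\omega=\exp(\kappa\mathcal{R})$, $\omega'=\exp(\kappa\mathcal{R}')$ and then taking classes (using that $\cup$ descends to $\operatorname{H}_{\operatorname{dR}}(L,A)$ and invoking Theorem~\ref{thm:chern}) gives $\operatorname{ch}_\kappa(V\otimes_A V')=\operatorname{ch}_\kappa(V)\cup\operatorname{ch}_\kappa(V')$.

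The genuinely routine parts are the two displayed trace identities, which are immediate from the chosen coordinate systems. The step I expect to need the most care is the factorization of the exponential of $\mathcal{R}\otimes\operatorname{id}+\operatorname{id}\otimes\mathcal{R}'$: it relies simultaneously on the commutativity of even-degree forms under $\cup$, on the vanishing of the mixed compositions of endomorphisms, and — for the exponential series and the relation $\exp(a+b)=\exp(a)\cup\exp(b)$ to be meaningful — on the nilpotency of $\operatorname{Alt}_A(L,A)$, which is precisely where the hypothesis that $L$ is finitely generated is used.
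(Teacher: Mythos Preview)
Your proposal is correct and is exactly the argument the paper has in mind: the corollary is stated without proof, as an immediate consequence of Theorem~\ref{thm:chern} (independence of $\operatorname{ch}_\kappa$ from the choice of connection) together with the curvature formulas $\mathcal{R}^1+\mathcal{R}^2$ for $\nabla^1+\nabla^2$ and $\mathcal{R}^1\otimes\operatorname{id}+\operatorname{id}\otimes\mathcal{R}^2$ for $\nabla^1\otimes\operatorname{id}+\operatorname{id}\otimes\nabla^2$ recorded after Theorem~\ref{thm:bianchi}. You have simply spelled out the routine trace and exponential identities that the paper leaves implicit.
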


\section{Examples of flat connections} \label{sec:flatex}

\subsection{Adjoint representation}\label{subsec:adjoint}
Let $(L,A)$ be a Lie-Rinehart algebra where $L$ is a finitely generated $A$-module.
We associate to a choice of a system $\boldsymbol{\xi }$ of generators $\xi _{1} ,\dotsc ,\xi _{m}$ of $L$ an $L$-connection
$\nabla {^{\boldsymbol{\xi }}}^{\ }$on $L$ by putting $\nabla _{X}^{\boldsymbol{\xi }} \xi _{i} =[ X,\xi _{i}]$ for $i=1,\dotsc ,m$. The structure functions $c_{ij}^{k}$ of $L$ are at the same time the Christoffel symbols of the connection:
$\nabla _{\xi _{i}}^{\boldsymbol{\xi }} \xi _{j} =[ \xi _{i} ,\xi _{j}] =\sum _{k} c_{ij}^{k} \xi _{k} .$
From Jacobi's identity we have that
$$\left[ \nabla _{X}^{\boldsymbol{\xi }} ,\nabla _{Y}^{\boldsymbol{\xi }}\right] \xi _{i} =[ X,[ Y,\xi _{i}]] -[ Y,[ X,\xi _{i}]] =\nabla _{[ X,Y]}^{\boldsymbol{\xi }} \xi _{i}$$
and conclude that $\nabla ^{\boldsymbol{\xi }}$ is flat. Expressed in terms of the generators the flatness can be rewritten as
\begin{align}\label{eqn:cMC}
([ \nabla^{\boldsymbol{\xi }}_{\xi _{i}} ,\nabla^{\boldsymbol{\xi }}_{\xi _{j}}] -\nabla^{\boldsymbol{\xi }}_{[ \xi _{i} ,\xi _{j}]}) \xi _{m} =\sum_l\left(\xi _{i}\left( c_{jm}^{l}\right) -\xi _{j}\left( c_{im}^{l}\right)\right)\xi _{l} +\sum _{k}\left( c_{ik}^{l} c_{jm}^{k} -c_{jk}^{l} c_{im}^{k}\right) \xi _{l} -\sum _{k} c_{ij}^{k} c_{km}^{l} \xi _{l}=0 .
\end{align}
Note that $\mathfrak{gl}_{m}\left(\operatorname{C}_{\operatorname{dR}}( L,A)\right)$ 
forms a dg Lie $\boldsymbol{k}$-algebra with differential $d_{\operatorname{dR}}$ and super Lie bracket $[\ ,\ ]$ made from $\cup$ and the commutator of matrices.
Let $c\in \mathfrak{gl}_{m}\left(\operatorname{C}_{\operatorname{dR}}^{1}( L,A)\right)$ be the form that sends $\xi _{i}$ to the matrix $c_{i} :=\left[ c_{ij}^{k}\right]_{j,k=1,\dotsc ,m} \in \mathfrak{gl}_{m}( A)$.

Inspecting Equation \eqref{eqn:cMC} we deduce the following.

\begin{theorem}
The flatness of $\nabla ^{\boldsymbol{\xi }}$ is equivalent to the vanishing of $d_{\operatorname{dR}} c-[ c,c] \in \mathfrak{gl}_{m}\left(\operatorname{C}_{\operatorname{dR}}^{2}( L,A)\right)$.
\end{theorem}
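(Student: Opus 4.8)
The plan is to translate Equation \eqref{eqn:cMC}, which is the flatness condition written out in terms of the generating frame $\boldsymbol{\xi}$, into a single matrix-valued equation in the dg Lie algebra $\mathfrak{gl}_m\!\left(\operatorname{C}_{\operatorname{dR}}(L,A)\right)$. First I would make precise the dictionary between the three kinds of terms appearing in \eqref{eqn:cMC} and the components of $d_{\operatorname{dR}}c$ and $[c,c]$ evaluated on the pair $(\xi_i,\xi_j)$: the term $\sum_l\bigl(\xi_i(c_{jm}^l)-\xi_j(c_{im}^l)\bigr)\xi_l$ is exactly the $(j,l)$-entry of $(d_{\operatorname{dR}}c)(\xi_i,\xi_j)$ coming from the derivation part of the Koszul differential applied entrywise; the term $\sum_k\bigl(c_{ik}^lc_{jm}^k-c_{jk}^lc_{im}^k\bigr)\xi_l$ is the $(j,l)$-entry of the matrix commutator $[c(\xi_i),c(\xi_j)]$, i.e. the $\cup$-cum-commutator bracket evaluated on $(\xi_i,\xi_j)$; and the term $-\sum_k c_{ij}^k c_{km}^l\xi_l$ is precisely $-c\bigl([\xi_i,\xi_j]\bigr)$, the bracket-correction part of $d_{\operatorname{dR}}c$ on $(\xi_i,\xi_j)$. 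Collecting these, the vanishing of the left side of \eqref{eqn:cMC} for all $i,j,m$ is equivalent to $\bigl(d_{\operatorname{dR}}c - [c,c]\bigr)(\xi_i,\xi_j)=0$ as a matrix in $\mathfrak{gl}_m(A)$ for all $i,j$.

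Second, I would argue that this entrywise/frame-wise vanishing is equivalent to the vanishing of $d_{\operatorname{dR}}c-[c,c]$ as an element of $\mathfrak{gl}_m\!\left(\operatorname{C}_{\operatorname{dR}}^2(L,A)\right)$. One direction is trivial. For the converse, note that $d_{\operatorname{dR}}c-[c,c]$ is by construction an element of $\operatorname{Alt}^2_A(L,A)$ in each matrix slot, hence $A$-bilinear and alternating; since $\xi_1,\dots,\xi_m$ generate $L$ as an $A$-module, an $A$-bilinear alternating form that vanishes on all pairs of generators vanishes identically. This is exactly where finite generation of $L$ is used, and it is the only place the hypothesis enters. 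Conversely, Theorem referenced just above (flatness of $\nabla^{\boldsymbol{\xi}}$) together with the computation in \eqref{eqn:cMC} shows that when $\nabla^{\boldsymbol{\xi}}$ is flat the frame-wise expressions vanish, so the equivalence is genuinely two-way: $\nabla^{\boldsymbol{\xi}}$ flat $\iff$ \eqref{eqn:cMC} holds for all $i,j,m$ $\iff$ $d_{\operatorname{dR}}c-[c,c]=0$.

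Third, there is one bookkeeping subtlety I would be careful about: the sign and normalization conventions relating the super Lie bracket $[\ ,\ ]$ on $\mathfrak{gl}_m(\operatorname{C}_{\operatorname{dR}}(L,A))$ — which is built from $\cup$ and the matrix commutator — to the naive matrix commutator $[c(\xi_i),c(\xi_j)]$ appearing after evaluation on a pair. For $c$ of degree one, $[c,c](\xi_i,\xi_j)$ unwinds via the shuffle formula for $\cup$ to $c(\xi_i)c(\xi_j)-c(\xi_j)c(\xi_i)$ up to the factor the paper has fixed, and I would simply check that this matches the two middle sums in \eqref{eqn:cMC} with the stated coefficient (hence the "$[c,c]$" rather than "$\frac12[c,c]$" in the statement). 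This is the only potential pitfall; it is purely a matter of tracking the shuffle signs and the degree-one antisymmetrization factor, and once the conventions of Section \ref{sec:LR} are pinned down it is routine.

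I expect the main (mild) obstacle to be precisely this sign/convention matching in the bracket term — making sure the three sums in \eqref{eqn:cMC} are grouped correctly into "$d_{\operatorname{dR}}$ of $c$" versus "$[c,c]$" with the exact constants used elsewhere in the paper — rather than anything conceptually deep. Everything else is the observation that a frame-wise identity among $A$-multilinear alternating forms, over a finitely generated module, is equivalent to the corresponding identity of forms, combined with reading off \eqref{eqn:cMC}.
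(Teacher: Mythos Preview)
Your proposal is correct and follows exactly the paper's approach: the paper's proof is literally the one-line remark ``Inspecting Equation \eqref{eqn:cMC} we deduce the following,'' and you have spelled out that inspection in detail. Your care with the sign/normalization of $[c,c]$ is well placed (with the paper's matrix conventions one finds the middle sum in \eqref{eqn:cMC} equals $[c(\xi_i),c(\xi_j)]$, so the statement is using the convention $[c,c](\xi_i,\xi_j)=[c(\xi_i),c(\xi_j)]$), and the minor slip ``$(j,l)$-entry'' should read ``$(m,l)$-entry.''
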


In the case when $A=P/I$ is an affine $\boldsymbol{k}$-algebra and $L=\operatorname{Der}(A)$ we would like to discuss the torsion tensor $T_{\nabla ^{\boldsymbol{\xi }}}$ of $\nabla ^{\boldsymbol{\xi }}$. To this end we pick representatives $\sum _{j} \xi _{i}^{j} \partial /\partial x^{j} \in \operatorname{Der}_{I}( P) ,\ \xi _{i}^{j} \in P,$ such that $\xi _{i} =\sum _{j} \xi _{i}^{j} \partial /\partial x^{j} +I\operatorname{Der}( P)$. We leave it to the reader to check that
 $$T_{\nabla ^{\boldsymbol{\xi }}}( X,Y) =\sum _{i,j}\left( Y^{i} X( \xi _{i}^{j}) -X^{i} Y( \xi _{i}^{j})\right)\frac{\partial }{\partial x^{j}}+I\operatorname{Der}( P)$$
where $X=\sum _{i} X^{i} \partial /\partial x^{i} +I\operatorname{Der}( P) ,\ Y=\sum _{i} Y^{i} \partial /\partial x^{i} +I\operatorname{Der}( P) \ \in \operatorname{Der}_{I}( P) /I\operatorname{Der}( P) .$

\subsection{Bott connection}
Here we investigate the Bott connection on the conormal module of an affine algebra $P/I$, where $P=\boldsymbol{k}\left[ x^{1} ,\dotsc ,x^{n}\right]$ and $I=( f_{1} ,\dotsc ,f_{\ell }) \subseteq P$ is an ideal.
The \emph{Bott connection} $\nabla ^{\mathrm{B}}$ on $I/I^{2}$ is given by the formula 
$$\nabla _{X}^{\mathrm{B}}\left( f+I^{2}\right) =\widehat{X}( f) +I^{2}$$ 
for $f\in I$ and a representative $\widehat{X}\in \operatorname{Der}_I( P)$ of $X\in \operatorname{Der}( P/I)\simeq \operatorname{Der}_I( P)/I\operatorname{Der}( P)$. It follows from
$[ \nabla _{X} ,\nabla _{Y}]\left( f+I^{2}\right) =\widehat{X}(\widehat{Y}( f)) -\widehat{Y}( \widehat{X}( f)) +I^{2} =\nabla _{[ X,Y]}\left( f+I^{2}\right)$
for $X,Y\in \operatorname{Der}( P/I) ,\ f\in I$ that $\nabla ^{\mathrm{B}}$ is flat. By Vascolcelos' Theorem (see \cite{Matsumura}) $I/I^2$ is a projective $P/I$-module if and only if $I$ is the ideal of a local complete intersection. Moreover, $I/I^2$ is a free $P/I$-module with basis $f_{1}+I^2,\dotsc ,f_{\ell }+I^2$ if and only if the Koszul complex on $f_{1} ,\dotsc ,f_{\ell }$ is acyclic. 

We would like to analyse what this means in terms of generators of $I$ and $\operatorname{Der}( P/I)$. 
Let $X_{1} ,\dotsc ,X_{m} \in \operatorname{Der}_{I}( P)$ be generators of the $P$-module $\operatorname{Der}_{I}( P)$ of derivations of $P$ tangent to $I$. Since $\operatorname{Der}_{I}( P)$ is closed under the Lie bracket there are structure functions $c_{ij}^{k} \in P$ such that $[ X_{i} ,X_{j}] =\sum _{k} c_{ij}^{k} X_{k}$. We have Jacobi's identity 
$$\sum _{l} c_{ij}^{l} c_{lk}^{m} +\operatorname{cyclic}( i,j,k) =0$$ 
for $i,j,k=1,\dots,\ell$ since $\left(\operatorname{Der}_{I}( P) ,P\right)$
forms a sub Lie-Rinehart algebra of $\left(\operatorname{Der}_{I}( P) ,P\right)$. 
Associated to the choice of generators $f_{1} ,\dotsc ,f_{\ell }$ there are functions $\Gamma _{i\mu }^{\nu } \in P$, $i=1,\dotsc ,\ell ,\ \mu ,\nu =1,\dotsc ,m$, such that
$X_{i}( f_{\mu }) =\sum _{\nu } \Gamma _{i\mu }^{\nu } f_{\nu }$.
The Bott connection can be understood as
$$\nabla _{X_{i}}^{\mathrm{B}}\left( f_{\mu } +I^{2}\right) =\sum _{\nu } \Gamma _{i\mu }^{\nu } f_{\nu } +I^{2} .$$
We refer to the Christoffel symbols $\Gamma _{i\mu }^{\nu } +I\in P/I$ \ of the Bott connection as the \textit{Bottoffel symbols}. From
$$\nabla^{\mathrm{B}}_{X_{i}}\left( \nabla^{\mathrm{B}}_{X_{j}}\left( f_{\mu } +I^{2}\right)\right) =X_{i}\left(\sum _{\nu } \Gamma _{j\mu }^{\nu } f_{\nu }\right) +I^{2} =\sum _{\nu } X_{i}\left( \Gamma _{j\mu }^{\nu }\right) f_{\nu } +\sum _{\nu ,\lambda } \Gamma _{j\mu }^{\nu } \Gamma _{i\nu }^{\lambda } f_{\lambda } +I^{2}$$
we deduce that the zero curvature condition means
\begin{align}
\label{eq:Bottflat}
&([ \nabla^{\mathrm{B}}_{X_{i}} ,\nabla^{\mathrm{B}}_{X_{j}}] -\nabla^{\mathrm{B}}_{[ X_{i} ,X_{j}]})\left( f_{\mu } +I^{2}\right)\\
\nonumber & \ \ \ \ \ \ \ =\sum _{\nu }\left( X_{i}( \Gamma _{j\mu }^{\nu }) -X_{j}( \Gamma _{i\mu }^{\nu })\right) f_{\nu } +\sum _{\nu ,\lambda }\left( \Gamma _{j\mu }^{\nu } \Gamma _{i\nu }^{\lambda } -\Gamma _{i\mu }^{\nu } \Gamma _{j\nu }^{\lambda }\right) f_{\lambda } -\sum _{k,\nu } c_{ij}^{k} \Gamma _{k\mu }^{\nu } f_{\nu } +I^{2} \subseteq I^{2}
\end{align}
for $i,j=1,\dotsc ,\ell ,\ \mu =1,\dotsc ,m$.

We would like to reformulate this in terms of the homological algebra of the de Rham complex
$\operatorname{C_{\operatorname{dR}}}( P/I) =\operatorname{Alt}_{P/I}\left(\operatorname{Der}( P/I) ,P/I\right)$. Note that $\left(\operatorname{C_{\operatorname{dR}}}( P/I) ,d_{dR} ,\cup \right)$ forms a supercommutative dg algebra. Consequently, the space of naive $\mathfrak{gl}_{\ell }$-valued forms $\left(\mathfrak{gl}_{\ell }\left(\operatorname{C_{\operatorname{dR}}}( P/I)\right) ,\mathrm{d} ,\ [ \ ,\ ]\right)$ carries the structure of a dg Lie $\boldsymbol{k}$-algebra. Let $\Gamma \in \mathfrak{gl}_{\ell}\left(\operatorname{C_{\operatorname{dR}}^{1}}( P/I)\right)$ be the naive $\mathfrak{gl}_{\ell}$-valued one-form that sends
$\xi _{i} +I\operatorname{Der}( P)$ to $\left[ \Gamma _{i\mu }^{\nu } +I\right]_{\mu ,\nu =1,\dotsc ,\ell} \in \mathfrak{gl}_\ell( P/I)$. 

\begin{theorem}
The flatness of $\nabla^{\mathrm B}$ is equivalent to the vanishing of
$d_{dR} \Gamma -[ \Gamma ,\Gamma ] \in \mathfrak{gl}_{\ell}\left(\operatorname{C_{\operatorname{dR}}^{2}}( P/I)\right)$.
\end{theorem}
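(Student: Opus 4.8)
The plan is to translate the curvature condition \eqref{eq:Bottflat} into the dg Lie algebra $\mathfrak{gl}_{\ell}\left(\operatorname{C_{\operatorname{dR}}}(P/I)\right)$ term by term, exactly as was done for the adjoint connection in Theorem 6.3 (the passage involving Equation \eqref{eqn:cMC}). First I would unwind the definition of the naive de Rham differential on the $\mathfrak{gl}_\ell$-valued one-form $\Gamma$: evaluating $d_{dR}\Gamma$ on a pair of generators $\xi_i+I\operatorname{Der}(P),\ \xi_j+I\operatorname{Der}(P)$ of $\operatorname{Der}(P/I)$ produces, by the Koszul formula, the matrix with $(\mu,\nu)$-entry $X_i(\Gamma_{j\mu}^\nu)-X_j(\Gamma_{i\mu}^\nu)-\sum_k c_{ij}^k\Gamma_{k\mu}^\nu + I$, i.e.\ precisely the first and third groups of terms on the right-hand side of \eqref{eq:Bottflat} (modulo $I$). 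Then I would compute $[\Gamma,\Gamma]$: since the bracket on $\mathfrak{gl}_\ell\left(\operatorname{C_{\operatorname{dR}}}(P/I)\right)$ is built from $\cup$ together with the matrix commutator, $[\Gamma,\Gamma]$ evaluated on $\xi_i,\xi_j$ gives the commutator of the matrices $\left[\Gamma_{i\mu}^\nu+I\right]$ and $\left[\Gamma_{j\mu}^\nu+I\right]$, whose $(\mu,\lambda)$-entry is $\sum_\nu\left(\Gamma_{i\mu}^\nu\Gamma_{j\nu}^\lambda-\Gamma_{j\mu}^\nu\Gamma_{i\nu}^\lambda\right)+I$, which matches (up to an overall sign convention) the remaining quadratic group of terms in \eqref{eq:Bottflat}.

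Assembling these, $\left(d_{dR}\Gamma-[\Gamma,\Gamma]\right)(\xi_i,\xi_j)$ is the reduction modulo $I$ of the matrix whose $(\mu,\nu)$-entry is the full expression appearing on the right of \eqref{eq:Bottflat}. The point of Theorem 6.5 (Vasconcelos) is that under the local complete intersection hypothesis $f_1+I^2,\dotsc,f_\ell+I^2$ is a basis of the free $P/I$-module $I/I^2$, so the curvature endomorphism $\mathcal R^{\mathrm B}(X_i,X_j)$, which acts on $I/I^2$ by the matrix $\left[\sum_\nu(\cdots)f_\nu + I^2\right]$ read off against that basis, vanishes if and only if each of these matrix entries lies in $I$ — equivalently, if and only if $d_{dR}\Gamma-[\Gamma,\Gamma]=0$ in $\mathfrak{gl}_\ell\left(\operatorname{C_{\operatorname{dR}}^2}(P/I)\right)$. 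Since the generators $\xi_i$ span $\operatorname{Der}(P/I)$ over $P/I$ and both sides are $(P/I)$-bilinear alternating, checking on the generators suffices.

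One subtlety I would be careful about is that \eqref{eq:Bottflat} is a statement about elements of $I^2$, i.e.\ about the coefficient vectors $\left(\sum_\nu(\cdots)f_\nu\right)$ lying in $I^2$ rather than merely the scalar coefficients lying in $I$; passing from the former to the latter is exactly where freeness of $I/I^2$ (hence $\ell$-independence of the $f_\nu+I^2$) is used, and I would state this dependence explicitly rather than leaving it implicit. The other bookkeeping point is matching the sign/factor conventions: the paper writes $d_{dR}c-[c,c]$ (not $d_{dR}c-\tfrac12[c,c]$) in Theorem 6.3, so I would verify that the same normalization of the bracket on $\mathfrak{gl}_\ell\left(\operatorname{C_{\operatorname{dR}}}(P/I)\right)$ is in force here and that the cross-terms combine with the correct coefficient; this is the only place where a routine but error-prone computation enters. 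I do not expect a genuine obstacle — the content is entirely parallel to the adjoint-representation case already treated — so the writeup can simply point to \eqref{eq:Bottflat}, note the term-by-term correspondence, and invoke Theorem 6.5 for the freeness needed to conclude.
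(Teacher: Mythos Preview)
Your term-by-term identification of $d_{\operatorname{dR}}\Gamma$ and $[\Gamma,\Gamma]$ with the pieces of \eqref{eq:Bottflat} is exactly the argument the paper intends; indeed the paper gives no separate proof, the theorem being read off directly from \eqref{eq:Bottflat} in parallel with the adjoint case. Your remark about checking the sign/factor normalization against the convention $d_{\operatorname{dR}}c-[c,c]$ used for the adjoint representation is also appropriate.

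The one place where your write-up diverges from the paper is the freeness hypothesis. You invoke what you call ``Theorem~6.5 (Vasconcelos)'' to conclude that $f_1+I^2,\dotsc,f_\ell+I^2$ form a basis of $I/I^2$. Two issues: first, the paper's theorem carries \emph{no} such hypothesis --- it is stated for an arbitrary ideal $I=(f_1,\dotsc,f_\ell)$; second, Vasconcelos' theorem (as the paper recalls it) says $I/I^2$ is \emph{projective} iff $I$ is locally a complete intersection, whereas freeness with basis $f_\nu+I^2$ is the separate condition that the Koszul complex on $f_1,\dotsc,f_\ell$ be acyclic. So you are both adding a hypothesis the paper omits and misidentifying which hypothesis yields what.

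That said, you have correctly located the only genuine subtlety. Without linear independence of the $f_\nu+I^2$ over $P/I$, only the implication ``Maurer--Cartan $\Rightarrow$ flat'' follows immediately from \eqref{eq:Bottflat}: vanishing of each matrix entry modulo $I$ forces $\sum_\nu(\cdots)f_\nu\in I^2$, but the converse --- extracting that each coefficient lies in $I$ from the combination lying in $I^2$ --- is precisely that independence. The paper's ``is equivalent to'' should therefore be read informally, as a reformulation of the zero-curvature condition in the dg Lie algebra (the same looseness occurs in the adjoint-representation theorem when $L$ is not free on the chosen $\xi_i$). If you want a literal equivalence, assume Koszul acyclicity; if you want to match the paper, drop the hypothesis and present the statement as the matrix reformulation of \eqref{eq:Bottflat}, noting that the matrix determines the endomorphism of $I/I^2$ but not conversely.
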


\subsection{Poisson connection on the conormal module of a Poisson ideal}
Here we recall the Poisson connection of \cite{Nambuffel, higherKoszul} on the conormal module of an affine Poisson $\boldsymbol{k}$-algebra. Let $\left( P=\boldsymbol{k}\left[ x^{1} ,\dotsc ,x^{n}\right] ,\ \{\ ,\ \}\right)$ be a Poisson $\boldsymbol{k}$-algebra and $I\subseteq P$ a Poisson ideal, i.e., $\{I,P\} \subseteq I$. Note that $P/I$ becomes a Poisson algebra with bracket $\{f+I,g+I\} =\{f,g\} +I$ for $f,g\in P$. We have a morphism of Lie algebras
$P\rightarrow \operatorname{Der}( P) ,\ f\mapsto X_{f} :=\{f,\ \}$ which descends to a morphism of Lie algebras
$P/I\rightarrow \operatorname{Der}( P/I) ,\ f+I\mapsto X_{f+I} :=\{f+I,\ \}$.
Accordingly, we have a Lie-Rinehart subalgebra $((P/I) X_{P/I} ,P/I)$ of $\left(\operatorname{Der}( P/I) ,P/I\right)$, where $(P/I) X_{P/I}$ denotes the $P/I$-submodule of $\operatorname{Der}(P/I)$ generated by the $X_{f+I}$, $f\in P$.

A $(P/I)X_{P/I}$-connection is referred to as a Poisson connection and the de Rham complex of $((P/I) X_{P/I} ,P/I)$
is called the complex of Poisson cohomology $\left(\operatorname{C}_{\operatorname{Poiss}}( P/I) ,d_{\operatorname{Poiss}}\right) =\operatorname{C}_{\operatorname{dR}}((P/I)X_{P/I} ,P/I) ,\ d_{\operatorname{dR}})$. The restriction of the Bott connection to $X_{P/I}$ gives rise to the Poisson connection
$$\nabla _{X_{f+I}}^{\operatorname{Poiss}}\left( g+I^{2}\right) =\{f,g\} +I^{2} ,\ f\in P,g\in I,$$
where$\ f\in P,g\in I$. This means that $\nabla ^{\operatorname{Poiss}}$ is flat. Let us pick a system of generators $f_{1} ,\dotsc ,f_{\ell }$ of $I$.
The classes $Z_{\mu }^{i\nu } +I\in P/I$ of the coefficients $Z_{\mu }^{i\nu }$ in$\left\{x^{i} ,f_{\mu }\right\} =\sum _{j} Z_{\mu }^{i\nu } f_{\nu }$ can be interpreted as Christoffel symbols of $\nabla ^{\operatorname{Poiss}}$ since $\nabla _{X_{x^{i} +I}}^{\operatorname{Poiss}}\left( f_{\mu } +I^{2}\right) =\left\{x^{i} ,f_{\mu }\right\} +I^{2}$. They are referred to as the \textit{Poissoffel symbols. }

We call $Z$ the $A$-linear map that sends $\mathrm{d} (x^{i}+I) \in \operatorname{Hom}_{P/I}( (P/I)X_{P/I} ,P/I)$ to the $\ell \times \ell $-matrix $Z^{i} :=\left[ Z_{\mu }^{i\nu } +I\right] \in \mathfrak{gl}_{\ell }( P/I)$. Note that $Z$ is an element of the dg Lie algebra $\mathfrak{gl}_{\ell }\left(\operatorname{C}_{\operatorname{Poiss}}( P/I)\right)$
of homological degree $1$. The flatness of $\nabla ^{\operatorname{Poiss}}$ is equivalent to the vanishing of $d_{\operatorname{Poiss}} Z-[ Z,Z] \in \mathfrak{gl}_{\ell }\left(\operatorname{C}_{\operatorname{Poiss}}^{2}( P/I)\right)$.

In \cite{Nambuffel} there has been presented a generalization of this to Nambu-Poisson algebras.

\subsection{The $I/I^2$-connection on 
the Kähler differentials
associated to a first class ideal $I$}

Let $( P,\{\ ,\ \})$ be a Poisson $\boldsymbol{k}$-algebra, where $P=\boldsymbol{k}[x^1,\dots,x^n]$, and put $\Pi^{ij}=\{x^i,x^j\}$. 
By a \textit{first class ideal}\footnote{Differential geometers use the term \emph{coisotropic} ideal and people working in D-modules talk of \emph{involutive} ideals. The idea of first class contraints, however, goes back to P.A.M. Dirac \cite{Dirac}.}
we mean an ideal $I\subseteq P$ such that $\{I,I\} \subseteq I$.
Choosing generators $f_1,\dots,f_\ell$ for $I$ the first class condition boils down to the existence of structure functions $c_{\mu\nu}^\lambda\in P$ such that $\{f_\mu,f_\nu\}=\sum_\lambda c_{\mu\nu}^\lambda f_\lambda$. An important special case of first class ideals is provided by ideals generated by the components of moment maps of Hamiltonian group actions (see, e.g., \cite{HSScompositio}).
One easily verifies that if $I$ is a first class ideal 
$\left( I/I^{2} ,A=P/I\right)$ forms a Lie-Rinehart algebra with bracket $[f+I^2,g+I^2]=\{f,g\}+I^2$ and anchor
$\alpha(f+I^2)(h+I)=\{f,h\}+I$ for $f,g\in I, h\in P$. 

We would like to construct an $I/I^{2}$-connection on $\Omega _{A|\boldsymbol{k}}$ depending on a set of generators $f_{1} ,\dotsc ,f_{\ell } \in P$ of the ideal $I$. In fact, this can be done by putting 
$$\nabla _{f_{\mu } +I^{2}}^{\mathrm{D}}(\mathrm{d}( g+I)):=\mathrm{d}(\{f_{\mu } ,g\} +I)$$
for $g\in P$ and $\mu =1,\dotsc ,\ell $. We refer to $\nabla ^{\mathrm{D}}$ as the \textit{Dirac connection} since the study of first class ideals goes back to the work of P.A.M. Dirac. Using Jacobi's identity for $\{\ ,\ \}$ we see from
\begin{align*}
\left[ \nabla _{f_{\mu } +I^{2}}^{\mathrm{D}} ,\nabla _{f_{\nu } +I^{2}}^{\mathrm{D}}\right](\mathrm{d}( g+I))& =\mathrm{d}(\{f_{\mu } ,\{f_{\nu } ,g\} -\{f_{\nu } ,\{f_{\mu } ,g\} +I)\\
\nabla _{\left[ f_{\mu } +I^{2} ,f_{\nu } +I^{2}\right]}^{\mathrm{D}}(\mathrm{d}( g+I))& =\mathrm{d}(\{\{f_{\mu } ,f_{\nu }\} ,g\} +I)
\end{align*}
that our $I/I^{2}$-connection is flat. From 
\begin{align*}
\nabla _{f_{\mu } +I^{2}}^{\mathrm{D}}\mathrm{d}\left( x^{i} +I\right) &=\sum _{k}\mathrm{d}\left( \Pi ^{ki}\frac{\partial f_{\mu }}{\partial x^{k}} +I\right) =\sum _{j,k}\left(\frac{\partial \Pi ^{ki}}{\partial x^{j}}\frac{\partial f_{\mu }}{\partial x^{k}} +\Pi ^{ji}\frac{\partial ^{2} f_{\mu }}{\partial x^{j} \partial x^{k}} +I\right)\mathrm{d}\left( x^{j} +I\right)\\
\nonumber &=:\sum _{j}\left( \Phi _{\mu j}^{i} +I\right)\mathrm{d}\left( x^{j} +I\right)
\end{align*}
we see that the Christoffel symbols of $\nabla ^{\mathrm{D}}$, which we refer to as \emph{Diraffel symbols}, turn out to be the classes $\Phi_{\mu j}^{i} +I\in P/I$ of 
$$\Phi _{\mu j}^{i} =\sum _{j,k}\left(\frac{\partial \Pi ^{ki}}{\partial x^{j}}\frac{\partial f_{\mu }}{\partial x^{k}} +\Pi ^{ji}\frac{\partial ^{2} f_{\mu }}{\partial x^{j} \partial x^{k}}\right)=\frac{\partial\{f_\mu,x^i\}}{\partial x^j}\in P.$$

The zero curvature condition can be expressed as
\begin{align}\label{eq:DirMC}
&\left(\left[ \nabla _{f_{\mu } +I^{2}}^{\mathrm{D}} ,\nabla _{f_{\nu } +I^{2}}^{\mathrm{D}}\right] -\nabla _{\left[ f_{\mu } +I^{2} ,f_{\mu } +I^{2}\right]}^{\mathrm{D}}\right)\mathrm{d}\left( x^{i} +I\right)\\
\nonumber &=\sum _{k}\left(\left\{f_{\mu } ,\Phi _{\nu k}^{i}\right\} -\left\{f_{\nu } ,\Phi _{\mu k}^{i}\right\} +\sum _{j}\left( \Phi _{\mu k}^{j} \Phi _{\nu j}^{i} -\Phi _{\nu k}^{j} \Phi _{\mu j}^{i}\right) -\sum _{\lambda } c_{\mu \nu }^{\lambda } \Phi _{\lambda k}^{i}\right)\mathrm{d}\left( x^{k} +I\right)=0
\end{align}
for all $\mu ,\nu =1,\dotsc ,\ell ,\ i=1,\dotsc ,n$. Let $\Phi $ be the element of $\mathfrak{gl}_{n}\left(\mathrm{C}_{\operatorname{dR}}^{1}\left( I/I^{2} ,P/I\ \right)\right)$
that sends $f_\mu +I^2$ to the matrix $\left[ \Phi _{\mu j}^{i}\right]_{i,j=1,\dotsc ,n}$.

\begin{theorem}
The flatness of $\nabla^{\mathrm D}$ is equivalent to the vanishing of $d_{\operatorname{dR}} \Phi +[ \Phi ,\Phi ] \in \mathfrak{gl}_{n}\left(\mathrm{C}_{\operatorname{dR}}^{2}\left( I/I^{2} ,\ P/I\right)\right)$.
\end{theorem}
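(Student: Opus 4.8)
The plan is to read this off Equation \eqref{eq:DirMC}, exactly as the analogous statements for the adjoint, Bott and Poisson connections were read off \eqref{eqn:cMC}, \eqref{eq:Bottflat} and the Poisson zero-curvature identity: the content is purely a translation of the already-computed zero-curvature condition into the dg Lie algebra $\mathfrak{gl}_n(\mathrm{C}_{\operatorname{dR}}(I/I^2,P/I))$.

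First I would reduce flatness of $\nabla^{\mathrm D}$ to a condition on generators. Flatness means the curvature endomorphism $\mathcal R^{\mathrm D}\in\operatorname{Alt}_A^2(I/I^2,\operatorname{End}_A(\Omega_{A|\boldsymbol{k}}))$ vanishes; recalling from Section~\ref{sec:curv} that $\mathcal R^{\mathrm D}$ is $A$-bilinear and antisymmetric, it vanishes if and only if $\mathcal R^{\mathrm D}(f_\mu+I^2,f_\nu+I^2)=0$ for all $\mu,\nu\in\{1,\dots,\ell\}$, since the $f_\mu+I^2$ generate $I/I^2$ over $A=P/I$. In turn each endomorphism $\mathcal R^{\mathrm D}(f_\mu+I^2,f_\nu+I^2)$ of $\Omega_{A|\boldsymbol{k}}$ vanishes if and only if it annihilates every generator $\mathrm d(x^i+I)$. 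Since Equation \eqref{eq:DirMC} computes exactly $\bigl(\mathcal R^{\mathrm D}(f_\mu+I^2,f_\nu+I^2)\bigr)\,\mathrm d(x^i+I)$, flatness of $\nabla^{\mathrm D}$ is equivalent to the vanishing of the bracketed expression on the right of \eqref{eq:DirMC} for all $\mu,\nu,i$. Because $\nabla^{\mathrm D}$ is defined intrinsically, namely $\nabla^{\mathrm D}_{f_\mu+I^2}\mathrm d(g+I)=\mathrm d(\{f_\mu,g\}+I)$, no syzygy subtlety enters here; only the elementary "vanishing on a generating set" implication is used.

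Next I would expand $d_{\operatorname{dR}}\Phi+[\Phi,\Phi]\in\mathfrak{gl}_n(\mathrm{C}_{\operatorname{dR}}^2(I/I^2,P/I))$ on the pair $(f_\mu+I^2,f_\nu+I^2)$ and compare with \eqref{eq:DirMC}. Applying the Koszul formula for $d_{\operatorname{dR}}$ entrywise gives $(d_{\operatorname{dR}}\Phi)(f_\mu+I^2,f_\nu+I^2)=\alpha(f_\mu+I^2)(\Phi(f_\nu+I^2))-\alpha(f_\nu+I^2)(\Phi(f_\mu+I^2))-\Phi([f_\mu+I^2,f_\nu+I^2])$; since the anchor is $\alpha(f_\mu+I^2)=\{f_\mu,\ \}$ on $P/I$, well defined modulo $I$ precisely because $I$ is first class ($\{I,I\}\subseteq I$), the first two terms contribute $\{f_\mu,\Phi^i_{\nu k}\}-\{f_\nu,\Phi^i_{\mu k}\}$, while $[f_\mu+I^2,f_\nu+I^2]=\sum_\lambda c^\lambda_{\mu\nu}f_\lambda+I^2$ together with the $A$-linearity of $\Phi$ contributes the structure-function term $-\sum_\lambda c^\lambda_{\mu\nu}\Phi^i_{\lambda k}$. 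Expanding $[\Phi,\Phi]$ via the supercommutative product $\cup$ and the matrix commutator produces the remaining quadratic term $\sum_j(\Phi^j_{\mu k}\Phi^i_{\nu j}-\Phi^j_{\nu k}\Phi^i_{\mu j})$ of \eqref{eq:DirMC}; the sign and normalization appearing in the statement are exactly those that make this matching work, consistently with the dg Lie bracket on $\mathfrak{gl}_n(\mathrm{C}_{\operatorname{dR}})$ built from $\cup$ and the commutator of matrices. Summing the two contributions reproduces the bracketed expression of \eqref{eq:DirMC}, which finishes the argument.

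There is no genuine obstacle: the only thing requiring care — as in the proof of Theorem~\ref{thm:bianchi} and in the parallel statements for the adjoint, Bott and Poisson connections — is the bookkeeping of signs and of the index positions in $\Phi^i_{\mu j}=\partial\{f_\mu,x^i\}/\partial x^j$, together with systematically invoking $\{I,I\}\subseteq I$ so that each Poisson bracket and each structure function appearing descends to $P/I$. I would carry out the index matching once, at the level of the generators $f_\mu+I^2$ of $I/I^2$ and $\mathrm d(x^i+I)$ of $\Omega_{A|\boldsymbol{k}}$, and then appeal to $A$-bilinearity to conclude for the full tensors.
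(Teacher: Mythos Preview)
Your proposal is correct and follows exactly the paper's approach: the theorem is stated immediately after Equation \eqref{eq:DirMC} without a separate proof, so the intended argument is precisely the translation you describe --- reduce to generators of $I/I^{2}$ and of $\Omega_{A|\boldsymbol{k}}$, then identify the three groups of terms in \eqref{eq:DirMC} with the anchor, structure-function, and quadratic contributions to $d_{\operatorname{dR}}\Phi+[\Phi,\Phi]$. Your remarks on why no syzygy issue arises and on the role of the first-class condition $\{I,I\}\subseteq I$ are apt and make explicit what the paper leaves implicit.
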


\section{Flat connections and Lie-Rinehart modules} \label{sec:LRmod}

Let $V$ be an $A$-module. The \emph{Atiyah-Lie-Rinehart algebra} $(\mathcal{D}( V) ,A)$ is defined as follows. An element $D\in \operatorname{End}_{\boldsymbol{k}}( V)$ is called a \textit{derivation over }$X\in \operatorname{Der}( A)$ if for all $v\in V,\ a\in A$ the Leibniz rule
$$D( av) =X( a) v+aD( v)$$
holds. Let us put $\mathcal{D}( V) :=\left\{D\in \operatorname{End}_{\boldsymbol{k}}( V) \ \mid \exists X\in \operatorname{Der}( A) :\ D\ \text{is a derivation over } X\right\}$.
The commutator of $D_{1} ,D_{2} \in \mathcal{D}( V)$ is defined to be $[ D_{1} ,D_{2}]( v) :=D_{1}( D_{2}( v)) -D_{2}( D_{1}( v))$.
Obviously $(\mathcal{D}( V), [\ ,\ ])$ forms a Lie algebra. 

If $D\in \operatorname{End}_{\boldsymbol{k}}( V)$ is a derivation over $X,Y\in \operatorname{Der}( A)$ then for all $v\in V,\ a\in A$ the relation
$X( a) v=Y( a) v$ must hold. If $\operatorname{Ann}_{A}( V) =\{0\}$, then it follows that $X=Y$.
For the rest of this section we assume that $\operatorname{Ann}_{A}( V) =\{0\}$ so that there is a \textit{symbol map }$\sigma :\mathcal{D}( V)\rightarrow \operatorname{Der}( A) ,\ D\rightarrow X$, where $X$ is the unique derivation such that $D$ is a derivation over $X$. Note that the symbol map $\sigma$ is the anchor of the Lie-Rinehart algebra $(\mathcal{D}( V),A)$.
It is part of the exact sequence 
$$\operatorname{Der}( A)\xleftarrow{\sigma }\mathcal{D}( V)\leftarrow \operatorname{End}_{A}( V)\leftarrow 0$$
of Lie-Rinehart algebras over $A$.

\begin{definition}
Let $( L,A)$ be a Lie-Rinehart algebra and $V$ an $A$-module such that $\operatorname{Ann}_{A}( V) =\{0\}$.
By an \textit{$( L,A)$-module structure} on $V$ we mean a morphism $\nabla :L\rightarrow \mathcal{D}( V)$ of Lie-Rinehart algebras over $A$.
\end{definition}

\begin{proposition}\label{prop:AtiyaConn}
Let $( L,A)$ be a Lie-Rinehart algebra and $V$ an $A$-module such that $\operatorname{Ann}_{A}( V) =\{0\}$. Then an $(L,A)$-module $V$ is nothing but a flat $L$-connection $\nabla \in \operatorname{Conn}_{L}( V)$. More generally, an $L$-connection is the same thing as an $A$-linear morphism $L\rightarrow \mathcal{D}( V)$ compatible with the anchors.
\end{proposition}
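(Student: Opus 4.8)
The plan is pure definition-chasing, and I do not expect a genuine obstacle; below I mostly flag the two points that need a line of verification. First I would prove the ``more generally'' assertion: an $L$-connection on $V$ is exactly an $A$-linear map $\nabla\colon L\to\mathcal{D}(V)$ with $\sigma\circ\nabla=\alpha$, where $\alpha\colon L\to\operatorname{Der}(A)$ is the anchor of $(L,A)$ and $\sigma\colon\mathcal{D}(V)\to\operatorname{Der}(A)$ is the symbol map. Recall that an $L$-connection is, equivalently, an $A$-linear assignment $X\mapsto\nabla_X\in\operatorname{End}_{\boldsymbol{k}}(V)$ with $\nabla_X(av)=X(a)v+a\nabla_X(v)$ for all $a\in A$, $v\in V$, where $X(a):=\alpha(X)(a)$. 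That identity is precisely the statement that $\nabla_X$ is a derivation over $\alpha(X)$ in the sense of Section~\ref{sec:LRmod}; using $\operatorname{Ann}_A(V)=\{0\}$ the symbol of $\nabla_X$ is well defined and equals $\alpha(X)$, so $X\mapsto\nabla_X$ is an $A$-linear map $L\to\mathcal{D}(V)$ with $\sigma\circ\nabla=\alpha$. Conversely, given an $A$-linear $\nabla\colon L\to\mathcal{D}(V)$ with $\sigma\circ\nabla=\alpha$, each $\nabla_X:=\nabla(X)$ is a derivation over $\alpha(X)$, whence $\nabla_X(av)=\alpha(X)(a)v+a\nabla_X(v)=X(a)v+a\nabla_X(v)$, and $A$-linearity gives $\nabla_{aX}=a\nabla_X$; so we recover an $L$-connection. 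These constructions are visibly mutually inverse. This is the only place the hypothesis $\operatorname{Ann}_A(V)=\{0\}$ enters, solely to make the symbol of $\nabla_X$ unique.

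Second I would match flatness with bracket-compatibility. As already noted in Section~\ref{sec:LRmod}, $\mathcal{D}(V)$ is a Lie algebra and $\sigma$ is a Lie algebra morphism; equivalently, a one-line Leibniz computation shows that the commutator of a derivation over $X_1$ with one over $X_2$ is a derivation over $[X_1,X_2]$. Hence for $\nabla$ as above and $X,Y\in L$, both $[\nabla_X,\nabla_Y]$ and $\nabla_{[X,Y]}$ lie in $\mathcal{D}(V)$ with the same symbol $\alpha([X,Y])$, so the curvature value $\mathcal{R}(X,Y)=[\nabla_X,\nabla_Y]-\nabla_{[X,Y]}$ lies in $\ker\sigma=\operatorname{End}_A(V)$, consistent with the proposition of Section~\ref{sec:curv}. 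Then $\nabla$ is flat, i.e.\ $\mathcal{R}=0$, if and only if $\nabla_{[X,Y]}=[\nabla_X,\nabla_Y]$ for all $X,Y\in L$, which is exactly the condition that the $A$-linear, anchor-compatible map $\nabla\colon L\to\mathcal{D}(V)$ also respect brackets.

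Combining the two steps, a flat $L$-connection on $V$ is the same as an $A$-linear map $L\to\mathcal{D}(V)$ compatible with both anchors and brackets, i.e.\ a morphism of Lie-Rinehart algebras over $A$ in the sense of Section~\ref{sec:LR}, which is precisely an $(L,A)$-module structure on $V$ as defined just above. The only real computation in the whole chain is the Leibniz identity exhibiting $[\nabla_X,\nabla_Y]$ as a derivation over $[X,Y]$, which I would dispatch in one line, together with the remark on well-definedness of the symbol that makes the annihilator hypothesis necessary.
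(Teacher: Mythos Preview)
Your proposal is correct and follows essentially the same approach as the paper, which dispatches the argument in two sentences by noting that flatness is exactly bracket-compatibility of $X\mapsto\nabla_X$ and that a Lie-Rinehart morphism forces $\sigma(\nabla_X)=\alpha(X)$. You have simply unpacked the definition-chase more carefully, in particular isolating where the hypothesis $\operatorname{Ann}_A(V)=\{0\}$ is used and why $\mathcal{R}(X,Y)$ lands in $\operatorname{End}_A(V)$; nothing is missing or superfluous.
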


\begin{proof}

Let $\nabla $ be an $L$-connection. The flatness of $\nabla $ means nothing but that the map $\nabla :L\rightarrow \mathcal{D}( V) ,\ X\mapsto \nabla _{X}$ is compatible with the brackets. Conversely, if $\nabla :L\rightarrow \mathcal{D}( V) ,\ X\mapsto \nabla _{X}$ is a map of Lie-Rinehart algebras then we must also have $X=\sigma ( \nabla _{X})$.
\end{proof} 

The definition of an $L$-connection as in the Proposition \ref{prop:AtiyaConn} on a general $A$-module $V$ already appeared in the work of Hideki Ozeki \cite{Ozeki}.

If $V$ is a Lie-Rinehart module over $( L,A)$ encoded by the flat connection $\nabla \in \operatorname{Conn}_{L}( V)$ then the associated covariant derivative $d_{\operatorname{dR}} =\nabla $ (cf. Equation \eqref{eq:covder}) makes $\operatorname{Alt}_{A}( L,V)$ into a left dg-module over the dg algebra $\operatorname{Alt}_{A}\left( L,\operatorname{End}_{A}( V)\right)$. The complex $\left(\operatorname{C}_{\operatorname{dR}}( L,V) :=\operatorname{Alt}_{A}( L,V) ,d_{\operatorname{dR}}\right)$ is referred to as the naive de Rham complex of the Lie-Rinehart module $V$. George S. Rinehart \cite{Rinehart} studied the case when $L$ is projective. It turns out that if $V$ is projective 
then $V$ is Lie-Rinehart module precisely if $V$ is a left module of the universal enveloping algebra
$U( L,A)$ of $( L,A)$ (for details see \cite{Rinehart}). Among other things he showed that the cohomology of $\operatorname{C}_{\operatorname{dR}}( L,V)$ computes $\operatorname{Ext}_{U( L,A)}( V,A)$. We do not know of any results that prove related claims for non-projective $L$. Let us mention two important special cases. Assume that $V$ is projective and $A$ is smooth. Then $V$ is a $(\operatorname{Der}(A),A)$-module if it is the module of sections of a flat vector bundle over the  smooth variety $\operatorname{Spec}(A)$ and $(\operatorname{C}_{\operatorname{dR}}( L,V),d_{\operatorname{dR}})$ is its algebraic de Rham complex. The second example concerns the case when $A=\boldsymbol{k}$. Then  $(\operatorname{C}_{\operatorname{dR}}( L,V),d_{\operatorname{dR}})$ is known as the cochain complex of Lie algebra cohomology of the module $V$ over the $\boldsymbol{k}$-Lie algebra $L$.

\section{Fedosov connection associated to an idempotent}\label{sec:Fed}
\label{subsec:Fedosov}
Every finitely generated projective module $V$ has an associated connection $\nabla^\theta$ that depends on the idempotent $\theta$ used to define $V$. We learned about it from Boris Fedosov's book \cite{Fedosov} where the idea has been used to present the Levi-Civita connection on a closed submanifold of $\mathbb{R}^{d}$ endowed with the embedding metric.
Let $ V=\theta A^m$ be a finitely generated projective $A$-module defined via the idempotent $ \theta=\theta^{2} \in \operatorname{End}_A\left( A^{m}\right)$.
Then the connection is defined by $ \nabla ^{\theta}:=\theta\mathrm{d} \theta:\Omega_{A|\boldsymbol{k}}\otimes_A A^m\to A^m$ so that for $ v\in V$ we have $ \nabla ^{\theta} v=\theta\mathrm{d} v$. 

\begin{proposition}
    The curvature tensor $ \mathcal{R}^{\theta}\in\operatorname{Alt}_A^2(\operatorname{Der}(A),\operatorname{End}_A(V))$ of $ \nabla ^{\theta} :V\rightarrow \operatorname{Hom}_A\left(\operatorname{Der}( A) ,A\right) \otimes _{\boldsymbol{k}} V$ is given by the formula
$$ v\mapsto\mathcal{R}^{\theta}( X,Y) v=\theta[ X( \theta) ,Y( \theta)] v,$$ where $ X,Y\in \operatorname{Der}( A)$ are supposed to act on the matrix $ \theta$ entrywise (cf. Equation \eqref{eq:matLeib}).
\end{proposition}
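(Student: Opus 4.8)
The plan is to compute the curvature endomorphism $\mathcal{R}^{\theta}(X,Y) = [\nabla^{\theta}_X,\nabla^{\theta}_Y] - \nabla^{\theta}_{[X,Y]}$ directly by unwinding the definition $\nabla^{\theta}_X v = \theta\, X(v)$ for $v\in V = \theta A^m$, where $X\in\operatorname{Der}(A)$ acts entrywise on column vectors and matrices. The key preliminary observations are the two identities that follow from $\theta^2 = \theta$: differentiating this relation entrywise with a derivation $X$ gives $X(\theta)\theta + \theta X(\theta) = X(\theta)$, equivalently $\theta X(\theta)\theta = 0$ and $\theta X(\theta) = X(\theta)(1-\theta)$ (and symmetrically $(1-\theta)X(\theta) = X(\theta)\theta$). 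Also, for $v\in V$ we have $\theta v = v$, hence $X(\theta)v + \theta X(v) = X(v)$, i.e. $(1-\theta)X(v) = X(\theta)v$ and $\theta X(v) = X(v) - X(\theta)v$.

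First I would compute $\nabla^{\theta}_X\nabla^{\theta}_Y v = \theta\, X\bigl(\theta\, Y(v)\bigr) = \theta X(\theta) Y(v) + \theta X(Y(v))$, using the entrywise Leibniz rule \eqref{eq:matLeib} for the matrix–vector product. Using $\theta X(\theta) = X(\theta)(1-\theta)$ and the identity $(1-\theta)Y(v) = Y(\theta)v$, the first term becomes $X(\theta)Y(\theta)v$. Thus $\nabla^{\theta}_X\nabla^{\theta}_Y v = X(\theta)Y(\theta)v + \theta X(Y(v))$. Antisymmetrizing in $X,Y$, the terms $\theta X(Y(v))$ and $\theta Y(X(v))$ combine with $\nabla^{\theta}_{[X,Y]}v = \theta [X,Y](v) = \theta\bigl(X(Y(v)) - Y(X(v))\bigr)$ — here one uses that $[X,Y]$ acts entrywise on $v$ as $X\circ Y - Y\circ X$ — and cancel completely. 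What survives is $\mathcal{R}^{\theta}(X,Y)v = X(\theta)Y(\theta)v - Y(\theta)X(\theta)v = [X(\theta),Y(\theta)]v$.

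To match the stated formula I would then insert the factor $\theta$: since $v\in V$ we have $v = \theta v$, so $[X(\theta),Y(\theta)]v = [X(\theta),Y(\theta)]\theta v$, and it suffices to show $[X(\theta),Y(\theta)]\theta = \theta[X(\theta),Y(\theta)]\theta$, i.e. that $\theta$ acts as the identity on the image. Writing $X(\theta)Y(\theta)\theta - Y(\theta)X(\theta)\theta$ and repeatedly applying $\theta X(\theta)\theta = 0$ together with $\theta X(\theta) = X(\theta)(1-\theta)$ shows that $(1-\theta)[X(\theta),Y(\theta)]\theta = 0$, hence $[X(\theta),Y(\theta)]v = \theta[X(\theta),Y(\theta)]v = \theta[X(\theta),Y(\theta)]\theta v$, which is the claimed expression $\theta[X(\theta),Y(\theta)]v$; one also checks this lands in $V$, so indeed $\mathcal{R}^{\theta}(X,Y)\in\operatorname{End}_A(V)$. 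The only mildly delicate point — the "main obstacle," though it is really just bookkeeping — is keeping the entrywise action of derivations on matrices versus on vectors straight and applying the idempotent relations $\theta X(\theta)\theta = 0$ in the right places; the cancellation of the second-order terms against $\nabla^{\theta}_{[X,Y]}$ is automatic once the Leibniz rule \eqref{eq:matLeib} is invoked correctly, and $A$-bilinearity plus antisymmetry are guaranteed abstractly by the Proposition on the curvature endomorphism.
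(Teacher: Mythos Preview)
Your proof is correct and follows essentially the same route as the paper: a direct computation of $[\nabla^{\theta}_X,\nabla^{\theta}_Y]-\nabla^{\theta}_{[X,Y]}$ using the Leibniz rule \eqref{eq:matLeib} together with the idempotent identities obtained by differentiating $\theta^2=\theta$. The only cosmetic difference is that the paper carries the redundant factor $\theta v$ throughout (writing $\nabla^{\theta}_Y v=\theta Y(\theta v)$) so that the prefix $\theta$ appears automatically, whereas you first arrive at $[X(\theta),Y(\theta)]v$ and then insert $\theta$ via the identity $(1-\theta)[X(\theta),Y(\theta)]\theta=0$; both are equally valid.
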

\begin{proof}
By $\theta\mapsto X( \theta)$ we actually interprete $X\in \operatorname{Der}( A)$ as an element in $\operatorname{Der}\left(\operatorname{End}_A\left( A^{m}\right)\right)$. From $\theta^{2} =\theta$ we derive $X( \theta) -\theta X( \theta) =X( \theta) \theta$. The result follows from combining the terms in
\begin{align*}
\nabla _{X}^{\theta} \nabla _{Y}^{\theta} v&=\theta X( \theta Y( \theta v)) =\theta X( Y( \theta v)) +\theta X( \theta) Y( \theta v)\\
&=\theta X( Y( \theta v)) +\theta X( \theta) Y( \theta) +\theta X( \theta) \theta Y( v)\\
&=\theta X( Y( \theta v)) -\theta X( \theta ) Y( \theta ) v,\\
-\nabla _{Y}^{\theta } \nabla _{X}^{\theta } v&=-\theta Y( X( \theta v)) +\theta Y( \theta ) X( \theta ) v,\\
-\nabla _{[ X,Y]}^{\theta } v&=-\theta [ X,Y]( \theta v) =\theta [ X,Y]( v) -( \theta [ X,Y] \theta )( v).
\end{align*}    
\end{proof}

\section{Levi-Civita connections on the double cone}\label{sec:LC}

In this section we discuss Levi-Civita connections on the double cone (for semi-Riemannian differential geometry the reader may consult \cite{ONeill}). It turns out that with respect to the embedding metric the system of equations for the Christoffel symbols of the Levi-Civita connection can only be solved after localizing along the singularity. This may come as no  surprise to the specialist. On the other hand, when one is working with appropriately chosen metrics arising from symmetry reductions
then it is possible to construct non-flat examples whose Christoffel symbols are regular functions. The observations made here for the double cone can be worked out for semi-Riemannian orbifolds, viewed as algebraic varieties or as differential spaces. The constructions are expected to generalize to semi-Riemannian orbit spaces \cite{Michor}.

Before we enter the discussion let us fix some terminology.
Let $ A$ be a $ \boldsymbol{k}$-algebra. If $ L$ is an $ A$-module we say a symmetric $ A$-bilinear form $ G:L\times L\rightarrow A$ is \textit{non-degenerate} if the \emph{musical $ A$-module morphism}\footnote{In Ricci calculus a contravariant vector $X^{\mu }$ is turned into a covariant vector $X_{\mu }$ by \emph{lowering} the index using the metric tensor $g_{\mu\nu}$, i.e., by the map $\flat: \left( X^{\mu }\right)_{\mu =1,\dotsc ,n} \mapsto \left( X_{\mu } :=\sum _{\nu } g_{\mu \nu } X^{\nu }\right)_{\mu =1,\dotsc ,n}$.}
$$ \flat :L\rightarrow \operatorname{Hom}_{A}( L,A) ,\ X\mapsto X^\flat:=G( X,\ )$$
is injective. We emphasize that in this context non-degeneracy does not imply that  $\flat$ is onto. By a \textit{semi-Riemannian metric} for the Lie-Rinehart algebra $(L,A)$ we mean a non-degenerate symmetric $ \boldsymbol{k}$-bilinear form $ G:L \times L\rightarrow A$. 

Let $( L,A)$ be a Lie-Rinehart algebra and $G:V\times V\rightarrow A$ be a symmetric $A$-bilinear form. Then an $L$-connection $\nabla $ on $V$ is called \textit{metric }if for each $X\in L,\ u,v\in V$ we have
$X( G( u,v)) =G( \nabla _{X} u,v) +G( u,\nabla _{X} v)$.

\begin{theorem}

Let $(L,A)$ be a Lie-Rinehart algebra and $G:L \times L\rightarrow A$ be a symmetric $A$-bilinear form. Let $X_{1} ,\dotsc ,X_{l}$ be a system of generators for the $A$-module $L$.
Assume that the system of \emph{Koszul equations}
\begin{align}\label{eq:Koszul}
2G( \Xi ,X_{k}) &=X_{i}( G( X_{j} ,X_{k})) +X_{j}( G( X_{k} ,X_{i})) -X_{k}( G( X_{i} ,X_{j})) \ \\
\nonumber &\ \ \ \ \ \ \ \ \ \ \ \ \ \ \ \ \ \ \ \ \ \ \ \ \ \ \ \ \ \ \ \ \ \ \ +G([ X_{i} ,X_{j}] ,X_{k}) -G([ X_{j} ,X_{k}] ,X_{i}) +G([ X_{k} ,X_{i}] ,X_{j})
\end{align}
has a solution $\Xi=\sum_m\Gamma _{ij}^mX_m \in \operatorname{Der}( A)$ for each $i,j,k=1,\dotsc ,l$ with $\Gamma _{ij}^m$. Then $\nabla _{X_{i}} X_{j} :=\sum_m\Gamma _{ij}^mX_m$ defines a metric torsion-free connection $\nabla\in\operatorname{Conn}_L(L)$. If $G$ is non-degenerate the unique $\nabla$ is referred to as the \emph{Levi-Civita connection} of $G$. The coefficients $\Gamma _{ij}^m\in A$ are called the \emph{Christoffel symbols} of $\nabla$. 
\end{theorem}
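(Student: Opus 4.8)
The plan is to verify in turn that the prescription $\nabla_{X_i}X_j:=\sum_m\Gamma_{ij}^mX_m$ extends to a well-defined $L$-connection on $L$, and that this connection is torsion-free and metric; uniqueness in the non-degenerate case follows from the standard Koszul-formula argument. First I would address well-definedness, which is the crux of the matter. A choice of Christoffel symbols only specifies $\nabla$ on the generators; to get an $A$-linear assignment $X\mapsto\nabla_X$ on all of $L$ satisfying the Leibniz rule $\nabla_X(aY)=X(a)Y+a\nabla_XY$, one must check compatibility with the $A$-module relations among the $X_i$. Concretely, if $\sum_i a_iX_i=0$ is a syzygy then one needs $\sum_i a_i\nabla_{X_i}X_k=0$ (and, by $A$-linearity in the lower slot once combined with the Leibniz rule, also the analogous relations in the other argument). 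The natural way to see this: define a candidate covariant derivative by the formula forced by the metric and Leibniz rules, namely $G(\nabla_XY,X_k)$ should equal the right-hand side of the Koszul equation \eqref{eq:Koszul} with $X_i,X_j$ replaced by $X,Y$; but since $G$ need not be non-degenerate, one cannot simply invert $\flat$, and this is precisely why the hypothesis postulates the \emph{existence} of a solution $\Xi=\nabla_{X_i}X_j$ rather than deriving it.

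The honest approach is therefore the following. For $X=\sum_i a_iX_i$ and $Y=\sum_j b_jX_j$ in $L$, set tentatively
\begin{align*}
\nabla_XY:=\sum_{i,j}a_ib_j\,\nabla_{X_i}X_j+\sum_j X(b_j)X_j,
\end{align*}
and show this does not depend on the chosen expansions of $X$ and $Y$. Independence of the $b_j$'s is the Leibniz-rule bookkeeping and is routine; the genuine issue is independence of the $a_i$'s, i.e. that $\sum_i a_iX_i=0$ implies $\sum_{i,j}a_ib_j\nabla_{X_i}X_j=0$. Here I would pair against an arbitrary generator $X_k$: using the Koszul equation \eqref{eq:Koszul} to rewrite $2G(\nabla_{X_i}X_j,X_k)$, the right-hand side is manifestly built from $G(X_\bullet,X_\bullet)$, brackets, and anchor actions, all of which are $A$-linear or derivation-like in the first argument; summing against $a_i$ and invoking $\sum_i a_iX_i=0$ together with axiom (3) ($[aX,Y]=\dots$) and the derivation property makes the combination collapse. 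This shows $2G(\sum_{i,j}a_ib_j\nabla_{X_i}X_j,X_k)=0$ for all $k$; but without non-degeneracy that only gives vanishing after applying $\flat$. This is the main obstacle, and I expect the resolution to be that well-definedness must be read off directly from the \emph{form} of equation \eqref{eq:Koszul}: the very same manipulation shows that the right-hand side of \eqref{eq:Koszul}, viewed as a function of $(X_i,X_j,X_k)$, is $A$-trilinear up to the explicit anchor terms, so the $A$-linear combination $\sum_i a_i(\text{RHS})$ vanishes identically as an element of $A$ whenever $\sum_i a_iX_i=0$; hence any two solutions $\Xi$ of the Koszul system for expansions differing by a syzygy differ by something that is forced to be the stipulated $\nabla_{X_i}X_j$, and one simply \emph{declares} $\nabla_XY$ via the chosen solutions, checking consistency generator-by-generator. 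In other words, the theorem should be proved by taking the Koszul equations themselves as the defining relations and verifying they are self-consistent, rather than by solving them.

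Granting well-definedness, the remaining points are short. Torsion-freeness: compute $T_\nabla(X_i,X_j)=\nabla_{X_i}X_j-\nabla_{X_j}X_i-[X_i,X_j]$ and pair with $G(\,\cdot\,,X_k)$; by \eqref{eq:Koszul} the symmetric part of the Koszul expression in $(i,j)$ is exactly $G([X_i,X_j],X_k)$ (the $X_i,X_j$-symmetric derivative terms cancel in the difference and the bracket terms reorganize), so $G(T_\nabla(X_i,X_j),X_k)=0$ for all $k$; combined with the well-definedness mechanism this forces $T_\nabla=0$ on generators, hence everywhere since $T_\nabla\in\operatorname{Alt}_A^2(L,L)$. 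The metric property: $X_k(G(X_i,X_j))=G(\nabla_{X_k}X_i,X_j)+G(X_i,\nabla_{X_k}X_j)$ follows by adding the two instances of \eqref{eq:Koszul} obtained by cyclic relabelling, which is the classical identity; once it holds on generators, $A$-linearity and the Leibniz rule propagate it to all of $L$. Finally, uniqueness when $G$ is non-degenerate: if $\nabla,\nabla'$ are both metric and torsion-free, the standard Koszul computation yields $2G(\nabla_XY-\nabla'_XY,Z)=0$ for all $Z\in L$, hence $(\nabla_XY-\nabla'_XY)^\flat=0$, hence $\nabla_XY=\nabla'_XY$ by injectivity of $\flat$. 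The only subtlety worth flagging for the reader is that non-degeneracy is used \emph{only} for uniqueness; existence and the connection axioms rest entirely on the assumed solvability of \eqref{eq:Koszul}, and the $\Gamma_{ij}^m$ need not be uniquely determined by \eqref{eq:Koszul} in the degenerate case even though the resulting $\nabla$ may still be consistent.
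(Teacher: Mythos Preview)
Your approach is essentially the same as the paper's: introduce the Koszul trilinear expression $F(X,Y,Z)$ (the right-hand side of \eqref{eq:Koszul}), verify that it is $A$-linear in the third slot, $A$-linear in the first slot, and satisfies the Leibniz deviation in the second slot, and then read off the connection axioms, torsion-freeness from $F(X_i,X_j,X_k)-F(X_j,X_i,X_k)=2G([X_i,X_j],X_k)$, and the metric property from $F(X_i,X_j,X_k)+F(X_i,X_k,X_j)=2X_i(G(X_j,X_k))$. The paper carries out exactly these computations. You are more explicit than the paper about the well-definedness question (compatibility with syzygies among the $X_i$), which the paper passes over by writing ``so that $\nabla_{aX_i}X_j=a\nabla_{X_i}X_j$'' directly after establishing $F(aX_i,X_j,X_k)=aF(X_i,X_j,X_k)$.

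However, the step you flag is a genuine gap and your proposed resolution does not close it. From $F(aX_i,X_j,X_k)=aF(X_i,X_j,X_k)$ one only obtains $G(\nabla_{aX_i}X_j-a\nabla_{X_i}X_j,X_k)=0$ for all $k$, and likewise your torsion argument yields only $G(T_\nabla(X_i,X_j),X_k)=0$. Concluding that the element itself vanishes requires injectivity of $\flat$, i.e.\ non-degeneracy of $G$. Your suggested fix (``declare $\nabla_XY$ via the chosen solutions and check consistency generator-by-generator'') is circular: if $\ker\flat\neq 0$, different admissible choices of $\Xi_{ij}$ need not be compatible with the syzygies, and nothing in the hypotheses forces them to be. In short, both your argument and the paper's tacitly use non-degeneracy not only for uniqueness but already for well-definedness and torsion-freeness; the metric identity is the one item that goes through unconditionally. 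You should either assume non-degeneracy throughout, or strengthen the hypothesis to require that the chosen $\Gamma_{ij}^m$ are compatible with the syzygies of the generating system.
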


\begin{proof}
Let us write 
\begin{align*}
F( X,Y,Z) :=X( G( Y,Z)) +Y( G( Z,X)) -Z( G( X,Y)) +G([ X,Y] ,Z) -G([ Y,Z] ,X) +G([ Z,X] ,Y)
\end{align*}
for $X,Y,Z\operatorname{Der}(A)$.
We would like to verify that the map $Z\mapsto F( X_{i} ,X_{j} ,Z)$ is in $\operatorname{Hom}_{A}\left(\operatorname{Der}( A) ,A\right)$. But the $A$-linearity follows from
\begin{align*}
&X_{i}( G( X_{j} ,aZ)) +X_{j}( G( aZ,X_{i})) -G([ X_{j} ,aZ] ,X_{i}) +G([ aZ,X_{i}] ,X_{j})\\
&=aX_{i}( G( X_{j} ,Z)) +X_{i}( a) G( X_{j} ,Z) +aX_{j}( G( Z,X_{i})) +X_{j}( a) G( Z,X_{i})\\
&\ \ \ \ \ \ \ \ \ \ \ \ \ \ -aG([ X_{j} ,Z] ,X_{i}) -X_{j}( a) G( Z,X_{i}) +aG([ Z,X_{i}] ,X_{j}) -X_{i}( a) G( Z,X_{j})\\
&=aX_{i}( G( X_{j} ,Z)) +aX_{j}( G( Z,X_{i})) -aG([ X_{j} ,Z] ,X_{i}) +G([ Z,X_{i}] ,X_{j})
\end{align*}
for $a\in A$. So it is enough to assume the Koszul formula holds for generators. Similarly, we can show that $ $$F( aX_{i} ,X_{j} ,X_{k}) =aF( X_{i} ,X_{j} ,X_{k})$ so that $\nabla _{aX_{i}} X_{j} =a\nabla _{X_{i}} X_{j}$.
To show Leibniz rule $\nabla _{X_{i}}( aX_{j}) =a\nabla _{X_{i}} X_{j} +X_{i}( a) X_{j}$ we calculate
\begin{align*}
&F( X_{i} ,aX_{j} ,X_{k})=X_{i}( G( aX_{j} ,X_{k})) +aX_{j}( G( X_{k} ,X_{i})) -X_{k}( G( X_{i} ,aX_{j})) \\
&\ \ \ \ \ \ \ \ \ \ \ \ \ \ \ \ \ \ \ \ \ \ \ \ \ \ \ \ \ \ \ \ \ \ \ +G([ X_{i} ,aX_{j}] ,X_{k}) -G([ aX_{j} ,X_{k}] ,X_{i}) +G([ X_{k} ,X_{i}] ,aX_{j})\\
&=aF( X_{i} ,X_{j} ,X_{k}) +X_{i}( a) G( X_{j} ,X_{k}) -X_{k}( a) G( X_{i} ,X_{j}) +X_{i}( a) G( X_{j} ,X_{k}) +X_{k}( a) G( X_{j} ,X_{i}) ,
\end{align*}
so that $2G( \nabla _{X_{i}} aX_{j} ,X_{k}) -2aG( \nabla _{X_{i}} X_{j} ,X_{k}) =F( X_{i} ,aX_{j} ,X_{k}) -aF( X_{i} ,X_{j} ,X_{k}) =2X_{i}( a) G( X_{j} ,X_{k})$.

The torsion-freeness of $\nabla $ follows from $F( X_{i} ,X_{j} ,X_{k}) -F( X_{j} ,X_{i} ,X_{k}) =2G([ X_{i} ,X_{j}] ,X_{k})$.

To verify that the connection is metric we check
\begin{align*}
&2G( \nabla _{X_{i}} X_{j} ,X_{k}) +2G( X_{j} ,\nabla _{X_{i}} X_{k}) =F( X_{i} ,X_{j} ,X_{k}) +F( X_{i} ,X_{k} ,X_{j})\\
&=X_{i}( G( X_{j} ,X_{k})) +X_{j}( G( X_{k} ,X_{i})) -X_{k}( G( X_{i} ,X_{j})) \ \\
&\ \ \ \ \ \ \ \ \ \ \ \ \ \ \ \ \ \ \ \ \ \ \ \ \ \ \ \ \ \ \ \ \ \ \ \ \ \ \ +G([ X_{i} ,X_{j}] ,X_{k}) -G([ X_{j} ,X_{k}] ,X_{i}) +G([ X_{k} ,X_{i}] ,X_{j})\\
&\ \ \ \ \ +X_{i}( G( X_{j} ,X_{k})) +X_{k}( G( X_{j} ,X_{i})) -X_{j}( G( X_{i} ,X_{k})) \ \\
&\ \ \ \ \ \ \ \ \ \ \ \ \ \ \ \ \ \ \ \ \ \ \ \ \ \ \ \ \ \ \ \ \ \ \ \ \ \ \ +G([ X_{i} ,X_{k}] ,X_{j}) -G([ X_{k} ,X_{j}] ,X_{i}) +G([ X_{j} ,X_{i}] ,X_{k})=2X_{i}( G( X_{j} ,X_{k})) .
\end{align*}
\end{proof}

The Christoffel symbols are of course not unique since there can be syzygies among the generators $X_{1} ,\dotsc ,X_{l}$.  We observe that the torsion-freeness of $ \nabla $
$$ \nabla _{X_{i}} X_{j} -\nabla _{X_{j}} X_{i} -[ X_{i} ,X_{j}] =\sum _{k}\left( \Gamma _{ij}^{k} -\Gamma _{ji}^{k} -c_{ij}^{k}\right) X_{k}$$
means that $ \Gamma _{ij}^{k} -\Gamma _{ji}^{k} -c_{ij}^{k} =\sum _{m} \lambda _{ijm}^{k} s^{m}$ for some coefficients $ \lambda _{ijl}^{k} \in A$, where $ s^{1} ,\dotsc ,s^{l}$ are the syzygies for $ X_{1} ,\dotsc ,X_{l}$, i.e., $ \sum _{k} s^{k} X_{k} =0$.
Here we write $[X_i,X_j]=\sum_k c_{ij}^kX_k$ with structure functions $c_{ij}^k\in A$.

If $A=P/I$ is an affine algebra one is typically working with representatives $\tilde{\Gamma }_{ij}^{k} \in P$ of the Christoffel symbols $\Gamma _{ij}^{k} =\tilde{\Gamma }_{ij}^{k} +I\in A+P/I$. For $ \nabla $ to be well-defined it is necessary and sufficent that
for all $i,j,\mu,\nu$
\begin{align*}
\sum _{i} s_{\mu }^{i}\tilde{\Gamma }_{ij}^{k} &\in \sum _{\nu } \lambda _{j\mu }^{\nu } s_{\nu }^{k} +I,\\
\sum _{j} s_{\mu }^{j}\tilde{\Gamma }_{ij}^{k} +X_{i}\left( s_{\mu }^{k}\right)&\in \sum _{\nu } \upsilon _{i\mu }^{\nu } s_{\nu }^{k} +I
\end{align*}
for some $ \lambda _{j\mu }^{\nu } ,\ \upsilon _{i\mu }^{\nu } \in P$. Here $ s_{\mu } =\left[ s_{\mu }^{1} \ \dotsc \ s_{\mu }^{l}\right]^{\top }\in P$ denotes a representative of the $ \mu $th syzygy of $ X_{1} ,\dotsc ,X_{l}$. This type of relations appear to be relevant if one tries to lift $L$-connections to obtain homotopy connections on resolutions (see \cite{Vitagliano}).

It is often convenient to note that the Koszul equations \eqref{eq:Koszul} simplify whenever indices coincide.
\begin{align}\label{eq:Koszulsimple}
2G(\nabla_{X_j}X_j,X_i) &=2 X_{j}( G( X_{j} , X_{i})) - X_{i}( G( X_{j} , X_{j})) -2G([ X_{j} , X_{i}] , X_{j}) ,\\
\nonumber 2G(\nabla_{X_j}X_i,X_j) &= X_{i}( G( X_{j} , X_{j})) +2G([ X_{j} , X_{i}] , X_{j}) ,\\
\nonumber 2G(\nabla_{X_i}X_j,X_j) &= X_{i}( G( X_{j} , X_{j})) .
\end{align}

The algebra of regular functions on the double cone $A_{2}$ is the affine algebra $A=P/I$, where $P=\boldsymbol{k}\left[ u^{1} ,u^{2} ,u^{3}\right]$ and $I=\left( u^{1} u^{2} -u^{3} u^{3}\right)$. From Sections 4 and 5 of \cite{HOS} it follows that the four vector fields
\begin{align}\label{eq:derconegens}
X_{1} & :=2u^{1}\cfrac{\partial }{\partial u^{1}} +u^{3}\cfrac{\partial }{\partial u^{3}} ,\ X_{2} :=\ 2u^{3}\cfrac{\partial }{\partial u^{1}} +u^{2}\cfrac{\partial }{\partial u^{3}} ,\ \\
X_{3} & :=2u^{3}\cfrac{\partial }{\partial u^{2}} +u^{1}\cfrac{\partial }{\partial u^{3}} ,\ X_{4} :=2u^{2}\cfrac{\partial }{\partial u^{2}} +u^{3}\cfrac{\partial }{\partial u^{3}}
\end{align}
generate the $P$-module $\operatorname{Der}_{I}( P)=\{X\in\operatorname{Der}(P)\mid X(I)\subseteq I\}$ so that their classes form generators of $\operatorname{Der}( A) =\operatorname{Der}_{I}( P) \ /I\operatorname{Der}( P)$. The syzygies over $A$ among them are given by 
$$\begin{bmatrix}
X_{1} & X_{2} & X_{3} & X_{4}
\end{bmatrix}\begin{bmatrix}
u^{2} & -u^{3} & 0 & 0\\
-u^{3} & u^{1} & 0 & 0\\
0 & 0 & u^{2} & -u^{3}\\
0 & 0 & -u^{3} & u^{1}
\end{bmatrix} =0.$$
If we view $A_{2}$ as a subvariety of flat euclidean space $\operatorname{Spec}( P)$ supplied with the embedding metric then we can record the mutual euclidean scalar product among the $X_{i} ,\ i=1,2,3,4,$ into the table
$$\begin{array}{ c|c c c c }
G_{\operatorname{emb}} & X_{1} & X_{2} & X_{3} & X_{4}\\
\hline
X_{1} & 4u^{1} u^{1} +u^{3} u^{3} & 4u^{1} u^{3} +u^{2} u^{3} & u^{1} u^{3} & u^{3} u^{3}\\
X_{2} & 4u^{1} u^{3} +u^{2} u^{3} & 4u^{3} u^{3} +u^{2} u^{2} & u^{1} u^{2} & u^{2} u^{3}\\
X_{3} & u^{1} u^{3} & u^{1} u^{2} & 4u^{3} u^{3} +u^{1} u^{1} & 4u^{2} u^{3} +u^{1} u^{3}\\
X_{4} & u^{3} u^{3} & u^{2} u^{3} & 4u^{2} u^{3} +u^{1} u^{3} & 4u^{2} u^{2} +u^{3} u^{3}
\end{array}$$
whose entries are the coefficients of the embedding metric $G_{\operatorname{emb}} :\operatorname{Der}( A) \times \operatorname{Der}( A)\rightarrow A$. It turns out that Equation \eqref{eq:Koszul} is only solvable after localizing at the ideal $I$. That is, there is a Levi-Civita connection $\nabla^{\operatorname{emb}} \in \operatorname{Conn}_{\operatorname{Der}(A_{I})}\left(\operatorname{Der}( A_{I})\right)$ but none in  $ \operatorname{Conn}_{\operatorname{Der}(A)}\left(\operatorname{Der}( A)\right)$ . 


On the other hand, when $\boldsymbol{k}=\mathbb{C}$ or $\mathbb{R}$ the double cone can be also understood as a categorical quotient of a finite group. The idea we pursue here is to take an invariant semi-Riemannian metric before dividing out the group action. This in turn gives a semi-Riemannian metric on the categorical quotient and we will show that if the invariant metric is carefully chosen the Koszul equations \eqref{eq:Koszul} can be solved over the algebra of regular functions. The group in question is $\mathbb{Z}_2=\{1,-1\}$ acting on $\boldsymbol{k}^2=T^*\boldsymbol{k}=:W$ by $(q,p)\mapsto(-q,-p)$. The fundamental polynomial invariants of the $\mathbb{Z}_2$-action are given by $u^1=qq,\ u^2=pp$ and $u^3=qp$ satisfying the relation $u^1u^2-u^3u^3=0$. Accordingly, we have a surjective map of $\boldsymbol{k}$-algebras $\boldsymbol{u}^*:\boldsymbol{k}[x^1,x^2,x^3]\to \boldsymbol{k}[W]^{\mathbb{Z}_2},\ x^i\mapsto u^i$ for $i=1,2,3$, so that the invariant ring  $\boldsymbol{k}[W]^{\mathbb{Z}_2}$ is isomorphic to $P/I$, where $\boldsymbol{k}[x^1,x^2,x^3]$ and $I=(x^1x^2-x^3 x^3)$. 
Abusing notation we use the letter $u^i$ for the coordinates $x^i$ as well.
The general $ \mathbb{Z}_{2}$-invariant metric $ g$ on $ W$ is given by
$ \alpha ( q,p)(\mathrm{d} q)^{2} +\beta ( q,p)(\mathrm{d} p)^{2} +2\gamma ( q,p)\mathrm{d} q\mathrm{d} p,$
where $ \alpha ,\beta ,\gamma \in \boldsymbol{k}[ W]^{\mathbb{Z}_{2}} =:R$. This can be also written in matrix form as
$$ \begin{array}{ c|c c }
g & \frac{\partial }{\partial q} & \frac{\partial }{\partial p}\\
\hline
\frac{\partial }{\partial q} & \alpha  & \gamma \\
\frac{\partial }{\partial p} & \gamma  & \beta 
\end{array} .$$
The restriction of $ g$ to the generators of the module of $ \mathbb{Z}_{2}$-invariant vector fields $ q\frac{\partial }{\partial q} ,p\frac{\partial }{\partial q} ,q\frac{\partial }{\partial p} ,p\frac{\partial }{\partial p}$ is given by
$$ \begin{array}{ c|c c c c }
g_{|\operatorname{Der}( R)^{\mathbb{Z}_{2}} \times \operatorname{Der}( R)^{\mathbb{Z}_{2}}} & q\frac{\partial }{\partial q} & p\frac{\partial }{\partial q} & q\frac{\partial }{\partial p} & p\frac{\partial }{\partial p}\\
\hline
q\frac{\partial }{\partial q} & u^{1} \alpha  & u^{3} \alpha  & u^{1} \gamma  & u^{3} \gamma \\
p\frac{\partial }{\partial q} & u^{3} \alpha  & u^{2} \alpha  & u^{3} \gamma  & u^{2} \gamma \\
q\frac{\partial }{\partial p} & u^{1} \gamma  & u^{3} \gamma  & u^{1} \beta  & u^{3} \beta \\
p\frac{\partial }{\partial p} & u^{3} \gamma  & u^{2} \gamma  & u^{3} \beta  & u^{2} \beta 
\end{array} .$$
The matrix of $ g_{|\operatorname{Der}( R)^{\mathbb{Z}_{2}} \times \operatorname{Der}( R)^{\mathbb{Z}_{2}}}$ can be understood as a Kronecker product,
so that $ \det g_{|\operatorname{Der}( R)^{\mathbb{Z}_{2}} \times \operatorname{Der}( R)^{\mathbb{Z}_{2}}} =( u^{1} u^{2} -u^{3} u^{3})^2( \alpha \beta -\gamma \gamma )^2$.

It is easy to check (see \cite[Section 5]{HOS}) that the generators $q\frac{\partial }{\partial q} ,p\frac{\partial }{\partial q} ,q\frac{\partial }{\partial p} ,p\frac{\partial }{\partial p}$ of the $ \boldsymbol{k}[q,p]^{\mathbb{Z}_{2}}$-module $ \operatorname{Der}(\boldsymbol{k}[ p,q])^{\mathbb{Z}_{2}}$ are mapped by $\boldsymbol{u}^{*}$ to $X_{1} ,X_{2} ,X_{3} ,X_{4} \in \operatorname{Der}( A)$. The induced metric on $\operatorname{Der} (A)$ is
\begin{equation*}
\begin{array}{ c|c c c c }
G & X_{1} & X_{2} & X_{3} & X_{4}\\
\hline
X_{1} & u^{1} a & u^{3} a & u^{1} c & u^{3} c\\
X_{2} & u^{3} a & u^{2} a & u^{3} c & u^{2} c\\
X_{3} & u^{1} c & u^{3} c & u^{1} b & u^{3} b\\
X_{4} & u^{3} c & u^{2} c & u^{3} b & u^{2} b
\end{array} ,
\end{equation*}
where $\alpha (q,p)=a(u^{1} ,u^{2} ,u^{3} ),\ \beta (q,p)=b(u^{1} ,u^{2} ,u^{3} )$ and $\gamma (q,p)=c(u^{1} ,u^{2} ,u^{3} )$ with $a,b,c\in P$. Its inverse in the algebra of rational functions $\boldsymbol{k} (a,b,c,u^{1} ,u^{2} ,u^{3} )$ is
\begin{equation*}
G^{-1} =\frac{1}{( ab-cc)\left( u^{1} u^{2} -u^{3} u^{3}\right)}\begin{bmatrix}
u^{2} b & -u^{3} b & -u^{2} c & u^{3} c\\
-u^{3} b & u^{1} b & u^{3} c & -u^{1} c\\
-u^{2} c & u^{3} c & u^{2} a & -u^{3} a\\
u^{3} c & -u^{1} c & -u^{3} a & u^{1} a
\end{bmatrix} .
\end{equation*}
It turns out that if we assume the determinant of $g$ to be a unit, i.e., $ab-cc\in \boldsymbol{k}^{\times }$, then the Koszul equations \eqref{eq:Koszul} are solvable in $A$. We cannot solve, however, the system naively using the above formula for the inverse. This is because the syzygies introduce ambiguities that make linear algebra calculations not viable\footnote{One runs into inconsistencies like
$ X_{3} =\frac{\left( u^{1} u^{2} -u^{3} u^{3}\right) X_{3}}{\left( u^{1} u^{2} -u^{3} u^{3}\right)} =\frac{u^{1} u^{2} X_{3} +u^{3} u^{1} X_{4}}{\left( u^{1} u^{2} -u^{3} u^{3}\right)} =\frac{u^{1} u^{2} X_{3} -u^{1} u^{2} X_{3}}{\left( u^{1} u^{2} -u^{3} u^{3}\right)} =0.$}.
But we mention that in those inconsistent formal manipulations the factor $ u^{1} u^{2} -u^{3} u^{3}$ in the denominator cancels out when applying $ G^{-1}$ to the right hand side of Equation \eqref{eq:Koszul}. In the absence of syzygies this approach is actually mathematically sound (see Section \ref{sec:Dubrovin}). Assuming $ab-cc\in \boldsymbol{k}^{\times }$, it can be easily checked using elementary row operations that the kernel of $G$ coincides with the $A$-module of first syzygies. This means that $G$ is non-degenerate.

Fortunately, assuming $ \boldsymbol{k} =\mathbb{R}$ or $ \mathbb{C}$, we can deduce the Christoffel symbols in a clean and simple way using results from \cite{liftingHomo, HOS}. The approach works for all categorical quotients $ V/\!\!/\Gamma $ of a finite group $ \Gamma $ acting linearly on a vector space $ V$. In hindsight it can be checked that the formulas are correct, independently of the choice of the field $ \boldsymbol{k}$. The result \cite{HOS} we are using is that the canonical map
\begin{align} \label{eq:lambda}
 \lambda :\operatorname{Der}( R)^{\Gamma }\rightarrow \operatorname{Der}\left( R^{\Gamma }\right) ,\ (\lambda ( X) )(f):=X( f) ,
 \end{align}
where $ R=\boldsymbol{k}[ V]$ and $ f\in R^{\Gamma } ,\ X\in \operatorname{Der}( R)^{\Gamma }$, is an isomorphism of $ R^{\Gamma }$-modules. Via the Hilbert map $ \boldsymbol{u} =\left( u^{1} ,\dotsc ,u^{n}\right) :V\rightarrow \boldsymbol{k}^{n}$ we construct an isomorphism $ \boldsymbol{u}^{*} :R^{\Gamma }\rightarrow A:=P/I$ where $ P=\boldsymbol{k}\left[ u^{1} ,\dotsc ,u^{n}\right]$ and $ I$ is an ideal in $ P$ coming from the relations between fundamental polynomial invariants. Accordingly, using the composition of $ \boldsymbol{u}^{*}$ with $ \lambda $ we obtain an isomorphism $ \Phi $ of the Lie-Rinehart algebras $ \left(\operatorname{Der}( R)^{\Gamma } ,R^{\Gamma }\right)$ and $ \left(\operatorname{Der}( A) ,A\right)$. Assuming that $ g$ is a $ \Gamma $-invariant metric on $ \boldsymbol{k}[ V] \simeq \boldsymbol{k}\left[ x^{1} ,\dotsc ,x^{m}\right]$ with Levi-Civita connection $ \nabla ^{g}$ we know that $ \nabla _{\xi }^{g} \eta \in \operatorname{Der}( R)^{\Gamma }$ for all $ \xi ,\eta \in \operatorname{Der}( R)^{\Gamma }$. Consequently, we can for $ X,Y\in \operatorname{Der}( A)$ define 
$$ \nabla _{X} Y:=\Phi \left( \nabla _{\Phi ^{-1}( X)}^{g} \Phi ^{-1}( Y)\right)$$
and note that it satisfies the Koszul equation \eqref{eq:Koszul} for the metric $ G$ given by $ G( X,Y) =g\left( \Phi ^{-1}( X) ,\Phi ^{-1}( Y)\right)$. Obviously, the Riemannian curvature $ \mathcal{R}$ of $ G$ can be expressed
in terms of the Riemannian curvature $ \mathcal{R}^{g}$ of $ g$:
$$ \mathcal{R}( X,Y) Z=\Phi \left(\mathcal{R}^g\left( \Phi ^{-1}( X) ,\Phi ^{-1}( Y)\right) \Phi ^{-1}( Z)\right) ,$$
where $ X,Y,Z\in \operatorname{Der}( A)$. Let $ F_{\mu \nu }^{\lambda }$, $ \mu ,\nu ,\lambda \in \{1,\dotsc ,m\}$, be the Christoffel symbols of $ \nabla ^{g}$, i.e., 
$$ \nabla^g_{\frac{\partial }{\partial x^{\mu }}}\frac{\partial }{\partial x^{\nu }} =\sum _{\lambda } F_{\mu \nu }^{\lambda }\frac{\partial }{\partial x^{\lambda }}$$
and $ F_{\mu } =\left[ F_{\mu\nu }^{\lambda }\right]_{\nu ,\lambda \in \{1,\dotsc ,m\}}$ , $ \mu \in \{1,\dotsc ,m\}$, the corresponding matrices. In the case of the double cone we have $ m=2,$ $ x^{1} =q$ and $ x^{2} =p$. The Christoffel symbols can easily deduced from Equation \eqref{eq:Koszulsimple}. The matrices are
\begin{align*}
F_{1}&=\frac{1}{2(\alpha \beta -\gamma \gamma)}\begin{bmatrix}
\beta  & -\gamma \\
-\gamma  & \alpha 
\end{bmatrix}\begin{bmatrix}
\frac{\partial \alpha }{\partial q} & \frac{\partial \alpha }{\partial p}\\
2\frac{\partial \gamma }{\partial q} -\frac{\partial \alpha }{\partial p} & \frac{\partial \beta }{\partial q}
\end{bmatrix} =\frac{1}{2( \alpha \beta -\gamma \gamma )}\begin{bmatrix}
\beta \frac{\partial \alpha }{\partial q} -2\gamma \frac{\partial \gamma }{\partial q} +\gamma \frac{\partial \alpha }{\partial p} & \beta \frac{\partial \alpha }{\partial p} -\gamma \frac{\partial \beta }{\partial q}\\
-\gamma \frac{\partial \alpha }{\partial q} +2\alpha \frac{\partial \gamma }{\partial q} -\alpha \frac{\partial \alpha }{\partial p} & -\gamma \frac{\partial \alpha }{\partial p} +\alpha \frac{\partial \beta }{\partial q}
\end{bmatrix} ,\\
\\
F_{2} &=\frac{1}{2(\alpha \beta -\gamma \gamma)}\begin{bmatrix}
\beta  & -\gamma \\
-\gamma  & \alpha 
\end{bmatrix}\begin{bmatrix}
\frac{\partial \alpha }{\partial p} &2\frac{\partial \gamma }{\partial p} -\frac{\partial \beta }{\partial q}\\
\frac{\partial \beta }{\partial q} & \frac{\partial \beta }{\partial p}
\end{bmatrix} =\frac{1}{2( \alpha \beta -\gamma \gamma )}\begin{bmatrix}
\beta \frac{\partial \alpha }{\partial p} -\gamma \frac{\partial \beta }{\partial q} & 2\beta \frac{\partial \gamma }{\partial p} -\beta \frac{\partial \beta }{\partial q} -\gamma \frac{\partial \beta }{\partial p}\\
-\gamma \frac{\partial \alpha }{\partial p} +\alpha \frac{\partial \beta }{\partial q} & -2\gamma \frac{\partial \gamma }{\partial p} +\gamma \frac{\partial \beta }{\partial q} -\alpha \frac{\partial \beta }{\partial p}
\end{bmatrix} .
\end{align*}
In order to calculate $ \Gamma _{1} =\left[ \Gamma _{1j}^{k}\right]_{j.k\in \{1,2,3,4\}}$ we evaluate
\begin{align*}
\nabla _{q\frac{\partial }{\partial q}}\left( q\frac{\partial }{\partial q}\right) &=q\frac{\partial }{\partial q} +qq\nabla _{\frac{\partial }{\partial q}}\frac{\partial }{\partial q} \\
&=q\frac{\partial }{\partial q} +\frac{qq}{2( \alpha \beta -\gamma \gamma )}\left( \beta \frac{\partial \alpha }{\partial q} -2\gamma \frac{\partial \gamma }{\partial q} +\gamma \frac{\partial \alpha }{\partial p}\right)\frac{\partial }{\partial q} +\frac{qq}{2( \alpha \beta -\gamma \gamma )}\left( -\gamma \frac{\partial \alpha }{\partial q} +2\alpha \frac{\partial \gamma }{\partial q} -\alpha \frac{\partial \alpha }{\partial p}\right)\frac{\partial }{\partial p} ,\\
\nabla _{q\frac{\partial }{\partial q}}\left( p\frac{\partial }{\partial q}\right) &=pq\nabla _{\frac{\partial }{\partial q}}\frac{\partial }{\partial q} \\
&=\frac{pq}{2( \alpha \beta -\gamma \gamma )}\left( \beta \frac{\partial \alpha }{\partial q} -2\gamma \frac{\partial \gamma }{\partial q} +\gamma \frac{\partial \alpha }{\partial p}\right)\frac{\partial }{\partial q} +\frac{pq}{2( \alpha \beta -\gamma \gamma )}\left( -\gamma \frac{\partial \alpha }{\partial q} +2\alpha \frac{\partial \gamma }{\partial q} -\alpha \frac{\partial \alpha }{\partial p}\right)\frac{\partial }{\partial p} ,\\
\nabla _{q\frac{\partial }{\partial q}}\left( q\frac{\partial }{\partial p}\right) &=q\frac{\partial }{\partial p} +qq\nabla _{\frac{\partial }{\partial q}}\frac{\partial }{\partial p} =q\frac{\partial }{\partial p} +\frac{qq}{2( \alpha \beta -\gamma \gamma )}\left( \beta \frac{\partial \alpha }{\partial p} -\gamma \frac{\partial \beta }{\partial q}\right)\frac{\partial }{\partial q} +\frac{qq}{2( \alpha \beta -\gamma \gamma )}\left( -\gamma \frac{\partial \alpha }{\partial p} +\alpha \frac{\partial \beta }{\partial q}\right)\frac{\partial }{\partial p} ,\\
\nabla _{q\frac{\partial }{\partial q}}\left( p\frac{\partial }{\partial p}\right) &=pq\nabla _{\frac{\partial }{\partial q}}\frac{\partial }{\partial p} =\frac{pq}{2( \alpha \beta -\gamma \gamma )}\left( \beta \frac{\partial \alpha }{\partial p} -\gamma \frac{\partial \beta }{\partial q}\right)\frac{\partial }{\partial q} +\frac{pq}{2( \alpha \beta -\gamma \gamma )}\left( -\gamma \frac{\partial \alpha }{\partial p} +\alpha \frac{\partial \beta }{\partial q}\right)\frac{\partial }{\partial p} .
\end{align*}
Therefore a representative with entries in $ P$ of the matrix $ \Gamma _{1}$ of Christoffel symbols of $ \nabla $ is
$$ \frac{1}{2( ab-cc)}\left[\begin{smallmatrix}
2( ab-cc) +bX_{1}( a) -2cX_{1}( c) +cX_{3}( a) & bX_{2}( a) -2cX_{2}( c) +cX_{4}( a) & 2( ab-cc) +bX_{3}( a) -cX_{1}( b) & bX_{4}( a) -cX_{2}( a)\\
0 & 0 & 0 & 0\\
-cX_{1}( a) +2aX_{1}( c) -aX_{3}( a) & -cX_{2}( a) +2aX_{2}( c) -aX_{4}( a) & -cX_{3}( a) +aX_{1}( b) & -cX_{4}( a) +aX_{2}( b)\\
0 & 0 & 0 & 0
\end{smallmatrix}\right].$$
In order to determine $ \Gamma _{2} =\left[ \Gamma _{2j}^{k}\right]_{j.k\in \{1,2,3,4\}}$ we expand
\begin{align*}
\nabla _{p\frac{\partial }{\partial q} \ } q\frac{\partial }{\partial q} &=p\frac{\partial }{\partial p} +pq\nabla _{\frac{\partial }{\partial q}}\frac{\partial }{\partial q}\\
&=p\frac{\partial }{\partial q} +\frac{pq}{2( \alpha \beta -\gamma \gamma )}\left( \beta \frac{\partial \alpha }{\partial q} -2\gamma \frac{\partial \gamma }{\partial q} +\gamma \frac{\partial \alpha }{\partial p}\right)\frac{\partial }{\partial q} +\frac{pq}{2( \alpha \beta -\gamma \gamma )}\left( -\gamma \frac{\partial \alpha }{\partial q} +2\alpha \frac{\partial \gamma }{\partial q} -\alpha \frac{\partial \alpha }{\partial p}\right)\frac{\partial }{\partial p} ,\\
\nabla _{p\frac{\partial }{\partial q} \ } p\frac{\partial }{\partial q} &=pp\nabla _{\frac{\partial }{\partial q}}\frac{\partial }{\partial q} \\
&=\frac{pp}{2( \alpha \beta -\gamma \gamma )}\left( \beta \frac{\partial \alpha }{\partial q} -2\gamma \frac{\partial \gamma }{\partial q} +\gamma \frac{\partial \alpha }{\partial p}\right)\frac{\partial }{\partial q} +\frac{pp}{2( \alpha \beta -\gamma \gamma )}\left( -\gamma \frac{\partial \alpha }{\partial q} +2\alpha \frac{\partial \gamma }{\partial q} -\alpha \frac{\partial \alpha }{\partial p}\right)\frac{\partial }{\partial p} ,\\
\nabla _{p\frac{\partial }{\partial q} \ } q\frac{\partial }{\partial p} &=p\frac{\partial }{\partial p} +pq\nabla _{\frac{\partial }{\partial q}}\frac{\partial }{\partial p} =p\frac{\partial }{\partial p} +\frac{pq}{2( \alpha \beta -\gamma \gamma )}\left( \beta \frac{\partial \alpha }{\partial p} -\gamma \frac{\partial \beta }{\partial q}\right)\frac{\partial }{\partial q} +\frac{pq}{2( \alpha \beta -\gamma \gamma )}\left( -\gamma \frac{\partial \alpha }{\partial p} +\alpha \frac{\partial \beta }{\partial q}\right)\frac{\partial }{\partial p} ,\\
\nabla _{p\frac{\partial }{\partial q} \ } p\frac{\partial }{\partial p} &=pp\nabla _{\frac{\partial }{\partial q}}\frac{\partial }{\partial p} =\frac{pp}{2( \alpha \beta -\gamma \gamma )}\left( \beta \frac{\partial \alpha }{\partial p} -\gamma \frac{\partial \beta }{\partial q}\right)\frac{\partial }{\partial q} +\frac{pp}{2( \alpha \beta -\gamma \gamma )}\left( -\gamma \frac{\partial \alpha }{\partial p} +\alpha \frac{\partial \beta }{\partial q}\right)\frac{\partial }{\partial p} .
\end{align*}
so that a representative with entries in $ P$ of the matrix $ \Gamma _{2}$ is given by
$$ \frac{1}{2( ab-cc)}\left[\begin{smallmatrix}
0 & 0 & 0 & 0\\
2( ab-cc) +bX_{1}( a) -2cX_{1}( c) +cX_{3}( a) & bX_{2}( a) -2cX_{2}( c) +cX_{4}( a) & bX_{3}( a) -cX_{1}( b) & bX_{4}( a) -cX_{2}( a)\\
0 & 0 & 0 & 0\\
-cX_{1}( a) +2aX_{1}( c) -aX_{3}( a) & -cX_{3}( a) +2aX_{3}( c) -aX_{4}( a) & 2( ab-cc) -cX_{3}( a) +aX_{1}( b) & -cX_{4}( a) +aX_{2}( b)
\end{smallmatrix} \right].$$
To determine $ \Gamma _{3} =\left[ \Gamma _{3j}^{k}\right]_{j.k\in \{1,2,3,4\}}$ we investigate
\begin{align*}
\nabla _{q\frac{\partial }{\partial p} \ } q\frac{\partial }{\partial q}&=\frac{qq}{2( \alpha \beta -\gamma \gamma )}\left( \beta \frac{\partial \alpha }{\partial p} -\gamma \frac{\partial \beta }{\partial q}\right)\frac{\partial }{\partial q} +\frac{qq}{2( \alpha \beta -\gamma \gamma )}\left( -\gamma \frac{\partial \alpha }{\partial p} +\alpha \frac{\partial \beta }{\partial q}\right)\frac{\partial }{\partial p} ,\\
\nabla _{q\frac{\partial }{\partial p} \ } p\frac{\partial }{\partial q} &=q\frac{\partial }{\partial q} +\frac{pq}{2( \alpha \beta -\gamma \gamma )}\left( \beta \frac{\partial \alpha }{\partial p} -\gamma \frac{\partial \beta }{\partial q}\right)\frac{\partial }{\partial q} +\frac{pq}{2( \alpha \beta -\gamma \gamma )}\left( -\gamma \frac{\partial \alpha }{\partial p} +\alpha \frac{\partial \beta }{\partial q}\right)\frac{\partial }{\partial p} ,\\
\nabla _{q\frac{\partial }{\partial p} \ } q\frac{\partial }{\partial p} &=\frac{qq}{2( \alpha \beta -\gamma \gamma )}\left( 2\beta \frac{\partial \gamma }{\partial p} -\beta \frac{\partial \beta }{\partial q} -\gamma \frac{\partial \beta }{\partial p}\right)\frac{\partial }{\partial q} +\frac{qq}{2( \alpha \beta -\gamma \gamma )}\left( -2\gamma \frac{\partial \gamma }{\partial p} +\gamma \frac{\partial \beta }{\partial q} -\alpha \frac{\partial \beta }{\partial p}\right)\frac{\partial }{\partial p} ,\\
\nabla _{q\frac{\partial }{\partial p} \ } p\frac{\partial }{\partial p} &=q\frac{\partial }{\partial p} +\frac{pq}{2( \alpha \beta -\gamma \gamma )}\left( 2\beta \frac{\partial \gamma }{\partial p} -\beta \frac{\partial \beta }{\partial q} -\gamma \frac{\partial \beta }{\partial p}\right)\frac{\partial }{\partial q} +\frac{pq}{2( \alpha \beta -\gamma \gamma )}\left( -2\gamma \frac{\partial \gamma }{\partial p} +\gamma \frac{\partial \beta }{\partial q} -\alpha \frac{\partial \beta }{\partial p}\right)\frac{\partial }{\partial p} ,
\end{align*}
so that a representative with entries in $ P$ of the matrix $ \Gamma _{3} =\left[ \Gamma _{3j}^{k}\right]_{j.k\in \{1,2,3,4\}}$ is given by
$$ \frac{1}{2( ab-cc)}\left[\begin{smallmatrix}
bX_{3}( a) -cX_{1}( b) & 2( ab-cc) +bX_{4}( a) -cX_{2}( b) & 2bX_{3}( c) -bX_{1}( b) -cX_{3}( b) & 2bX_{4}( c) -bX_{2}( b) -cX_{4}( b)\\
0 & 0 & 0 & 0\\
-cX_{3}( a) +aX_{1}( b) & -cX_{4}( a) +aX_{2}( b) & -2cX_{3}( c) +cX_{1}( b) -aX_{3}( b) & 2( ab-cc) -2cX_{4}( c) +cX_{2}( b) -aX_{4}( b)\\
0 & 0 & 0 & 0
\end{smallmatrix}\right] .$$
Finally, in order to figure out $ \Gamma _{4} =\left[ \Gamma _{4j}^{k}\right]_{j.k\in \{1,2,3,4\}}$ we evaluate
\begin{align*}
\nabla _{p\frac{\partial }{\partial p} \ } q\frac{\partial }{\partial q}&=\frac{pq}{2( \alpha \beta -\gamma \gamma )}\left( \beta \frac{\partial \alpha }{\partial p} -\gamma \frac{\partial \beta }{\partial q}\right)\frac{\partial }{\partial q} +\frac{pq}{2( \alpha \beta -\gamma \gamma )}\left( -\gamma \frac{\partial \alpha }{\partial p} +\alpha \frac{\partial \beta }{\partial q}\right)\frac{\partial }{\partial p} ,\\
\nabla _{p\frac{\partial }{\partial p} \ } p\frac{\partial }{\partial q} &=p\frac{\partial }{\partial q} +\frac{pp}{2( \alpha \beta -\gamma \gamma )}\left( \beta \frac{\partial \alpha }{\partial p} -\gamma \frac{\partial \beta }{\partial q}\right)\frac{\partial }{\partial q} +\frac{pp}{2( \alpha \beta -\gamma \gamma )}\left( -\gamma \frac{\partial \alpha }{\partial p} +\alpha \frac{\partial \beta }{\partial q}\right)\frac{\partial }{\partial p} ,\\
\nabla _{p\frac{\partial }{\partial p} \ } q\frac{\partial }{\partial p} &=\frac{pq}{2( \alpha \beta -\gamma \gamma )}\left( 2\beta \frac{\partial \gamma }{\partial p} -\beta \frac{\partial \beta }{\partial q} -\gamma \frac{\partial \beta }{\partial p}\right)\frac{\partial }{\partial q} +\frac{pq}{2( \alpha \beta -\gamma \gamma )}\left( -2\gamma \frac{\partial \gamma }{\partial p} +\gamma \frac{\partial \beta }{\partial q} -\alpha \frac{\partial \beta }{\partial p}\right)\frac{\partial }{\partial p} ,\\
\nabla _{p\frac{\partial }{\partial p} \ } p\frac{\partial }{\partial p} &=p\frac{\partial }{\partial p} +\frac{pp}{2( \alpha \beta -\gamma \gamma )}\left( 2\beta \frac{\partial \gamma }{\partial p} -\beta \frac{\partial \beta }{\partial q} -\gamma \frac{\partial \beta }{\partial p}\right)\frac{\partial }{\partial q} +\frac{pp}{2( \alpha \beta -\gamma \gamma )}\left( -2\gamma \frac{\partial \gamma }{\partial p} +\gamma \frac{\partial \beta }{\partial q} -\alpha \frac{\partial \beta }{\partial p}\right)\frac{\partial }{\partial p} ,
\end{align*}
so that a representative with entries in $ P$ of the matrix $ \Gamma _{4}$ is given by
$$ \frac{1}{2( ab-cc)}\left[\begin{smallmatrix}
0 & 0 & 0 & 0\\
bX_{3}( a) -cX_{3}( b) & 2( ab-cc) +bX_{4}( a) -cX_{2}( b) & 2bX_{3}( c) -bX_{1}( b) -cX_{3}( b) & 2bX_{4}( c) -bX_{2}( b) -cX_{4}( b)\\
0 & 0 & 0 & 0\\
-cX_{3}( a) +aX_{3}( b) & -cX_{4}( a) +aX_{2}( b) & -2cX_{3}( b) +cX_{1}( b) -aX_{3}( b) & 2( ab-cc) -2cX_{4}( c) +cX_{2}( b) -aX_{4}( b)
\end{smallmatrix}\right].$$

In the case of the flat invariant metric $g=(\mathrm{d}q)^2+(\mathrm{d}p)^2$ we have that the Levi-Civita connection $\nabla$ coincides with the connection of the adjoint representation (see Subsection \ref{subsec:adjoint}). 

In order to write down the curvature endomorphism we introduce some notation. Let $ \vec{v}$ be the column vector $ \begin{bmatrix}
\frac{\partial \alpha }{\partial q} & \frac{\partial \alpha }{\partial p} & \frac{\partial \beta }{\partial q} & \frac{\partial \beta }{\partial p} & \frac{\partial \gamma }{\partial q} & \frac{\partial \gamma }{\partial p}
\end{bmatrix}^{\top } \in P^{6}$ and evaluate for a matrix $ Q\in P^{6\times 6}$ the corresponding quadratic form $ \vec{v}^{\top } Q\vec{v}$. Let us now put
$$ Q':=\frac{1}{4(\alpha \beta -\gamma \gamma )^{2}}\begin{bmatrix}
0 & 0 & 0 & 0 & 0 & 0\\
0 & \beta  & 0 & 0 & 0 & 0\\
\beta  & -\gamma  & \alpha  & 0 & 0 & 0\\
\gamma  & \alpha  & 0 & 0 & 0 & 0\\
0 & 0 & -2\gamma  & -2\alpha  & 0 & 0 \\
-2\beta  & -2\gamma  & 0 & 0 & 4\gamma  & 0
\end{bmatrix}.$$ 
We used mathematica \cite{Mathematica} to determine
the curvature tensor
$$ \mathcal{R}^{g}\left(\frac{\partial }{\partial q} ,\frac{\partial }{\partial p}\right) =\left(\vec{v}^{\top } Q'\vec{v} -\frac{\left(\frac{\partial ^{2} \alpha }{\partial p\partial p} +\frac{\partial ^{2} \beta }{\partial q\partial q} -2\frac{\partial ^{2} \gamma }{\partial q\partial p}\right)}{2( \alpha \beta -\gamma \gamma )} \right)\begin{bmatrix}
\ \gamma  & \ -\alpha \\
\ \beta  & \ -\gamma 
\end{bmatrix} .$$
From this one can easily deduce expressions for the curvature endomorphism $\mathcal{R}$. For lack of space we only give an example. Namely we have
$$ \mathcal{R}^{g}\left( q\frac{\partial }{\partial q} ,q\frac{\partial }{\partial p}\right) q\frac{\partial }{\partial q} =qqq\mathcal{R}^{g}\left(\frac{\partial }{\partial q} ,\frac{\partial }{\partial p}\right)_{1}^{1}\frac{\partial }{\partial q} \ +qqq\mathcal{R}^{g}\left(\frac{\partial }{\partial q} ,\frac{\partial }{\partial p}\right)_{1}^{2}\frac{\partial }{\partial p} .$$
We deduce
\begin{align*}
\mathcal{R}( X_{1} ,X_{3}) X_{1} &=c \left( \vec{v}_{q}^{\top } Q\vec{v}_{q} -\frac{X_{3}( X_{3}( a)) +X_{1}( X_{1}( b)) -2X_{3}(X_{1}( c))}{2( ab-cc)}\right) X_{1}\\
&\ \ \ \ \ \ \ \ \ \ \ \ \ \ \ \ \ \ \ \ \ \ \ \ \ \ \ \ \ \ \ \ \ \ \ \ \ \ \ \ \ +b \left(\vec{v}_{q}^{\top } Q\vec{v}_{q} -\frac{X_{3}( X_{3}( a)) +X_{1}( X_{1}( b)) -2X_{3}(X_{1}( c))}{2( ab-cc)}\right) X_{3} ,
\end{align*}
where $ \vec{v}_{q} :=\begin{bmatrix}
X_{1}( a) & X_{3}( a) & X_{1}( b) & X_{3}( b) & X_{1}( c) & X_{3}( c)
\end{bmatrix}^{\top }$ and $ Q$ is the matrix $ Q'$ after the substitution $ \alpha \mapsto a,\ \beta \mapsto b,\ \gamma \mapsto c$. The expressions we deduce this way should be understood modulo $ I$ and are unique up to syzygies.

By construction, the curvature endomorphism $\mathcal R$ of $\nabla$ satisfies the \emph{first Bianchi identity}
$$\mathcal{R}(X,Y)Z+\mathcal{R}(Y,Z)X+\mathcal{R}(Z,X)Y=0$$
for all $X,Y,Z\in \operatorname{Der}(A)$ since the curvature tensor $\mathcal{R}^g$ of $\nabla^g$ fulfills it. We mention that the usual proof of the first Bianchi identity for $\nabla$ cannot be used since normal coordinates do not appear to be available. 

Finally, we mention that the formulas above can be used as well to deduce Levi-Civita connection and curvature endomorphism for the Lie-Rinehart algebra $(\operatorname{Der}(\mathcal{C}^\infty(\mathcal{X})),\mathcal{C}^\infty(\mathcal{X}))$ for the simple cone $\mathcal{X}$ seen as a differential space $(\mathcal{X},\mathcal{C}^\infty(\mathcal{X}))$ in the sense of Sikorski. This is because 
the vector fields \eqref{eq:derconegens} can serve as representatives of generators of the $\mathcal{C}^\infty(\mathcal{X})$-module $\operatorname{Der}(\mathcal{C}^\infty(\mathcal{X}))$ as well while the syzygies do not change when replacing real polynomials by smooth function (see \cite[Section 6]{HOS}).


\section{Levi-Civita connections on the orbit space of the Coxeter group $A_{2}$}\label{sec:Dubrovin}

We discuss the orbit space of the Coxeter group $A_{n-1}$, $n\geq 3$, in the real case with emphasis on the simplest special case $A_{2}$. The aim is to elaborate Levi-Civita connections on the orbit space seen as a differential space in the sense of Sikorski (see, e.g. \cite{Navarro, Sniatycki, HOS}). Boris Dubrovin \cite{Dubrovin} has constructed Frobenius manifold structures on the principal stratum of orbit spaces by Coxeter groups. The question if such a Frobenius structure extends to the whole orbit space is open.

The group in the case $A_{n-1}$ is the symmetric group $\Sigma_n=\operatorname{Aut}(\{1,\dots,n\})$.
We let $\sigma \in \Sigma _{n}$ act on $\boldsymbol{x} =\left[ x^{1} \ \dotsc \ x^{n}\right]^{\top } \in \mathbb{R}^{n}$ by 
$$\sigma \left[ x^{1} \ \dotsc \ x^{n}\right]^{\top } =\left[ x^{\sigma ^{-1}( 1)} \ \dotsc \ x^{\sigma ^{-1}( n)}\right]^\top.$$ 
The action preserves the subspace $V:=\ker\left[
1 \  1 \  \cdots  \  1
\right] =\left\{\boldsymbol{x} \in \mathbb{R}^{n} \mid \sum^n_{j=1} x^{j} =0\right\}$.
The $ \Sigma _{n}$-action on $ V$ is coregular, i.e., the categorical quotient $ V/\!\!/\Sigma _{n} =\operatorname{Spec}\left(\mathbb{R[} V]^{\Sigma _{n}}\right)$ is smooth (it is actually $ \simeq \mathbb{R}^{n-1}$). A fundamental system of $ \Sigma _{n}$-invariants for the $ \Sigma _{n}$-action on $ \mathbb{R}^{n}$ is provided by the elementary symmetric functions\footnote{We apologize to the reader that the use of upper indicies is a nuisance here. To avoid misunderstandings we point out that there are no scalars $u$ or $x$ used in this section.}
$$ u^{j}(\boldsymbol{x}) =\sum _{1\leq i_{1} < i_{2} < \dotsc < i_{j} \leq n} x^{i_{1}} x^{i_{2}} \cdots x^{i_{j}} ,$$
which are algebraically independent for $ j=1,\dotsc ,n$. The orbit space $ \mathbb{R}^{n} /\Sigma _{n}$ is mapped to the semialgebraic set $ \boldsymbol{u}\left(\mathbb{R}^{n}\right)$ by the Hilbert map $ \boldsymbol{u} =\left( u^{1} ,\dotsc ,u^{n}\right) :\mathbb{R}^{n}\rightarrow \mathbb{R}^{n}$. Note that $ \boldsymbol{x} \in V$ if and only $ u^{1}(\boldsymbol{x}) =0$ so that
$ u^{2} ,u^{3} ,\dotsc ,u{^{n}}$ is a fundamental system of polynomial $\Sigma_n$-invariants on $ V$. The orbit space $ \mathbb{R}^{n} /\Sigma _{n}$ is cut out from $ \mathbb{R}^{n} /\!\!/\Sigma _{n} \simeq \mathbb{R}^{n}$ by the requirement that the \emph{Bezoutiant}, i.e., the Hankel matrix $ [ p_{i+j}]_{i,j=0,\dotsc ,n-1}$ of the sequence $ ( p_{i})_{i\geq 0}$ in $ \mathbb{R}\left[ x^{1} ,\dotsc ,x^{n}\right]^{\Sigma _{n}}$ of power sums $ p_{j}(\boldsymbol{x}) =\sum _{m=1}^{n} x{^{m}}^{j}$, $j\geq 0$: 
$$ \operatorname{Bez} =\begin{bmatrix}
p_{0} & p_{1} & p_{2} & \cdots  & p_{n-1}\\
p_{1} & p_{2} & p_{3} &  & p_{n}\\
p_{2} & p_{3} & p_{4} &  & p_{n+1}\\
\cdots  &  &  &  & \cdots \\
p_{n-1} & p_{n} & p_{n+1} & \cdots  & p_{2n-2}
\end{bmatrix}$$
is positively semidefinite (see \cite{SchwarzProcesi}). Note that $ p_{1}{}_{|V} =u^{1}{}_{|V} =0$. By \cite[Proposition 2.1]{HHSminimal} this is equivalent to the requirement that for each $ j=2,\dotsc ,n$
$$ \sum _{\boldsymbol{\boldsymbol{\mu} } \in \binom{[ n]}{j}} v_{\boldsymbol{\mu} }(\boldsymbol{u}) \geq 0,$$
where we sum over the invariants $ v_{\boldsymbol{\mu }}(\boldsymbol{u}) :=\mathcal{V}(\boldsymbol{x}_{\boldsymbol{\mu }})^{2}$. Here $ \mathcal{V}(\boldsymbol{x}_{\boldsymbol{\mu }})$ is the Vandermonde determinant of the variables $ x^{i_{1}} ,x^{i_{2}} ,\dotsc ,x^{i_{j}}$ when $ \boldsymbol{\mu} =\{i_{1} < i_{2} < \dotsc < i_{j}\}$, which we interprete as an element in the set  $\binom{[n]}{j}$ of subsets of $[n]=\{1,\dots,n\}$ of cardinality $j$. We write
 $$ \mathcal{X} =\left\{\boldsymbol{u}' =\left( u^{2} ,\dotsc ,u^{n}\right) \in \mathbb{R}^{n-1} \mid \sum _{\boldsymbol{\boldsymbol{\mu }} \in \binom{[ n]}{j}} v_{\boldsymbol{\mu} }(0,\boldsymbol{u}') \geq 0\right\}$$
and note that $ \{0\} \times \mathcal{X}$ is nothing but the image $ \boldsymbol{u}( V)$ of $V$ under the Hilbert embedding $ \boldsymbol{u} =\left( u^{1} ,\dotsc ,u^{n}\right) :\mathbb{R}^{n}\rightarrow \mathbb{R}^{n}$. We attach to the variable $ u^{j}$ the internal degree $ j=\operatorname{deg}\left( u^{j}\right)$ of the corresponding elementary symmetric polynomial $ u^{j}$.

The closures of nontrivial isotropy type strata of the $ \Sigma _{n}$-action on $ \mathbb{R}^{n}$ are parametrized by partitions as follows. The closure of the isotropy stratum (in the classical topology) associated to the partition $ \boldsymbol{\lambda } =( \lambda _{1} ,\lambda _{2} ,\dotsc )$ \ of $ n$ is given by the equations $ ( E_{\boldsymbol{\lambda }})$
\begin{align*}
( E_{1}) \ \ \ \ \ \ \ \ \ \ \ \ \ \ \ \ &x^{1} =\dotsc =x^{\lambda _{1}} ,\\
( E_{2}) \ \ \ \ \ \ \ \ \ \ \ \ \ \ \ \ &x^{\lambda _{1} +1} =\dotsc =x^{\lambda _{1} +\lambda _{2}} ,\\
&\dotsc \\
( E_{\operatorname{length}(\boldsymbol{\lambda }) -1}) \ \ \ &x^{\lambda _{1} +\dotsc +\lambda _{n-2} +1} =\dotsc =x^{\lambda _{1} +\dotsc +\lambda _{n-1}} ,\\
( E_{\operatorname{length}(\boldsymbol{\lambda })}) \ \ \ \ \ \ &x^{\lambda _{1} +\dotsc +\lambda _{n-1} +1} =\dotsc =x^{n} .
\end{align*}
By \cite[Theorem 2.5]{Bierstone} the closures of the isotropy type strata are in one-to-one correspondence with the closures of the minimal strata in $ \mathcal{X}$. The corresponding isotropy groups are conjugate to 
$ \Sigma _{\lambda _{1}} \times \Sigma _{\lambda _{2}} \times \cdots \times \Sigma _{\lambda _{n}}$. 

In principle, the equations for the closures of the minimal strata can be determined by elimination. We have that $ ( E_{\boldsymbol{\lambda }})$ implies $ ( E_{\boldsymbol{\lambda } '})$ 
if and only if $ \boldsymbol{\lambda } '$ is a \textit{refinement} of $ \boldsymbol{\lambda }$ and that the equation $ E_{( 1,1,\dotsc ,1)}$ is empty. The restriction $ ( E_{\boldsymbol{\lambda }})_{|V}$ of $ ( E_{\boldsymbol{\lambda }})$ to $ V$ can be understood as omitting $ ( E_{( n)})$ since these equation vanishes on $ V$. The equation $ ( E_{( 2,1,\dotsc ,1)})_{|V}$ is given by the vanishing of the \textit{discriminant} of the polynomial 
$$ f( t) =t^{n+1} +\sum_{j=1}^{n-1}( -1)^{n-j} \ u^{n+1-j} \ t^{j} =\prod _{l=1}^{n+1}\left( t-x^{l}\right) .$$
The polynomial $ f( t)$ is the also known as the \textit{miniversal unfolding of} $ t^{n+1}$ (cf. \cite{Dubrovin}, \cite[Section 1]{Bierstone}). 

Before we look into an example let us recall that in order to express the power sums in terms of $ \boldsymbol{u}$ we may expand
$$ \log\left(\sum _{j\geq 0} u^{j} s^{j}\right) =\sum _{m\geq 1}\frac{( -1)^{m+1}}{m} p_{m} s^{m} \in \mathbb{R}\left[ x^{1} ,\dotsc ,x^{n}\right]^{\Sigma _{n}} \llbracket s\rrbracket .$$
We deduce
\begin{align}\label{eq:Newton}
p_{1} &=u^{1} ,\\
\nonumber p_{2} &=-2u^{2} +u{^{1}}^{2} ,\\
 \nonumber p_{3} &=3u^{3} -3u^{1} u^{2} +u{^{1}}^{3} ,\\
\nonumber p_{4} &=-4u^{4} +2u{^{2}}^{2} +4u^{1} u^{3} -4u{^{1}}^{2} u^{2} +u{^{1}}^{4} ,\ \text{etc.} ,
\end{align}
which, after restricting to $ V$, simplifies to
$ p_{2} =-2u^{2} ,\ p_{3} =3u^{3} ,\ p_{4} =-4u^{4} +2u{^{2}}^{2}$, etc. 

We now specialize to the case $ n=3$. The reader is invited to elaborate more complicated cases.
The partition $ ( 1,1,1)$ corresponds to the closure of the principal stratum, which has trivial isotropy group. The partition $ ( 2,1)$, on the other hand, corresponds to the closure of the stratum with isotropy group $ \Sigma _{2}$, which is given by the vanishing discriminant $ \Delta :=4u{^{2}}^{3}+27u{^{3}}^{2}$
$$( E_{2}) \ \ \ \ \ \ \ \ \ \ \Delta =0.$$
The equation is homogeneous of internal degree $ 6$. We collect the data of the stratification in Table \ref{tab:Sigma3}.
\begin{table}[]
    \centering
  $$  \begin{array}{ c||c|c|c }
\text{isotropy group} & \{e\} & \Sigma _{2}\times\{e\} & \Sigma _{3}\\
\hline
\text{partition} & ( 1,1,1) & ( 2,1) & \\
\hline
\text{equation of stratum closure} & \text{empty} & \Delta =4u{^{2}}^{3} +27u{^{3}}^{2}=0 & u^{1} =u^{2} =u^{3} =0
\end{array}$$
    \caption{Strata of $V/\Sigma_3$.}
    \label{tab:Sigma3}
\end{table}

The requirement of the positive semidefiniteness of the restriction $ \operatorname{Bez}_{|V}$ of the Bezoutiant $ \operatorname{Bez}$ to $ V$ is expressed by
\begin{align*}
\begin{vmatrix}
p_{0} & 0\\
0 & p_{2}
\end{vmatrix} &=\begin{vmatrix}
3 & 0\\
0 & p_{2}
\end{vmatrix} =3p_{2} =-6u^{2} \geq 0,\\
\left| \operatorname{Bez}_{|V}\right| &=\begin{vmatrix}
p_{0} & 0 & p_{2}\\
0 & p_{2} & p_{3}\\
p_{2} & p_{3} & p_{4}
\end{vmatrix} =\begin{vmatrix}
3 & 0 & p_{2}\\
0 & p_{2} & p_{3}\\
p_{2} & p_{3} & p_{4}
\end{vmatrix}
=3p_{2} p_{4} -p_{2}^{3} -3p_{3}^{2} =-6u^{2}\left( -4u^{4} +2u{^{2}}^{2}\right) -\left( -2u^{2}\right)^{3} -27u{^{3}}^{2}\\
&=-4u{^{2}}^{3} -27u{^{3}}^{2} \geq 0,
\end{align*}
where we keep in mind that $ u^{4} =0$ for $ n=3$. Since $\Delta\leq 0$ implies $u^2\leq 0$ we have that 
$$ \mathcal{X} =\left\{\left( u^{2} ,u^{3}\right) \in \mathbb{R}^{2} \mid \Delta =4u{^{2}}^{3} +27u{^{3}}^{2}\leq 0\right\},$$
which is depicted in Figure \ref{fig:discr}.
\begin{figure}
    \centering
    \includegraphics[width=0.3\linewidth]{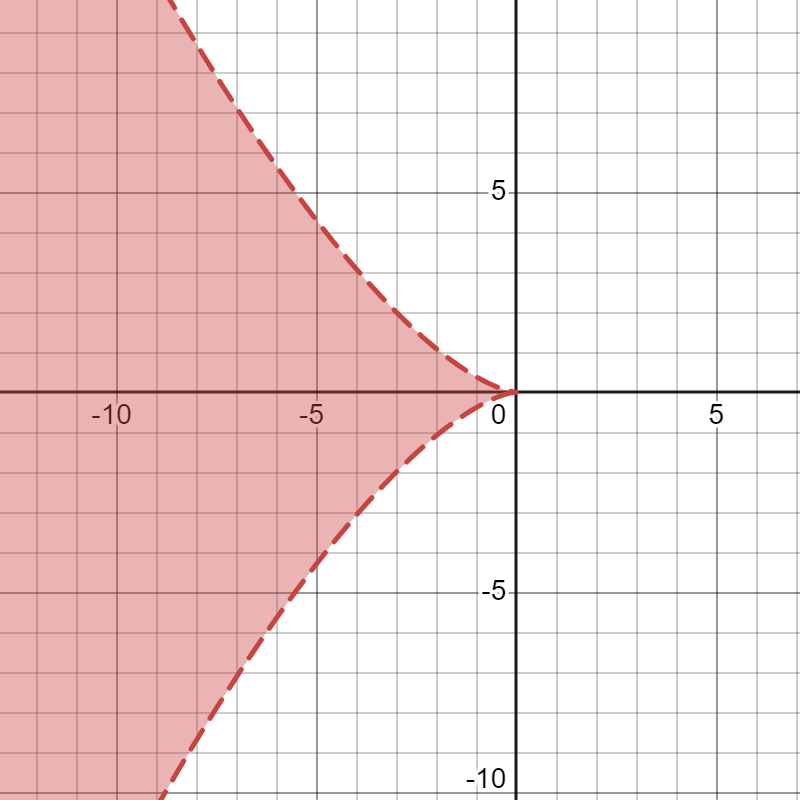}
    \caption{Hilbert embedding $\mathcal{X}$ of $V/\Sigma_3$.}
    \label{fig:discr}
\end{figure}

To determine generators of the $ \mathbb{R}\left[ x^{1} ,x^{2} ,x^{3}\right]^{\Sigma _{3}}$-module $ \operatorname{Der}\left(\mathbb{R}\left[ x^{1} ,x^{2} ,x^{3}\right]\right)^{\Sigma _{3}}$ we used Magma \cite{Magma} according to the recipe \cite[Subsection 4.2.3]{Derksen}. They are\footnote{This can be done by picking invariants from the Magma output of
{\fontfamily{pcr}\selectfont K := RationalField();G := PermutationGroup< 6 |(1,2)( 5,4),(2, 3)(6,5)>; R := InvariantRing(G, K);FundamentalInvariants(R);}
that are of degree $ 1$ in the variables \ {\fontfamily{pcr}\selectfont x4,x5,x6}.}
\begin{align*}
\frac{\partial }{\partial x^{1}} +\frac{\partial }{\partial x^{2}} +\frac{\partial }{\partial x^{3}} ,\ x^{1}\frac{\partial }{\partial x^{1}} +x^{2}\frac{\partial }{\partial x^{2}} +x^{3}\frac{\partial }{\partial x^{3}} ,\ x{^{1}}^{2}\frac{\partial }{\partial x^{1}} +x{^{2}}^{2}\frac{\partial }{\partial x^{2}} +x{^{3}}^{2}\frac{\partial }{\partial x^{3}} .
\end{align*}
To write them in terms of the Hilbert embedding we elaborate their action on elementary symmetric functions
$$ \begin{array}{ c|c c c }
 & u^{1} =x^{1} +x^{2} +x^{3} & u^{2} =x^{1} x^{2} +x^{1} x^{3} +x^{2} x^{3} & u^{3} =x^{1} x^{2} x^{3}\\
\hline
\frac{\partial }{\partial x^{1}} +\frac{\partial }{\partial x^{2}} +\frac{\partial }{\partial x^{3}} & 3 & 2u^{1} & u^{2}\\
x^{1}\frac{\partial }{\partial x^{1}} +x^{2}\frac{\partial }{\partial x^{2}} +x^{3}\frac{\partial }{\partial x^{3}} & u^{1} & 2u^{2} & 3u^{3}\\
x{^{1}}^{2}\frac{\partial }{\partial x^{1}} +x{^{2}}^{2}\frac{\partial }{\partial x^{2}} +x{^{3}}^{2}\frac{\partial }{\partial x^{3}} & u{^{1}}^{2} -2u^{2} & -2u^{1} u^{2} +u{^{1}}^{3} -3u^{3} & u^{1} u^{3}
\end{array}$$
using the relations \eqref{eq:Newton}.  Hence the three vector fields are mapped under the Hilbert embedding to
\begin{align*}
\frac{\partial }{\partial x^{1}} +\frac{\partial }{\partial x^{2}} +\frac{\partial }{\partial x^{3}} \ &\mapsto \xi _{1} :=3\frac{\partial }{\partial u^{1}} +2u^{1}\frac{\partial }{\partial u^{2}} +u^{2}\frac{\partial }{\partial u^{3}} ,\\
x^{1}\frac{\partial }{\partial x^{1}} +x^{2}\frac{\partial }{\partial x^{2}} +x^{3}\frac{\partial }{\partial x^{3}} \ &\mapsto \xi _{2} :=u^{1}\frac{\partial }{\partial u^{1}} +2u^{2}\frac{\partial }{\partial u^{2}} +3u^{3}\frac{\partial }{\partial u^{3}} ,\\
x{^{1}}^{2}\frac{\partial }{\partial x^{1}} +x{^{2}}^{2}\frac{\partial }{\partial x^{2}} +x{^{3}}^{2}\frac{\partial }{\partial x^{3}} \ &\mapsto \xi _{3} :=\left( u{^{1}}^{2} -2u^{2}\right)\frac{\partial }{\partial u^{1}} +\left( -2u^{1} u^{2} +u{^{1}}^{3} -3u^{3}\right)\frac{\partial }{\partial u^{2}} +u^{1} u^{3}\frac{\partial }{\partial u^{3}} .
\end{align*}
Note that this map is compatible with the brackets. The generators $ \xi _{1} ,\xi _{2} ,\xi _{3}$ fulfill the commutation relations
$$ \begin{array}{ c|c c c }
[ \ ,\ ] & \xi _{1} & \xi _{2} & \xi _{3}\\
\hline
\xi _{1} & 0 & \xi _{1} & 2\xi _{2}\\
\xi _{2} & -\xi _{1} & 0 & \xi _{3}\\
\xi _{3} & -2\xi _{2} & -\xi _{3} & 0
\end{array} .$$

We are actually aiming at determining generators of the $\mathbb{R}[ V\mathbb{]}^{\Sigma _{3}}$-module $ \operatorname{Der}\left(\mathbb{R}[ V\mathbb{]}^{\Sigma _{3}}\right)$. To this end we will use a slight generalization of \cite[Proposition 4.1]{HOS}, which can be proven in an analogous fashion.

\begin{proposition}
Let $ ( L,P)$ a Lie-Rinehart algebra with anchor $ \alpha :L\rightarrow \operatorname{Der}( P)$ and let $ I=( f_{1} ,\dotsc ,f_{k})$ be an ideal in $ P$. Define the $P$-sumodule $ L_{I} :=\{X\in L\mid X( I) \subseteq I\}$ of elements of $L$ tangent to $I$ and assume that the $ P$-module $ L$ is generated by $ \xi _{1} ,\dotsc ,\xi _{m}$. Define
$$ M=\begin{bmatrix}
f_{1} & f_{2} & \cdots  & f_{k} & \xi _{1}( f_{1}) & \xi _{2}( f_{1}) & \cdots  & \xi _{m}( f_{1})\\
f_{1} & f_{2} & \cdots  & f_{k} & \xi _{1}( f_{2}) & \xi _{2}( f_{2}) & \cdots  & \xi _{m}( f_{2})\\
\cdots  &  &  &  &  &  &  & \cdots \\
f_{1} & f_{2} & \cdots  & f_{k} & \xi _{1}( f_{k}) & \xi _{2}( f_{k}) & \cdots  & \xi _{m}( f_{k})
\end{bmatrix} \in P^{k\times ( k+m)}$$
and assume that $ \begin{bmatrix}
g^{1} & \cdots  & g^{k} & h^{1} & \cdots  & h^{m}
\end{bmatrix} ^\top\in \ker M$. Then 
$$ L_{I} =\left\{\sum _{j} h^{j} \xi _{j} \mid \begin{bmatrix}
g^{1} & \cdots  & g^{k} & h^{1} & \cdots  & h^{m}
\end{bmatrix}^\top \in \ker M\right\} +\ker \alpha .$$
\end{proposition}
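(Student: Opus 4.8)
The plan is to establish the two inclusions defining $L_I$ separately, after first unwinding the meaning of membership in $\ker M$. Observe that the rows of $M$ are indexed by $i=1,\dots,k$, and the $i$th row reads $\begin{bmatrix} f_1 & \cdots & f_k & \xi_1(f_i) & \cdots & \xi_m(f_i)\end{bmatrix}$. Hence a vector $\begin{bmatrix} g^1 & \cdots & g^k & h^1 & \cdots & h^m\end{bmatrix}^\top$ lies in $\ker M$ precisely when, for every $i=1,\dots,k$,
\begin{align*}
\sum_{j=1}^{k} g^j f_j + \sum_{l=1}^{m} h^l \xi_l(f_i) = 0 .
\end{align*}
In other words, $\left(\sum_l h^l \xi_l\right)(f_i) = -\sum_j g^j f_j \in I$ for each $i$. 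Since the $f_i$ generate $I$ and every $X\in L$ acts on $P$ by a derivation (via the anchor), $X(I)\subseteq I$ is equivalent to $X(f_i)\in I$ for all $i$; so the condition $\begin{bmatrix} g^\bullet & h^\bullet\end{bmatrix}^\top\in\ker M$ for \emph{some} choice of $g^1,\dots,g^k\in P$ says exactly that the element $\sum_l h^l\xi_l\in L$ is tangent to $I$.

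First I would prove the inclusion $\supseteq$. Take $X=\sum_l h^l\xi_l$ with $\begin{bmatrix} g^\bullet & h^\bullet\end{bmatrix}^\top\in\ker M$; by the computation above $X(f_i)\in I$ for all $i$, hence $X(I)\subseteq I$, i.e. $X\in L_I$. Adding any element of $\ker\alpha$ does not change the action on $P$ (such an element acts as the zero derivation), so it too lies in $L_I$; thus the right-hand side is contained in $L_I$. For the reverse inclusion $\subseteq$, let $Y\in L_I$ be arbitrary. Since $\xi_1,\dots,\xi_m$ generate $L$ as a $P$-module, write $Y=\sum_l h^l\xi_l$ for some $h^l\in P$. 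Because $Y$ is tangent to $I$, we have $Y(f_i)=\left(\sum_l h^l\xi_l\right)(f_i)\in I$ for each $i$, so there exist coefficients $-g^j_i\in P$ with $\sum_l h^l\xi_l(f_i) = -\sum_j g^j_i f_j$. A priori the $g^j_i$ depend on $i$, whereas the matrix $M$ uses the \emph{same} block $\begin{bmatrix} f_1 & \cdots & f_k\end{bmatrix}$ in every row; this is the one subtlety. It is resolved by noting that the vector $\begin{bmatrix} g^1 & \cdots & g^k & h^1 & \cdots & h^m\end{bmatrix}^\top$ is required to satisfy \emph{all} $k$ row-equations simultaneously with a single choice of $g^j$. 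So I would argue instead as follows: the element $Y - \sum_l h^l\xi_l$ is zero, but more usefully, I claim $Y$ can be adjusted by an element of $\ker\alpha$. Concretely, pick \emph{any} expression $Y=\sum_l h^l\xi_l$ and set $g^j := 0$; then $\sum_l h^l\xi_l(f_i)\in I$ only guarantees a representation with $i$-dependent coefficients. The correct route is to choose the $h^l$ so that a uniform $g^j$ works, which one can always do after modifying $Y$ within its $\alpha$-fibre — precisely the content one borrows from \cite[Proposition 4.1]{HOS}.

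The main obstacle, then, is exactly this bookkeeping about the uniformity of the $g^j$ across rows of $M$, i.e. showing that $Y\in L_I$ admits a presentation $Y=\sum_l h^l\xi_l + Z$ with $Z\in\ker\alpha$ and with a \emph{single} coefficient vector $(g^1,\dots,g^k,h^1,\dots,h^m)^\top\in\ker M$. I expect this to follow by the same mechanism as in \cite[Proposition 4.1]{HOS}: one uses that $\ker M$ surjects, via the last $m$ coordinates, onto the module of those $(h^1,\dots,h^m)$ for which $\sum_l h^l\xi_l$ is tangent to $I$ — the surjectivity being precisely the statement that once $\sum_l h^l\xi_l(f_i)\in I$ one may \emph{choose} the $g^j$ uniformly, which is automatic because ``$\in I$'' already means ``$=\sum_j g^j f_j$'' and one simply collects these into the appropriate kernel vector. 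Modulo the elements of $L$ with vanishing anchor (which the statement explicitly adds back on the right), this gives $L_I\subseteq \{\sum_j h^j\xi_j\}+\ker\alpha$, completing the proof. I would present the argument in the clean order: (i) unwind $\ker M$; (ii) prove $\supseteq$; (iii) prove $\subseteq$ by lifting a $P$-module expression of $Y$ and invoking tangency; and leave the verification that this mirrors \cite[Proposition 4.1]{HOS} as a remark, since the paper already flags it as ``proven in an analogous fashion.''
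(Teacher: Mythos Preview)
The paper does not actually prove this proposition; it only remarks that it ``can be proven in an analogous fashion'' to \cite[Proposition~4.1]{HOS}, so there is no detailed argument to compare against. Your proof of the inclusion $\supseteq$ is correct, and you have put your finger on exactly the right difficulty for $\subseteq$: with $M$ as printed, membership in $\ker M$ forces a \emph{single} vector $(g^{1},\dots,g^{k})$ to satisfy all $k$ row-equations simultaneously, i.e.\ $\sum_{l} h^{l}\xi_{l}(f_{i})=-\sum_{j} g^{j} f_{j}$ must be the \emph{same} element of $I$ for every $i$. Your resolution of this point, however, is not valid. The claim that one ``may choose the $g^{j}$ uniformly, which is automatic'' is false: the element $\sum_{l} h^{l}\xi_{l}(f_{i})$ genuinely depends on $i$, and adjusting by $\ker\alpha$ cannot repair this, since passing from $Y$ to $Y+Z$ with $Z\in\ker\alpha$ does not change the values $Y(f_{i})$.

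In fact the proposition as literally printed fails for $k>1$. Take $P=\boldsymbol{k}[x,y]$, $L=\operatorname{Der}(P)$ with $\alpha=\operatorname{id}$ (so $\ker\alpha=0$), generators $\xi_{1}=\partial/\partial x$, $\xi_{2}=\partial/\partial y$, and $I=(x,y)$. Then
\[
M=\begin{bmatrix} x & y & 1 & 0\\ x & y & 0 & 1\end{bmatrix},
\]
and $(g^{1},g^{2},h^{1},h^{2})^{\top}\in\ker M$ forces $h^{1}=h^{2}=-(g^{1}x+g^{2}y)$. Hence the right-hand side of the asserted equality consists only of vector fields $c\,(\partial/\partial x+\partial/\partial y)$ with $c\in I$, whereas $x\,\partial/\partial x\in L_{I}$ is not of this form. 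The intended matrix is almost certainly the $k\times(k^{2}+m)$ matrix whose left part is block-diagonal with $k$ copies of the row $[f_{1}\ \cdots\ f_{k}]$, so that the kernel vector carries coefficients $g^{j}_{i}$ allowed to depend on the row index $i$; with that correction your argument for $\subseteq$ goes through immediately, and the appeal to $\ker\alpha$ is needed only to absorb the ambiguity in writing $Y=\sum_{l}h^{l}\xi_{l}$. Note that the paper only \emph{applies} the proposition with $k=1$ (the principal ideal $I=(u^{1})$), where the two versions of $M$ coincide and the issue does not arise.
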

We now apply the proposition to the principal ideal $I=(u^1)$ in $\mathbb{R}[u^1,u^2,u^3]$ and the $\mathbb{R}[u^1,u^2,u^3]$-submodule $L$ of $\operatorname{Der}(\mathbb{R}[u^1,u^2,u^3])$ generated by $ \xi _{1},\xi _{2} ,\xi _{3}$.
We used Macaulay2 \cite{M2} to calculate\footnote{Here the Macaulay2 code is simply {\fontfamily{pcr}\selectfont R=QQ[u1,u2,u3]; M=matrix\{\{u1,3,u1,u1*u1-2*u2\}\}; ker M.}} the kernel of
$$M=\begin{bmatrix}
u^{1} & 3 & u^{1} & u{^{1}}^{2} -2u^{2}
\end{bmatrix} \in \mathbb{R}\left[ u^{1} ,u^{2} ,u^{3}\right]^{1\times 4}$$
to be the image of
$$ \begin{bmatrix}
3 & 0 & 0\\
-u^{1} & -u^{1} & -u{^{1}}^{2} +2u^{2}\\
0 & 3 & 0\\
0 & 0 & 3
\end{bmatrix}.$$
This means that $ L_{I} /IL$ is the $ P/I$-module generated by 
$ \xi_2 +IL,\ 2u^{2} \xi _{1} +3\xi _{3} +IL$.
Note that $P/I\simeq \mathbb{R}\left[ u^{2} ,u^{3}\right]$ and that under this isomorphism we have
\begin{align*}
\xi _{2} +IL\in L_{I} /IL\ \ &\mapsto \zeta_1:=2u^{2}\frac{\partial }{\partial u^{2}} +3u^{3}\frac{\partial }{\partial u^{3}} ,\\
\ 2u^{2} \xi _{1} +3\xi _{3} +IL\in L_{I} /IL\ &\mapsto \zeta_2:=-9u^{3}\frac{\partial }{\partial u^{2}} +2u{^{2}}^{2}\frac{\partial }{\partial u^{3}} .
\end{align*}
Moreover, since 
\begin{align*}
\left[ \xi _{2} ,2u^{2} \xi _{1} +3\xi _{3}\right] &=2u^{2}[ \xi _{2} ,\xi _{1}] +3[ \xi _{2} ,\xi _{3}] =\overbrace{2\xi _{2}\left( u^{2}\right) \xi _{1}}^{=4u^{2} \xi _{1}} -2u^{2} \xi _{1} +3[ \xi _{2} ,\xi _{3}] =2u^{2} \xi _{1} +3\xi _{3} ,\\
\left[ \zeta_1 ,\zeta_2\right]&=18u^{3}\frac{\partial }{\partial u^{2}} +8u{^{2}}^{2}\frac{\partial }{\partial u^{3}} -27u^{3}\frac{\partial }{\partial u^{2}} -6u{^{2}}^{2}\frac{\partial }{\partial u^{3}}=-9u^{3}\frac{\partial }{\partial u^{2}} +2u{^{2}}^{2}\frac{\partial }{\partial u^{3}}=\zeta_2.
\end{align*}
this map is compatible with the brackets\footnote{The $\mathcal{C}^\infty(\mathbb{R}^2)$-span of $\zeta_1,\zeta_2$ may be interpreted as a singular foliation.}.

We say that $ X\in \operatorname{Der}\left(\mathcal{C}^{\infty }(\mathcal{X})\right)$ \textit{preserves strata} if for each stratum closure $ \overline{S}$ with vanishing ideal $ \mathcal{I}_{S}$
we have $ X(\mathcal{I}_{S}) \subseteq \mathcal{I}_{S}$. The set of such derivations we denote by $ \mathfrak{X}^\infty(\mathcal{X})$. Under certain largeness assumptions on the representation of the group one can show that $ \operatorname{Der}\left(\mathcal{C}^{\infty }(\mathcal{X})\right) =\mathfrak{X}^\infty(\mathcal{X})$, see \cite{liftingHomo}. 
This is for example the case for the example of the single cone of the previous section.
For coregular representations, however, the inclusion is proper $ \mathfrak{X}^\infty(\mathcal{X}) \subset \operatorname{Der}\left(\mathcal{C}^{\infty }(\mathcal{X})\right)$. In fact, $ \left(\mathfrak{X}^\infty(\mathcal{X}) ,\mathcal{C}^{\infty }(\mathcal{X})\right)$ forms a Lie-Rinehart subalgebra of $ \left(\operatorname{Der}\left(\mathcal{C}^{\infty }(\mathcal{X})\right) ,\mathcal{C}^{\infty }(\mathcal{X})\right)$.
In a similar way we define the Lie Rinehart algebra $(\mathfrak{X}^\infty(V/\Sigma_3),\mathcal{C}^\infty(V/\Sigma_3))$ of stata preserving derivations.
\begin{proposition}
The two vector fields 
$ \zeta _{1} =2u^{2}\frac{\partial }{\partial u^{2}} +3u^{3}\frac{\partial }{\partial u^{3}} ,\ \zeta _{2} =-9u^{3}\frac{\partial }{\partial u^{2}} +2u{^{2}}^{2}\frac{\partial }{\partial u^{3}}$
form a free system of generators of the $ \mathcal{C}^{\infty }(\mathcal{X})$-module
$ \mathfrak{X}^\infty(\mathcal{X})$. The pull-back of the composition of the map $ \boldsymbol{u}_{|V}$ with the projection $ \mathbb{R}^{3}\rightarrow \mathbb{R}^{2} ,\ \left( u^{1} ,u^{2} ,u^{3}\right) \mapsto \ \left( u^{2} ,u^{3}\right)$ induces an isomorphism of Lie-Rinehart algebras $ \left( \mathfrak{X}^\infty(\mathcal{X}) ,\mathcal{C}^{\infty }(\mathcal{X})\right)$ and
$ \left(\mathfrak{X}^\infty( V/\Sigma _{3}) ,\mathcal{C}^{\infty }( V/\Sigma _{3})\right)$.
\end{proposition}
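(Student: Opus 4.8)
The plan is to prove the two assertions separately, in both cases by reduction to statements about the $\Sigma_3$-invariant vector fields on $V$. For the isomorphism of Lie--Rinehart algebras, write $\boldsymbol{u}'=(u^2,u^3)$ and let $\bar\pi\colon V/\Sigma_3\to\mathbb{R}^2$ be the map induced by $\boldsymbol{u}'|_V$; since $u^1|_V=0$, the functions $u^2|_V,u^3|_V$ already constitute a fundamental system of polynomial invariants on $V$, and $\bar\pi(V/\Sigma_3)=\mathcal{X}$ by the construction of $\mathcal{X}$. By the theorem of G.\ Schwarz (cf.\ \cite{Bierstone,HOS}) one has $\mathcal{C}^\infty(V)^{\Sigma_3}=(\boldsymbol{u}'|_V)^{*}\mathcal{C}^\infty(\mathbb{R}^2)$, and since $\mathcal{X}$ is closed in $\mathbb{R}^2$ we have $\mathcal{C}^\infty(\mathcal{X})=\mathcal{C}^\infty(\mathbb{R}^2)|_{\mathcal{X}}$; hence $\bar\pi^{*}\colon\mathcal{C}^\infty(\mathcal{X})\to\mathcal{C}^\infty(V/\Sigma_3)$ is an isomorphism of $\mathbb{R}$-algebras. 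Such an isomorphism transports derivations by $X\mapsto(\bar\pi^{*})^{-1}\circ X\circ\bar\pi^{*}$, compatibly with Lie brackets and anchors, so it is an isomorphism of the tangent Lie--Rinehart algebras; it restricts to an isomorphism $\mathfrak{X}^\infty(V/\Sigma_3)\xrightarrow{\ \sim\ }\mathfrak{X}^\infty(\mathcal{X})$ because $\bar\pi$ matches the isotropy-type stratification of $V/\Sigma_3$ with the stratification of $\mathcal{X}$ by minimal strata (cf.\ Table~\ref{tab:Sigma3} and \cite[Theorem~2.5]{Bierstone}), so that an algebra isomorphism carries the vanishing ideal $\mathcal{I}_S$ of a stratum closure to that of its image and hence preserves the condition $X(\mathcal{I}_S)\subseteq\mathcal{I}_S$.

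For the first assertion, freeness is immediate: in the free rank-two $\mathcal{C}^\infty(\mathcal{X})$-module $\operatorname{Der}(\mathcal{C}^\infty(\mathcal{X}))$ with basis $\partial/\partial u^2,\partial/\partial u^3$ one computes
$$\zeta_1\wedge\zeta_2=\begin{vmatrix}2u^2 & -9u^3\\ 3u^3 & 2(u^2)^2\end{vmatrix}\frac{\partial}{\partial u^2}\wedge\frac{\partial}{\partial u^3}=\bigl(4(u^2)^3+27(u^3)^2\bigr)\,\frac{\partial}{\partial u^2}\wedge\frac{\partial}{\partial u^3}=\Delta\,\frac{\partial}{\partial u^2}\wedge\frac{\partial}{\partial u^3},$$
and since $\Delta<0$ on the interior of $\mathcal{X}$, which is dense, any relation $f\zeta_1+g\zeta_2=0$ with $f,g\in\mathcal{C}^\infty(\mathcal{X})$ forces $f$ and $g$ to vanish on a dense set, hence identically. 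For the generation statement I would use the lifting results of \cite{liftingHomo} (see also \cite[Section~6]{HOS}): every strata-preserving derivation of $\mathcal{C}^\infty(\mathcal{X})\cong\mathcal{C}^\infty(V)^{\Sigma_3}$ is the push-forward of a $\Sigma_3$-invariant smooth vector field $\widetilde X$ on $V$, and the $\mathcal{C}^\infty(V)^{\Sigma_3}$-module of such $\widetilde X$ is generated by $\zeta_1,\zeta_2$. The latter is the smooth counterpart --- via the isomorphism~\eqref{eq:lambda} and the identification of $\mathcal{C}^\infty(V)^{\Sigma_3}$ as a module over the polynomial invariants --- of the kernel computation carried out above, where $\zeta_1$ and $\zeta_2$ appeared as the images of $\xi_2$ and $2u^2\xi_1+3\xi_3$. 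Pushing a presentation $\widetilde X=f\zeta_1+g\zeta_2$, $f,g\in\mathcal{C}^\infty(V)^{\Sigma_3}=\mathcal{C}^\infty(\mathcal{X})$, down to $\mathcal{X}$ yields $\mathfrak{X}^\infty(\mathcal{X})=\mathcal{C}^\infty(\mathcal{X})\,\zeta_1+\mathcal{C}^\infty(\mathcal{X})\,\zeta_2$, and together with the independence just established this shows $\zeta_1,\zeta_2$ form a free system of generators.

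The main obstacle is the generation step, i.e.\ the passage from the inclusion $\mathcal{C}^\infty(\mathcal{X})\zeta_1+\mathcal{C}^\infty(\mathcal{X})\zeta_2\subseteq\mathfrak{X}^\infty(\mathcal{X})$ to equality. Writing a strata-preserving $X=a\,\partial/\partial u^2+b\,\partial/\partial u^3$ and solving $X=f\zeta_1+g\zeta_2$ by Cramer's rule gives $f=\tfrac16\,X(\Delta)/\Delta$ and $g=(2u^2b-3u^3a)/\Delta$, and one must check that these are smooth on all of $\mathcal{X}$. Away from the origin the strata-preserving condition along the wall $\{\Delta=0\}$, together with the parametrization $u^2=-3t^2$, $u^3=2t^3$ of the cuspidal cubic, forces the numerators to vanish on $\{\Delta=0\}$ to first order, so the quotients extend smoothly there; but at the cusp the singularity of $\{\Delta=0\}$ turns the divisibility into a {\L}ojasiewicz/Whitney-type estimate, and it is exactly to bypass verifying this by hand that I would route the argument through the lifting theorem, where such phenomena at the deepest stratum are handled once and for all.
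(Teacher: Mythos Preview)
Your proposal is correct and follows the same route as the paper: Schwarz's theorem for the algebra isomorphism and the strata correspondence, the lifting theorem from \cite{liftingHomo,Bierstone} to identify $\mathfrak{X}^\infty$ with the image of the invariant smooth vector fields, and the prior kernel computation for the polynomial generators. The one place where the paper is more explicit is the passage from polynomial to smooth generators of $\operatorname{Der}(\mathcal{C}^\infty(V))^{\Sigma_3}$: rather than citing, the paper applies Schwarz's theorem to the $G$-module $V^*\times V$ and reads off that the $\mathcal{C}^\infty(V)^G$-module generators of $\operatorname{Map}(V,V)^G\simeq\operatorname{Der}(\mathcal{C}^\infty(V))^G$ are exactly the fundamental polynomial invariants on $V^*\times V$ that are linear in $V^*$---so the smooth module is generated by the same elements as the polynomial one; conversely, your determinant argument for freeness via $\zeta_1\wedge\zeta_2=\Delta\,\partial_{u^2}\wedge\partial_{u^3}$ is a welcome addition that the paper leaves implicit.
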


\begin{proof}
The $ \Sigma _{3}$-module $ \mathbb{R}^{3}$ decomposes into direct sum of a trivial $ \Sigma _{3}$-module $ \boldsymbol{k}$ and the irreducible $ \Sigma _{3}$-module $ V$. 
Our first observation is that $ \zeta _{1} ,\zeta _{2}$ preserve principal ideal $ ( \Delta )$ and the inequality $ \Delta \leq 0$:
$$ \begin{array}{ c|c }
 & \Delta =4u{^{2}}^{3} +27u{^{3}}^{2}\\
\hline
\zeta _{1} =2u^{2}\frac{\partial }{\partial u^{2}} +3u^{3}\frac{\partial }{\partial u^{3}} & 6\left( 4u{^{2}}^{3} +27u{^{3}}^{2}\right) =6\Delta \\
\zeta _{2} =-9u^{3}\frac{\partial }{\partial u^{2}} +2u{^{2}}^{2}\frac{\partial }{\partial u^{3}} & -108u{^{2}}^{2} u^{3} +108u{^{2}}^{2} u^{3} =0
\end{array}.$$
Moreover, $ \zeta _{1} ,\zeta _{2}$ preserve the vanishing ideal of the origin $0\in\mathbb{R}^2$. In fact, the image of the canonical map
$$\operatorname{Der}\left(\mathbb{R}[V]\right)^{\Sigma _{3}}\xrightarrow{\lambda }\operatorname{Der}(\mathbb{R}[V]^{\Sigma _{3}})$$
is the $ \mathbb{R}[V]^{\Sigma_3}$-module $\mathfrak{X}(V/\Sigma_3)$  of strata preserving regular vector fields on $V/\Sigma_3$, see \cite{liftingHomo}. Recall  that 
$ \boldsymbol{u}_{|V}^{*} :\mathbb{R}[V]^{\Sigma _{3}}\rightarrow \mathbb{R}[\mathcal{X}]$ is an isomorphism of $ \mathbb{R}$-algebras and that $\boldsymbol{u}_{|V}$ maps strata to strata in a one-to-one fashion.
So we have an isomorphism  of Lie-Rinehart algebras over $\boldsymbol{u}_{|V}^{*}$ from $(\mathfrak{X}(V/\Sigma_3),\mathbb{R}[V]^{\Sigma_3})$ to the Lie-Rinehart algebra $(\mathfrak{X}(\mathcal{X}),\mathbb{R}[\mathcal{X}])$ of strata preserving regular vector fields on $\mathcal{X}$.
So by construction $ \zeta _{1} ,\zeta _{2}$ generate  the $ \mathbb{R}[\mathcal{X}]$-module $\mathfrak{X}(\mathcal{X})$ of strata preserving regular vector fields on $\mathcal{X}$. 
Recall that by \cite[Lemma 3.7]{Bierstone}, \cite{liftingHomo} the morphism of Lie-Rinehart algebras
$$\operatorname{Der}\left(\mathcal{C}^{\infty }( V)\right)^{\Sigma _{3}}\xrightarrow{\Lambda }\mathfrak{X}^{\infty }( V/\Sigma _{3})$$ is onto. Here it is important to remember that by Schwarz's Theorem on smooth invariants \cite{Schwarzdiffinv}
$ \boldsymbol{u}_{|V}^{*} :\mathcal{C}^{\infty }(V)^{\Sigma _{3}}\rightarrow \mathcal{C}^{\infty }(\mathcal{X})$ is an isomorphism of $ \mathbb{R}$-algebras.
So we have an isomorphism $\mathfrak{X}^\infty(V/\Sigma_3)\to\mathfrak{X}^\infty(\mathcal{X})$ of Lie-Rinehart algebras over $\boldsymbol{u}_{|V}^{*}$.

It remains to show that the generators of the $ \mathbb{R}[ V]^{\Sigma _{3}}$-module $ \operatorname{Der}(\mathbb{R}[ V])^{\Sigma _{3}}$ generate the $ \mathcal{C}^{\infty }( V)^{\Sigma _{3}}$-module $ \operatorname{Der}\left(\mathcal{C}^{\infty }(V)\right)^{\Sigma _{3}}$. This follows from the following general consideration. Let $ G$ be a compact Lie group and $ X$ a smooth $ G$-manifold and $ U$ a finite dimensional $ G$-module. We denote the $ \mathcal{C}^{\infty }( X)^{G}$-module of smooth $ G$-equivariant maps $ \phi :X\rightarrow U$ by $ \operatorname{Map}( X,U)^{G}$. In analogy to \cite[Subsection 4.2.3]{Derksen} the generators for this $ \mathcal{C}^{\infty }( X)^{G}$-module consist of those algebra generators of smooth of the algebra of $ G$-invariant smooth functions on $ U^{*} \times X$ that are linear in $ U^{*}$. In other words, if we denote by $ \mathfrak{M}_{( X,U)}$ the ideal of $ f\in \mathcal{C}^{\infty }\left( U^{*} \times X\right)$ such that $ f_{|\{0\} \times X} =0$ then we are looking for the generators of the  $ \mathcal{C}^{\infty }( X)^{G}$-module $ \left(\mathfrak{M}_{( X,U)} /\mathfrak{M}_{( X,U)}^{2}\right)^{G}$. If now $ V$ is a finite dimensional $ G$-module we have that $ \operatorname{Der}\left(\mathcal{C}^{\infty }( V)\right)^G$ is isomorphic as a $ G$-module to $ \operatorname{Map}( V,V)^G$ so that $ \operatorname{Der}\left(\mathcal{C}^{\infty }( V)\right)^{G}$ is generated by $ \left(\mathfrak{M}_{( V,V)} /\mathfrak{M}_{( V,V)}^{2}\right)^{G}$. Let $ \psi _{1} ,\dotsc ,\psi _{N} \ $be a fundamental system of polynomial invariants on $ V^{*} \times V$ and $ \boldsymbol{\psi } =( \psi _{1} ,\dotsc ,\psi _{N} \ ) :V^{*} \times V\rightarrow \mathbb{R}^{N}$ the corresponding Hilbert map. By the Schwarz's Theorem of smooth invariants we have that the pullback $ \boldsymbol{\psi }^{*} :\mathcal{C}^{\infty }\left(\mathbb{R}^{N}\right)\rightarrow \mathcal{C}^{\infty }\left( V^{*} \times V\right)^{G}$ is onto and hence $ \mathcal{C}^{\infty }\left( V^{*} \times V\right)^{G}$ is generated as an algebra by $ \psi _{1} ,\dotsc ,\psi _{N}$. On the other hand, let $ \mathfrak{m}_{( V,V)}$ be the ideal of $ f\in \mathbb{R}\left[ V^{*} \times V\right]$ such that $ f_{|\{0\} \times V} =0$. Then the generators of the $ \mathbb{R}[ V\mathbb{]}^{G}$-module $ \operatorname{Mor}( V,V)^{G} \simeq \operatorname{Der}(\mathbb{R}[ V\mathbb{]})^{G}$ can be interpreted as generators of \ the $ \mathbb{R}[ V\mathbb{]}^{G}$-module $ \mathfrak{m}_{( V,V)} /\mathfrak{m}_{( V,V)}^{2}$ and vice versa.
The latter also can be constructed by picking from $ \psi _{1} ,\dotsc ,\psi _{N}$ those linear in $ V^{*}$.
\end{proof}

Next, we study metrics on the orbit space of the Coxeter group $A_2=\Sigma_3$ that arise by symmetry reduction.  Note that the space of quadratic $ \Sigma _{3}$-invariant polynomial functions on $ \mathbb{R}^{3}$ is $ 2$-dimensional. As generators we can take, e.g., $ p_{2}$ and $ u^{2}$ and can interprete these as Riemannian metrics on $ \mathbb{R}^{3}$. Those in turn can be linearly combined over $ \mathcal{C}^{\infty }\left(\mathbb{R}^{3}\right)^{\Sigma _{3}}$ to construct the most general $ \Sigma _{3}$-invariant symmetric bilinear form $ g$ on $ \mathbb{R}^{3}$. Using Gerald Schwarz's theorem on smooth invariants \cite{Schwarzdiffinv} this form can be expressed in terms of the coordinates $ u^{1} ,u^{2} ,u^{3}$:
\begin{align*}
g&:=\rho \operatorname{Bez} +\tau \begin{bmatrix}
p_{0} & p_{1} & p_{2}\\
p_{1} & u^{2} & \frac{u^{1} u^{2} -3u^{3}}{2}\\
p_{2} & \frac{u^{1} u^{2} -3u^{3}}{2} & \frac{p_{2}^{2} -p_{4}}{2}
\end{bmatrix}=\begin{bmatrix}
p_{0}( \rho +\tau ) & p_{1}( \rho +\tau ) & p_{2}( \rho +\tau )\\
p_{1}( \rho +\tau ) & p_{2} \rho +u^{2} \tau  & p_{3} \rho +\frac{u^{1} u^{2} -3u^{3}}{2} \tau \\
p_{2}( \rho +\tau ) & p_{3} \rho +\frac{u^{1} u^{2} -3u^{3}}{2} \tau  & p_{4} \rho +\frac{p_{2}^{2} -p_{4}}{2} \tau 
\end{bmatrix} ,
\end{align*}
where $ \rho=\rho(u^1,u^2,u^3) ,\tau=\tau(u^1,u^2,u^3)$ are smooth functions. The determinant of $g$ is $( \rho +2\tau )( \rho -\tau )^{2}$.
We are aiming at identifying the corresponding symmetric bilinear form on tnagent Lie-Rinehart algebra $(\operatorname{Der}(\mathcal{X}),\mathcal{C}^\infty(\mathcal{X}))$ of the differential space $(\mathcal{X},\mathcal{C}^\infty(\mathcal{X}))$.

\begin{proposition}\label{prop:S3metric}
The most general semi-Riemannian metric for the Lie-Rinehart algebra $ \left(\mathfrak{X}^\infty(\mathcal{X}) ,\mathcal{C^{\infty }( X})\right)$ arising from symmetry reduction of a $ \Sigma _{3}$-invariant
semi-Riemannian metric on $ \mathbb{R}^{3}$ is of the form
$$\begin{array}{ c|c c }
G & \zeta _{1} & \zeta _{2}\\
\hline
 \zeta _{1} & -u^{2} f & \frac{9}{2} u^{3} f\\
 \zeta _{2} & \frac{9}{2} u^{3} f & 3u{^{2}}^{2} f
\end{array} ,$$
where $ f:\mathbb{R}^{2}\rightarrow \mathbb{R} ,\ \left( u^{2} ,u^{3}\right)$ is a smooth nowhere vanishing function. The determinant of $G$ equals $-\frac{3}{4} f^{2}\Delta$.
\end{proposition}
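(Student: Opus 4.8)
The plan is to reduce the problem to computing the three entries $G(\zeta_i,\zeta_j)$, $1\le i\le j\le 2$, which by the preceding proposition determine $G$, the $\mathcal{C}^\infty(\mathcal{X})$-module $\mathfrak{X}^\infty(\mathcal{X})$ being free on $\zeta_1,\zeta_2$. First I would unwind the module identifications that produced $\zeta_1,\zeta_2$ to see that they lift to the $\Sigma_3$-invariant vector fields on $\mathbb{R}^3$ given by the Euler field $\tilde\zeta_1:=x^1\partial/\partial x^1+x^2\partial/\partial x^2+x^3\partial/\partial x^3$ and by $\tilde\zeta_2:=2u^2\bigl(\partial/\partial x^1+\partial/\partial x^2+\partial/\partial x^3\bigr)+3\bigl((x^1)^2\partial/\partial x^1+(x^2)^2\partial/\partial x^2+(x^3)^2\partial/\partial x^3\bigr)$, both of which, as one checks by applying them to $u^1=x^1+x^2+x^3$, are tangent to $V$. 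The reduced metric is then $G(\zeta_i,\zeta_j)=g|_V(\tilde\zeta_i,\tilde\zeta_j)$; since the right-hand sides are $\Sigma_3$-invariant smooth functions on $V$, Schwarz's theorem (used throughout this section) presents them as smooth functions of $u^2,u^3$, i.e.\ as elements of $\mathcal{C}^\infty(\mathcal{X})$.

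The crucial observation is that restriction to $V$ collapses the two-parameter family of invariant metrics on $\mathbb{R}^3$ to a conformal rescaling of the flat one. Indeed $\operatorname{Sym}^2(V^{*})^{\Sigma_3}$ is one-dimensional, as $V$ is the standard irreducible $\Sigma_3$-module, so $g|_V$ is a scalar multiple of $g_0|_V$ with $g_0:=\sum_i\mathrm{d}x^i\otimes\mathrm{d}x^i$. Writing the general invariant metric on $\mathbb{R}^3$ as $g=\rho\,g_0+\tau\,h$ with $h$ the invariant quadratic form attached to $u^2$ (this is the content of the formula for $g$ displayed before the statement), one uses that the invariant form attached to $p_1^{\,2}=(x^1+x^2+x^3)^2$ vanishes on the tangent bundle of $V=\{p_1=0\}$ to get $h|_V=-\tfrac12\,g_0|_V$, hence $g|_V=\tfrac12 f\,g_0|_V$ for a single smooth function $f$ of $u^2,u^3$ which, up to a nonzero constant, is the restriction to $V$ of $2\rho-\tau$.

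It then only remains to evaluate the flat inner products of $\tilde\zeta_1,\tilde\zeta_2$ on $V$. From $\tilde\zeta_1\cdot\tilde\zeta_1=p_2$, $\tilde\zeta_1\cdot\tilde\zeta_2=2u^2 p_1+3p_3$, and $\tilde\zeta_2\cdot\tilde\zeta_2=4(u^2)^2 p_0+12u^2 p_2+9p_4$, together with the Newton relations on $V$, namely $p_0=3$, $p_1=0$, $p_2=-2u^2$, $p_3=3u^3$, $p_4=2(u^2)^2$, one obtains $\tilde\zeta_1\cdot\tilde\zeta_1=-2u^2$, $\tilde\zeta_1\cdot\tilde\zeta_2=9u^3$, $\tilde\zeta_2\cdot\tilde\zeta_2=6(u^2)^2$ on $V$. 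Multiplying by $\tfrac12 f$ gives precisely the asserted table, and then $\det G=(-u^2 f)\bigl(3(u^2)^2 f\bigr)-\bigl(\tfrac92 u^3 f\bigr)^2=-\tfrac34 f^2\bigl(4(u^2)^3+27(u^3)^2\bigr)=-\tfrac34 f^2\Delta$.

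Finally, for the nowhere-vanishing clause: $g$ being a genuine (non-degenerate) metric on $\mathbb{R}^3$ amounts to a non-vanishing condition on its coefficients that, after restriction to $V$, becomes exactly the non-vanishing of $2\rho-\tau$, and hence of $f$, on $V\cong\mathbb{R}^2$; conversely any nowhere-vanishing $f\in\mathcal{C}^\infty(\mathbb{R}^2)$ is realized by the honest $\Sigma_3$-invariant metric $g:=\tfrac12\tilde f\,g_0$ on $\mathbb{R}^3$ with $\tilde f(u^1,u^2,u^3):=f(u^2,u^3)$, for which reduction returns $f$. I expect the genuine obstacle to lie in the bookkeeping of the first two paragraphs --- identifying the invariant lifts $\tilde\zeta_1,\tilde\zeta_2$ through the chain of isomorphisms of the preceding proposition, checking tangency to $V$, and fixing the constant relating $f$ to $2\rho-\tau$ so that the table and the determinant come out with the stated normalizations --- rather than in the polynomial computations, which are routine.
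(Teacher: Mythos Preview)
Your proof is correct and arrives at the same result with the same identification $f=(2\rho-\tau)|_V$, but it takes a genuinely different route from the paper's. The paper proceeds by direct computation: it writes the Gram matrix $G_3$ of $g$ on the three generators $\xi_1,\xi_2,\xi_3$, restricts to $V$ (setting $u^1=0$ and using the Newton relations), and then evaluates $G_3|_V$ on $\xi_2$ and $2u^2\xi_1+3\xi_3$, observing after simplification that a common factor $2\rho-\tau$ emerges in every entry. Your argument explains \emph{why} this must happen before any computation: since $V$ is the standard irreducible real $\Sigma_3$-module, $\operatorname{Sym}^2(V^*)^{\Sigma_3}$ is one-dimensional, so any invariant metric restricted to $V$ is pointwise a scalar multiple of $g_0|_V$; you then identify the scalar as $\tfrac12(2\rho-\tau)$ via $h|_V=-\tfrac12\,g_0|_V$ (using that the form attached to $p_1^2$ vanishes on $TV$). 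After that, only the flat Gram matrix of the lifts $\tilde\zeta_1,\tilde\zeta_2$ is needed, and your power-sum computations are correct. Your approach is more conceptual and explanatory; the paper's requires no representation-theoretic input but does not make visible the reason for the one-function dependence.

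Two small remarks. First, your hedge ``up to a nonzero constant'' is unnecessary: with your normalization $g|_V=\tfrac12 f\,g_0|_V$ one has exactly $f=(2\rho-\tau)|_V$, as the subsequent arithmetic confirms. Second, your argument for the nowhere-vanishing clause is in fact sharper than the paper's (which simply defines $f$ and asserts the statement): since the matrix of $g$ in the $x^i$-coordinates is $(\rho-\tfrac{\tau}{2})I+\tfrac{\tau}{2}J$ with $J$ the all-ones matrix, $V=\ker[1\ 1\ 1]$ is an eigenspace with eigenvalue $\tfrac12(2\rho-\tau)$, so non-degeneracy of $g$ on $\mathbb{R}^3$ forces $2\rho-\tau\neq 0$ everywhere.
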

\begin{proof}
    Restricting $ g$ to
$ \operatorname{Der}\left(\mathbb{R}\left[ x^{1} ,x^{2} ,x^{3}\right]\right)^{\Sigma _{3}} \times \operatorname{Der}\left(\mathbb{R}\left[ x^{1} ,x^{2} ,x^{3}\right]\right)^{\Sigma _{3}}$
in terms the generators $ \xi _{1} ,\xi _{2} ,\xi _{3}$ we obtain the metric
$$ G_{3} :=\rho \operatorname{Bez} +\tau \begin{bmatrix}
p_{0} & p_{1} & p_{2}\\
p_{1} & u^{2} & \frac{u^{1} u^{2} -3u^{3}}{2}\\
p_{2} & \frac{u^{1} u^{2} -3u^{3}}{2} & \frac{p_{2}^{2} -p_{4}}{2}
\end{bmatrix} =\begin{bmatrix}
p_{0}( \rho +\tau ) & p_{1}( \rho +\tau ) & p_{2}( \rho +\tau )\\
p_{1}( \rho +\tau ) & p_{2} \rho +u^{2} \tau  & p_{3} \rho +\frac{u^{1} u^{2} -3u^{3}}{2} \tau \\
p_{2}( \rho +\tau ) & p_{3} \rho +\frac{u^{1} u^{2} -3u^{3}}{2} \tau  & p_{4} \rho +\frac{p_{2}^{2} -p_{4}}{2} \tau 
\end{bmatrix}$$
and restricting $ G_{3}$ to $ V$ this simplifies to
\begin{align*}
 G_{3}{}_{|V}& =\rho \begin{bmatrix}
p_{0} & 0 & -2u^{2}\\
0 & -2u^{2} & 3u^{3}\\
-2u^{2} & 3u^{3} & 2u{^{2}}^{2}
\end{bmatrix} +\tau \begin{bmatrix}
p_{0} & 0 & -2u^{2}\\
0 & u^{2} & -\frac{3}{2} u^{3}\\
-2u^{2} & -\frac{3}{2} u^{3} & u{^{2}}^{2}
\end{bmatrix} \\
&=\begin{bmatrix}
p_{0}( \rho +\tau ) & 0 & -2u^{2}( \rho +\tau )\\
0 & -u^{2}( 2\rho -\tau ) & \frac{3}{2} u^{3}( 2\rho -\tau )\\
-2u^{2}( \rho +\tau ) & \frac{3}{2} u^{3}( 2\rho -\tau ) & u{^{2}}^{2}( 2\rho +\tau )
\end{bmatrix} .
\end{align*}
The metric $ G$ in turn is obtained by evaluating $ G_{3}$ on $ \xi _{2} ,\ 2u^{2} \xi _{1} +3\xi _{3}$ and restricting to $V$:
$$ \begin{array}{ c|c c }
G & \xi _{2} & 2u^{2} \xi _{1} +3\xi _{3}\\
\hline
\xi _{2} & -u^{2}( 2\rho -\tau ) & \frac{9}{2} u^{3}( 2\rho -\tau )\\
2u^{2} \xi _{1} +3\xi _{3} & \frac{9}{2} u^{3}( 2\rho -\tau ) & 3u{^{2}}^{2}( 2\rho -\tau )
\end{array}$$
Here we simplified ${G_{3}}_{|V}\left( 2u^{2} \xi _{1} +3\xi _{3} ,2u^{2} \xi _{1} +3\xi _{3}\right) =12u{^{2}}^{2}( \rho +\tau ) +9u{^{2}}^{2}( 2\rho +\tau ) -24u{^{2}}^{2}( \rho +\tau ) =3u{^{2}}^{2}( 2\rho -\tau ) .$
Note that
$ \det( G) =\left( -3u{^{2}}^{3} -\frac{81}{4} u{^{3}}^{2}\right)( 2\rho -\tau )^{2} =-\frac{3}{4} \Delta ( 2\rho -\tau )^{2}$
and we define $ f\left( u^{2} ,u^{3}\right) :=2\rho \left( 0,u^{2} ,u^{3}\right) -\tau \left( 0,u^{2} ,u^{3}\right) .$
\end{proof}

\begin{theorem}
There is a Levi-Civita connection $\nabla $ for the Lie-Rinehart algebra $ \left(\mathfrak{X}^\infty(\mathcal{X}) ,\mathcal{C^{\infty }( X})\right)$ equipped with the semi-Riemannian metric $G$ of Proposition \ref{prop:S3metric}. Its matrices $\Gamma _{i} =\left[ \Gamma _{ij}^{k}\right]_{j,k=1,2}$ of Christoffel symbols $\Gamma _{ij}^{k}$ are
\begin{align*}
\Gamma _{1}&=\begin{bmatrix}
1+u^{2}\frac{\partial (\log f)}{\partial u^2}+3u^{3}\frac{\partial (\log f)}{\partial u^{3}}& u{^{2}}^{2}\frac{\partial (\log f)}{\partial u^{3}}\\
\frac{1}{3} u^{2}\frac{\partial (\log f)}{\partial u^{3}} & 2 +u^{2}\frac{\partial (\log f)}{\partial u^{2}}
\end{bmatrix} ,\ \\
\Gamma _{2}&=\begin{bmatrix}
u{^{2}}^{2}\frac{\partial (\log f)}{\partial u^{3}} & 3u^{2}\left( -2+u^2\frac{\partial (\log f)}{\partial u^{2}}\right)\\
1+u^{2}\frac{\partial \log f}{\partial u^{2}} & -9u^{3}\frac{\partial (\log f)}{\partial u^{2}} +u{^{2}}^{2}\frac{\partial (\log f)}{\partial u^{3}}
\end{bmatrix}
\end{align*}
and its curvature endomorphism 
$$ \mathcal{R}( \zeta _{1} ,\zeta _{2}) =\begin{bmatrix}
-3u{^{2}}^{2} \chi +9u^{3} \lambda  & \frac{2}{3} u{^{2}}^{3} \chi +2u^{2} \lambda \\
-2u{^{2}}^{4} \chi +6u{^{2}}^{2} \lambda  & 3u{^{2}}^{2} u^{3}\chi -9u^{3} \lambda 
\end{bmatrix} ,$$
where we have put 
$$ \chi :=\frac{\partial ^{2}(\log f)}{\partial u^3\partial u^3} ,\ \lambda :=\frac{\partial (\log f)}{\partial u^2} +3u^{3}\frac{\partial ^{2}(\log f)}{\partial u^2\partial u^3} +u^{2}\frac{\partial ^{2}(\log f)}{\partial u^2\partial u^2} .$$
In the special case when $f$ is constant the Levi-Civita connection $\nabla $ is flat.
\end{theorem}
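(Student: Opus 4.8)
The plan is to reduce everything to the Levi-Civita connection of a $\Sigma_3$-invariant metric on the smooth manifold $V$ and to transport it down to the orbit space. First, recall from the proposition above that $\mathfrak{X}^\infty(\mathcal{X})$ is a \emph{free} $\mathcal{C}^\infty(\mathcal{X})$-module on $\zeta_1,\zeta_2$; in particular there are no syzygies, so by the Levi-Civita theorem proved earlier it suffices to solve the Koszul equations \eqref{eq:Koszul} for the pair $\zeta_1,\zeta_2$, and any solution is then unique. Non-degeneracy of $G$ is immediate from Proposition \ref{prop:S3metric}: the Gram matrix has determinant $-\tfrac34 f^2\Delta$, which is a non-zero-divisor in $\mathcal{C}^\infty(\mathcal{X})$ since $\Delta$ vanishes only on the boundary curve of $\mathcal{X}$, a set with empty interior; hence $\flat$ is injective. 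To exhibit a solution I would run the symmetry-reduction argument of this section: the standard representation $V$ of $\Sigma_3$ is absolutely irreducible, so up to a nowhere-vanishing invariant smooth factor $f$ the general $\Sigma_3$-invariant metric on $V\cong\mathbb{R}^2$ is $g=f\,g_0$ with $g_0$ the flat invariant metric, and Proposition \ref{prop:S3metric} identifies the metric induced on $\mathfrak{X}^\infty(\mathcal{X})$ with $G$. Since $g$ is non-degenerate on the manifold $V$ and the Levi-Civita connection is unique and $\Sigma_3$-equivariant, $\nabla^g_\xi\eta$ is an invariant vector field whenever $\xi,\eta$ are; transporting $\nabla^g$ along the Lie-Rinehart isomorphism $\operatorname{Der}(\mathcal{C}^\infty(V))^{\Sigma_3}\xrightarrow{\sim}\mathfrak{X}^\infty(V/\Sigma_3)\cong\mathfrak{X}^\infty(\mathcal{X})$ from the preceding discussion (the map \eqref{eq:lambda} and its smooth analogue, an isomorphism because $\Sigma_3$ acts freely on the principal stratum) produces a connection $\nabla$ that is metric and torsion-free for $G$, hence the Levi-Civita connection.

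Second, to obtain the displayed matrices $\Gamma_1,\Gamma_2$ I would compute $\nabla^g$ using the classical conformal-rescaling formula $\nabla^g_XY=\nabla^{g_0}_XY+\tfrac12 X(\log f)\,Y+\tfrac12 Y(\log f)\,X-\tfrac12 g_0(X,Y)\operatorname{grad}_{g_0}(\log f)$, in which $\nabla^{g_0}$ is flat, and then evaluate $\nabla_{\zeta_i}\zeta_j$ via the Leibniz rule after writing $\zeta_1,\zeta_2$ in flat coordinates on $V$; collecting terms yields $\Gamma_1,\Gamma_2$, and the first derivatives of $\log f$ are exactly what survives. The cleanest way to present the verification, however, is the direct one: insert the matrices from the statement into \eqref{eq:Koszul}, using the Gram matrix of Proposition \ref{prop:S3metric}, the commutation relation $[\zeta_1,\zeta_2]=\zeta_2$ recorded above, and the simplified Koszul identities \eqref{eq:Koszulsimple} for the repeated-index cases; by the uniqueness noted above these are then the Christoffel symbols of $\nabla$. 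The curvature endomorphism follows by substituting $\Gamma_1,\Gamma_2$ into $\mathcal{R}(\zeta_1,\zeta_2)=[\nabla_{\zeta_1},\nabla_{\zeta_2}]-\nabla_{[\zeta_1,\zeta_2]}=[\nabla_{\zeta_1},\nabla_{\zeta_2}]-\nabla_{\zeta_2}$ and simplifying; the two second-order combinations $\chi$ and $\lambda$ are what the second derivatives of $\log f$ organize into. Equivalently $\mathcal{R}=\Phi\circ\mathcal{R}^g\circ\Phi^{-1}$, with $\mathcal{R}^g$ the curvature of the conformal metric $f\,g_0$ on $\mathbb{R}^2$, governed by the Gauss-curvature formula for a conformal change, whose pushforward to $\mathcal{X}$ reassembles into the stated matrix.

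Finally, if $f$ is constant then $\log f$ is constant, so $\chi=\lambda=0$ and the curvature matrix vanishes; conceptually, $g=f\,g_0$ is then flat, $\nabla^g$ is the flat connection on $\mathbb{R}^2$, and since flatness is preserved by the Lie-Rinehart isomorphism $\Phi$ the connection $\nabla$ is flat. I expect the one genuinely delicate point to be the removability of the poles along $\{\Delta=0\}$: inverting $G$ over the field of rational functions $\mathbb{R}(u^2,u^3)$ solves \eqref{eq:Koszul} but produces Christoffel symbols with $\Delta$ in the denominator, and one must argue that these actually lie in $\mathcal{C}^\infty(\mathcal{X})$. This is settled by the symmetry-reduction construction, which manufactures $\nabla$ intrinsically on the module $\mathfrak{X}^\infty(\mathcal{X})$; a posteriori it is also visible directly, since no factor of $\Delta$ appears in the formulas of the statement. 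As in the double-cone treatment of Section \ref{sec:LC}, the situation here is in fact better, because the generators $\zeta_1,\zeta_2$ are free, so the linear-algebra ambiguities caused by syzygies do not arise.
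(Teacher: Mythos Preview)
Your proposal is correct and takes a somewhat different, more conceptual route than the paper. The paper's proof is a direct computation: it tabulates $\zeta_i(G_{jk})$, uses the simplified Koszul identities \eqref{eq:Koszulsimple} to write down $2G\Gamma_1$ and $2G\Gamma_2$ explicitly, then multiplies by $\tfrac12 G^{-1}=\frac{-1}{(3/2)f^2\Delta}\operatorname{adj}(G)$ and verifies by hand that in every entry the factor $\Delta$ cancels; the curvature is obtained afterwards with Mathematica. No appeal to the isomorphism $\Phi$ of Section~\ref{sec:LC} or to the conformal form of the metric upstairs is made.

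Your argument instead explains the regularity \emph{a priori}: since $V$ is absolutely irreducible, any $\Sigma_3$-invariant metric on $V$ is conformal, $g=f\,g_0$, so the Levi-Civita connection upstairs is given by the standard conformal-change formula and is manifestly regular; pushing it down via the Lie-Rinehart isomorphism (available because $\Sigma_3$ is finite, so the kernel of $\lambda$ is trivial) gives a metric, torsion-free connection for $G$ with smooth Christoffel symbols, and uniqueness does the rest. This buys a conceptual explanation of why the $\Delta$-poles are removable and a link to the Gauss-curvature formula for $\mathcal{R}^g$, whereas the paper's approach exhibits the cancellation of $\Delta$ explicitly without needing to recognize $g$ as conformal. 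One small imprecision: the reason $\lambda$ is an isomorphism is not ``freeness on the principal stratum'' per se, but that for a finite group no nonzero invariant vector field can annihilate all invariants; your conclusion stands regardless.
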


\begin{proof}
It is useful to record the action of $ \zeta _{1} ,\zeta _{2}$ on the entries of the metric $G$ in a table:
$$\begin{array}{ c|c c c }
 & G_{11} =-u^{2} f & G_{12} =\frac{9}{2} u^{3} f & G_{22} =3u{^{2}}^{2} f\\
\hline
\zeta _{1} =2u^{2}\frac{\partial }{\partial u^{2}} +3u^{3}\frac{\partial }{\partial u^{3}} & -2u^{2} f-u^{2} \zeta _{1}( f) & \frac{27}{2} u^{3} f+\frac{9}{2} u^{3} \zeta _{1}( f) & 12u{^{2}}^{2} f+3u{^{2}}^{2} \zeta _{1}( f)\\
\zeta _{2} =-9u^{3}\frac{\partial }{\partial u^{2}} +2u{^{2}}^{2}\frac{\partial }{\partial u^{3}} \  & 9u^{3} f-u^{2} \zeta _{2}( f) & 9u{^{2}}^{2} f+\frac{9}{2} u^{3} \zeta _{2}( f) & -54u^{2} u^{3} f+3u{^{2}}^{2} \zeta _{2}( f)
\end{array}$$
Using the Equations \eqref{eq:Koszulsimple} we obtain
\begin{align*}
2G\Gamma _{1} &=\begin{bmatrix}
-2u^{2} f-u^{2} \zeta _{1}( f) & 18u^{3} f-u^{2} \zeta _{2}( f)\\
9u^{3} f+9u^{3} \zeta _{1}( f) +u^{2} \zeta _{2}( f) & 12u{^{2}}^{2} f+3u{^{2}}^{2} \zeta _{1}( f)
\end{bmatrix},\\
2G\Gamma _{2} &=\begin{bmatrix}
9u^{3} f-u^{2} \zeta _{2}( f) & 12u{^{2}}^{2} f-3u{^{2}}^{2} \zeta _{1}( f) +9u^{3} \zeta _{2}( f)\\
6u{^{2}}^{2} f+3u{^{2}}^{2} \zeta _{1}( f) & -54u^{2} u^{3} f+3u{^{2}}^{2} \zeta _{2}( f)
\end{bmatrix}.
\end{align*}
To the right hand sides we apply $\frac{G^{-1}}{2} =\frac{-1}{\frac{3}{2} f^{2} \Delta }\begin{bmatrix}
3u{^{2}}^{2} f & -\frac{9}{2} u^{3} f\\
-\frac{9}{2} u^{3} f & -u^{2} f
\end{bmatrix}$ and obtain after a tedious calculation
\begin{align*}
&\frac{-1}{\frac{3}{2} f^{2} \Delta }\begin{bmatrix}
3u{^{2}}^{2} f & -\frac{9}{2} u^{3} f\\
-\frac{9}{2} u^{3} f & -u^{2} f\ 
\end{bmatrix}\begin{bmatrix}
-2u^{2} f-u^{2} \zeta _{1}( f) & 18u^{3} f-u^{2} \zeta _{2}( f)\\
9u^{3} f+9u^{3} \zeta _{1}( f) +u^{2} \zeta _{2}( f) & 12u{^{2}}^{2} f+3u{^{2}}^{2} \zeta _{1}( f)
\end{bmatrix}\\
&=\frac{-1}{\frac{3}{2} f^{2} \Delta }\begin{bmatrix}
-\frac{3\Delta }{2} f^{2} -\frac{3\Delta }{2} f\zeta _{1}( f) +\frac{3\Delta }{2} u^{2}\frac{\partial (\log f)}{\partial u^{2}} & -\frac{3\Delta }{2} u{^{2}}^{2} f\frac{\partial f}{\partial u^{3}}\\
-\frac{3\Delta }{2}\frac{1}{3} u^{2} f\frac{\partial f}{\partial u^{3}} & -3\Delta f^{2} -\frac{3\Delta }{2} u^{2} f\frac{\partial f}{\partial u^{2}}
\end{bmatrix}\\
&=\begin{bmatrix}
1+\zeta _{1}(\log f) -u^{2}\frac{\partial (\log f)}{\partial u^{2}} & u{^{2}}^{2}\frac{\partial (\log f)}{\partial u^{3}}\\
\frac{1}{3} u^{2}\frac{\partial (\log f)}{\partial u^{3}} & 2 +u^{2}\frac{\partial (\log f)}{\partial u^{2}}
\end{bmatrix}
\end{align*}
and
\begin{align*}
&\frac{-1}{\frac{3}{2} f^{2} \Delta }\begin{bmatrix}
3u{^{2}}^{2} f & -\frac{9}{2} u^{3} f\\
-\frac{9}{2} u^{3} f & -u^{2} f\ 
\end{bmatrix}\begin{bmatrix}
9u^{3} f-u^{2} \zeta _{2}( f) & 12u{^{2}}^{2} f-3u{^{2}}^{2} \zeta _{1}( f) +9u^{3} \zeta _{2}( f)\\
6u{^{2}}^{2} f+3u{^{2}}^{2} \zeta _{1}( f) & -54u^{2} u^{3} f+3u{^{2}}^{2} \zeta _{2}( f)
\end{bmatrix}\\
&=\frac{-1}{\frac{3}{2} f^{2} \Delta }\begin{bmatrix}
-\frac{3\Delta }{2} u{^{2}}^{2} f\frac{\partial f}{\partial u^{3}} & -\frac{3\Delta }{2} u^{2}\left( -6+3fu^2\frac{\partial f}{\partial u^{2}}\right)\\
-\frac{3\Delta }{2} f^{2} -\frac{3\Delta }{2} u^{2} f\frac{\partial f}{\partial u^{2}} & -\frac{3\Delta }{2}\left(9u^{3} f\frac{\partial f}{\partial u^{2}} - u{^{2}}^{2} f\frac{\partial f}{\partial u^{3}}\right)
\end{bmatrix}\\
&=\begin{bmatrix}
u{^{2}}^{2}\frac{\partial (\log f)}{\partial u^{3}} & 3u^{2}\left( -2+u^2\frac{\partial (\log f)}{\partial u^{2}}\right)\\
1+u^{2}\frac{\partial (\log f)}{\partial u^{2}} & -9u^{3}\frac{\partial (\log f)}{\partial u^{2}} + u{^{2}}^{2}\frac{\partial (\log f)}{\partial u^{3}}
\end{bmatrix} .
\end{align*}
What is remarkable, of course, is that the discriminant $\Delta$ cancels out. For the evaluation of $\mathcal{R}$ we used mathematica \cite{Mathematica}.
\end{proof}

We mention that for $f=1$ a Saito type metric $\eta:=\partial G/\partial u^3$ can be deduced from the metric $G$ of Proposition \ref{prop:S3metric}:
$$\eta=\begin{bmatrix}
    0&9/2\\
    9/2&0
\end{bmatrix}.$$
The metrics $G$ and $\eta$ form a \emph{flat pencil}, a structure that is important in Dubrovin's construction \cite{Dubrovin}. 

\section{Outlook}\label{sec:outlook}

In view of \cite{Michor} we expect that regular Levi-Civita connections exist for orbit spaces (seen as differential spaces in the sense of Sikorski) of compact groups as well as for categorical quotients of reductive groups. The main obstacle seems to be that the epimorphism $\lambda$ of Equation \eqref{eq:lambda} (or $\mathcal{C}^\infty$-versions thereof \cite{liftingHomo}) has a nontrivial kernel when the group is non-discrete. It consists of all the derivations that annihilate all the invariants. This includes the derivations tangent to the orbits.
At the moment we cannot offer an argument why this kernel should be preserved by the Levi-Civita connection. We also expect that Sjamaar-Lerman symplectic quotients \cite{SL,HSScompositio} of unitary representations possess canonical stratified Kähler structures (in the sense of \cite{HO}) whose associated metrics admit regular Levi-Civita connections.

In \cite{HOS} we studied symplectic forms on singular spaces (i.e., singular algebraic varieties or differential spaces). On the formal level, it is straight forward to extend the theory of symplectic connections \cite{Hess} to the singular case. However, when attempting to solve the symplectic analogue of the Koszul equations for the double cone it turns out to be necessary to localize along the defining equation. For this reason, we refrain from presenting the details here.

It should be said that when $\nabla$ is a Levi-Civita connection for $(\operatorname{Der}(A),A)$ and $A$ is a singular noetherian $\boldsymbol{k}$-algebra then, even though we have a curvature endomorphism,
we cannot make sense of Ricci curvature or scalar curvature. This is because in this situation $\operatorname{Der}(A)$ is not projective and therefore one cannot take traces. For this reason, we suggest when $V$ is merely a coherent $A$-module to search for $A$-linear maps $\operatorname{End}_A(V)\to A$ that are invariant under conjugation with elements of $\operatorname{Aut}_A(V)$ since they may substitute the trace. We hope that homological algebra can provide appropriate answers. Of course this is also relevant when one tries to find characteristic classes for non-projective $A$-modules $V$. If the Levi-Civita connection arises by restriction from a projective module to a non-projective submodule (as in Section \ref{sec:LC}) one has of course the option to restrict Ricci and scalar curvatures to this submodule. However, it is unclear if these quantities have an intrinsic meaning.

What is meant in physics by a space-time singularity is that the metric has poles while there is a regular local coordinate system for the space-time. In the examples considered in Section \ref{sec:LC} the situation is opposite. It seems to us an interesting question if the two notions can be related to each other via a resolution of singularities. In other words, does there exist a (possibly singular) metric on a crepant resolution of a singularity of a semi-Riemannian orbifold such that the respective Levi-Civita connections are mapped into each other when restricted to the principal stratum? Note, however, that the theory of resolutions of singularities has not yet been elaborated for semialgebraic sets, seen as differential spaces.

Let us indicate further directions in which our investigations may be continued.
Inspecting the differential geometry literature, there might be of course other types of characteristic classes that turn out to be  generalizable to the singular case. Moreover, if the base algebra $A$ permits to solve ordinary differential equations (e.g., in the case of differential spaces or in a holomorphic setup) also the question of stratified holonomy respresentations may be worth studying.
In mathematical newspeak the subject of our paper may be referred to as \emph{prederived geometry}\footnote{It is, of course, just geometry.}.
It seems to be an interesting question if it is possible to `derive' it. In other words, one might be able to lift an $L$-connection on $V$ to a homotopy connection on a free resolution of $V$ for the $L_\infty$-algebroid on a free resolution of $L$ that was constructed in \cite{LGR,HO}.

\section{Conflicts of interest}
On behalf of all authors, the corresponding author states that there is no conflict of interest. 
 
\bibliographystyle{amsplain}
\bibliography{explorations.bib}
\end{document}